\newcommand{\N}{\mathbb N}
\newcommand{\cB}{\mathcal B}
\newcommand{\be}{{\mathbf e}}
\newcommand{\bs}{{\mathbf s}}
\newcommand{\bz}{{\mathbf 0}}
\newcommand{\bx}{{\mathbf x}}
\newcommand{\by}{{\mathbf y}}
\newcommand {\SC} {{\mathbb C}}
\newcommand {\SD} {{\mathbb D}}
\newcommand {\SK} {{\mathbb K}}
\newcommand {\SN} {{\mathbb N}}
\newcommand {\SR} {{\mathbb R}}
\newcommand {\ST} {{\mathbb T}}
\newcommand {\SW} {{\mathbb W}}
\newcommand {\SX} {{\mathbb X}}
\newcommand {\Wd} {{\mathbb{W}_{\rm d}}}
\newcommand {\Wc} {{\mathbb{W}_{\rm co}}}
\newcommand {\Wqc} {{\mathbb{W}_{\rm qc}}}
\newcommand {\al} {{\alpha}}
\newcommand {\dt} {{\delta}}
\newcommand {\Dt} {{\Delta}}
\newcommand {\e} {{\varepsilon}}
\newcommand {\ga} {{\gamma}}
\newcommand {\la} {{\lambda}}
\newcommand {\om} {{\omega}}
\newcommand{\cL}{\mathcal L}
\newcommand{\cY}{\Upsilon} 
\newcommand {\hg} {{\hat g}}
\newcommand{\ba}{{\mathbf a}}
\newcommand{\Been}{{\{\be_n, \be_n^*\}_{n=1}^\infty}}
\newcommand{\LN}{{\mathbf L}_N}
\newcommand{\tLN}{{\mathbf{\widetilde{L}}}_N}
\newcommand{\bDs}{{\mbox{{\small$({\mathbf D}^*)$}}}}
\newcommand{\bdt}{{\boldsymbol\dt}}
\newcommand{\bfe}{{\boldsymbol\e}}
\newcommand{\heta}{{\widehat\eta}}
\newcommand{\teta}{{\widetilde\eta}}
\newcommand{\coeta}{{\eta^\sharp}}
\newcommand{\OT}{{\overline{T}}}
\newcommand{\bKN}{K^{\rm u}_N}
\newcommand{\ttM}{{\mathtt{M}}}
\newcommand{\ld}{{\underline{d}}}
\renewcommand{\phi}{{\varphi}}
\newcommand {\lheta} {{\ell_{\heta}^1}}
\def\supp{\mathop{\rm supp}}
\def\sign{\mathop{\rm sign}}
\numberwithin{equation}{section}
\newtheorem{theorem}{Theorem}[section]
\newtheorem{lemma}[theorem]{Lemma}
\newtheorem{corollary}[theorem]{Corollary}
\newtheorem{Remark}[theorem]{Remark}
\newtheorem{proposition}[theorem]{Proposition}
\newtheorem{definition}[theorem]{Definition}
\newtheorem{example}[theorem]{Example}
\newcommand{\Ba}[1]{\begin{array}{#1}}
\newcommand{\Ea}{\end{array}}
\newcommand{\Be}{\begin{equation}}
\newcommand{\Ee}{\end{equation}}
\newcommand{\Bea}{\begin{eqnarray}}
\newcommand{\Eea}{\end{eqnarray}}
\newcommand{\Beas}{\begin{eqnarray*}}
\newcommand{\Eeas}{\end{eqnarray*}}
\newcommand{\Benu}{\begin{enumerate}}
\newcommand{\Eenu}{\end{enumerate}}
\newcommand{\Bi}{\begin{itemize}}
\newcommand{\Ei}{\end{itemize}}
\newcommand{\BR}{\begin{Remark} \em}
\newcommand{\ER}{\end{Remark}}
\newcommand{\BE}{\begin{example} \em}
\newcommand{\EE}{\end{example}}
\newcommand {\Ds} {\displaystyle}
\newcommand {\Ts} {\textstyle}
\newcommand {\mand} {{\quad\mbox{and}\quad}}
\renewcommand {\mid} {{\,\,\,\colon\,\,\,}}
\newcommand{\bline}{{\bigskip

\noindent}}
\newcommand{\sline}{{\smallskip

\noindent}}
\newcommand{\bp}{{b_p }}
\newcommand{\lpr}{{{\ell}^{p,r}}}
\newcommand{\tri}[1]{|\!|\!| #1 |\!|\!|}
\newcommand {\bone} {{\bf 1}}
\newcommand {\Cspan}[1] {\overline{\mbox{\rm span}}\,\{#1\}}
\newcounter{reg}
\newcounter{regTO}
\newcommand {\Proof} {\noindent{\bf P{\footnotesize\bf ROOF}: } \ }
\newcommand {\Proofof}[1] {\noindent{\bf P{\footnotesize\bf ROOF} of {#1}: } \ }
\newcommand {\ProofEnd} {
             \begin{flushright} \vskip -0.2in $\Box$ \end{flushright}}
\newcommand{\vertiii}[1]{{\left\vert\kern-0.25ex\left\vert\kern-0.25ex\left\vert #1
		\right\vert\kern-0.25ex\right\vert\kern-0.25ex\right\vert}}
\begin{document}

\title[Embeddings and Lebesgue inequalities for greedy algorithms]{Embeddings and Lebesgue-type
       inequalities for the greedy algorithm in Banach spaces}

\author{P.M. Bern\'a}

\address{Pablo M. Bern\'a
\\
Departamento de Matem\'aticas \\
Universidad de Murcia\\
30100 Murcia, Spain} \email{pmanuel.berna@um.es}

\author{O. Blasco}

\address{Oscar Blasco
	\\
	Department of Analysis Mathematics \\ Universidad de Valencia, Campus de
	Burjassot\\ Valencia, 46100, Spain}
	 \email{oscar.blasco@uv.es}

\author{G. Garrig\'os}
\address{Gustavo Garrig\'os
\\
Departamento de Matem\'aticas \\
Universidad de Murcia\\
30100 Murcia, Spain} \email{gustavo.garrigos@um.es}

\author{E. Hern\'andez}

\address{Eugenio Hern\'andez
\\
Departamento de Matem\'aticas\\
Universidad Aut\'onoma de Madrid\\
28049, Madrid, Spain} \email{eugenio.hernandez@uam.es}

\author{T. Oikhberg}

\address{Timur Oikhberg\\
Department of Mathematics\\
University of Illinois\\
Urbana, IL, USA} \email{oikhberg@illinois.edu}

\begin{abstract}
We obtain Lebesgue-type inequalities for the greedy algorithm for arbitrary
complete seminormalized biorthogonal systems
in Banach spaces. The bounds are given only in terms of the upper democracy functions of the basis
and its dual. We also show that these estimates are equivalent to embeddings between the given
Banach space and certain discrete weighted Lorentz spaces. Finally, the asymptotic optimality of
these inequalities is illustrated in various examples of non necessarily quasi-greedy bases.
\end{abstract}


\date{\today}
\subjclass[2010]{41A65, 41A46, 41A17, 46B15, 46B45.}

\keywords{Non-linear approximation, Lebesgue-type inequality, greedy algorithm, quasi-greedy basis,
 biorthogonal system, discrete Lorentz space. }

\maketitle

\section{Introduction and main results}\label{secIntroduc}

Throughout the paper $(\SX,\|\cdot\|)$ is a separable infinite dimensional Banach space over a field
$\mathbb K = \SR$ or $\SC$,  $(\SX^*,\|\cdot\|_*)$ is its dual space, and $\{\be_n, \be_n^*\}_{n=1}^\infty$
 a seminormalized complete biorthogonal system in $\SX$.
To every $x\in\SX$ we associate a formal series $x \sim \sum_{n=1}^\infty \be_n^*(x) \be_n$, so that $\lim_n \be_n^*(x)=0$. It is well-known that greedy algorithms can be considered in this generality \cite{Wo}, which includes in particular the cases when the system $\mathcal B = \{\be_n \}_{n=1}^\infty$ is a Schauder or a Markushevich  basis.

Given $x\in\SX$, the error of $N$-term approximation with respect to $\mathcal B$ is denoted by
$$
  \sigma_N(x) 
	:=
  \inf \Big\{ \big\|x-\sum_{n\in A}c_n\be_n\big\|\mid c_n\in\SK,\;|A|\leq N \Big\}, \quad N=1,2,3, \dots
$$
and the error of the \emph{expansional} $N$-term approximation by
$$
\widetilde \sigma_N(x) := 
\inf \Big\{ \big\|x-\sum_{n\in A}\be^*_n(x)\be_n\big\|\mid |A|\leq N\Big\}, \quad N=1,2,3, \dots\,
$$
A \textbf{greedy set} for $x\in \SX$ of order $N$, written $A\in \mathcal G(x,N)$, is a set of indices $A\subset\SN$ such that $|A|=N$ and
$$
\min_{n\in A} |\be_n^*(x)| \geq \max_{n\notin A} |\be_n^*(x)|\,.
$$
A {\bf greedy operator} of order $N$ is any mapping $G_N:\SX\to\SX$ such that
$$
x\in\SX
\longmapsto  G_Nx =
\sum_{n\in A_x} \be_n^*(x) \be_{n}\,,
$$
with  $A_x \in \mathcal G(x,N)$.  We write $\mathcal G_N$ for the set of all greedy operators of order $N$.

To quantify the performance of greedy operators as $N$-term approximations, one considers, for every $N=1,2,3, \dots$, the smallest numbers $\LN =\LN (\cB,\SX)$ and  $\tLN=\tLN(\cB,\SX)$ such that
\begin{eqnarray} \label{LN}
\Vert x-G_Nx\Vert \,\leq \,\LN \, \sigma_N(x),\quad \forall\;x\in\SX,\quad\forall\;\ G_N\in 
\mathcal{G}_N
\end{eqnarray}
and
\begin{eqnarray} \label{LNE}
\Vert x-G_N(x)\Vert\, \leq\, \tLN\,\tilde{\sigma}_N(x),\; \forall\; x\in\SX, \quad\forall\;G_N\in \mathcal{G}_N.
\end{eqnarray}
As in \cite[Chapter 2]{Tem2015}, we call \eqref{LN} a {\bf Lebesgue-type inequality} for the greedy algorithm, and $\LN $ its associated Lebesgue-type constant.

The question of the performance of $\|x-G_Nx\|$ compared
to $\sigma_N(x)$ was raised by V. N. Temlyakov in the 90s; see \cite{T2,Tem2015} for historical background. Lebesgue-type inequalities were first proved for the trigonometric
and the Haar systems in $L^p$ spaces \cite{Tem98trig,Tem98haar,Tem98mhaar,Wo,Osw2001}. Also, a celebrated result in \cite{KT} established that $\LN =O(1)$ if and only if the system $\cB$ is democratic and unconditional in $\SX$ (also called a \emph{greedy basis}). Nowdays, Lebesgue-type inequalities are reasonably well-understood in the larger class of quasi-greedy bases; see e.g. \cite{TYY2011b,DST,GHO2013,DKO2015}.

For general bases, however, it is a challenging problem to find bounds for $\LN$ which are both, asymptotically optimal and described in terms of reasonable quantities (such as the unconditionality and democracy parameters).
A first approach to this problem was recently given in \cite{BBG2016}. Here we present a different approach, which only depends on the democracy functions of $\cB=\{\be_n\}_{n=1}^\infty$ and $\cB^*=\{\be^*_n\}_{n=1}^\infty$, and allows to cover some cases not considered in \cite{BBG2016}.

\

To describe our results we shall use the following notation. We write $\cY$ for the collection of all $\bfe = \{\e_j\}_{j=1}^\infty\subset\SK$ with $|\varepsilon_j|=1$. For finite sets $A\subset \SN$ we let
$$
{\bf 1}_{\bfe A} := \sum_{j\in A} \varepsilon_j \be_j\,\;\mand  {\bf 1}_{\bfe A}^* := \sum_{j\in A} \varepsilon_j \be_j^*,\quad \bfe\in\cY.
$$
If $\bfe\equiv1$ we just write $\bone_A$ and $\bone^*_A$. We define the \emph{upper (super)-democracy parameters} associated with $\cB$ and $\cB^*$, respectively, by
\Be
D(N): = \sup_{{|A|= N}\atop{\bfe\in\cY}} \|\bone_{\bfe A}\|\,\mand
D^*(N): = \sup_{{|A|= N}\atop{\bfe\in\cY}} \|\bone^*_{\bfe A}\|_*\,.
\label{defDN}
\Ee
For each finite set $A\subset\SN$, we denote by $P_A$ the projection operator \[
P_A(x)= \sum_{n\in A} \be_n^*(x)\be_n,\quad x\in\SX,\] and define the \emph{conditionality constants}
\Be \label{conditional}
K_N = K_N(\mathcal B, \SX):= \sup \big\{\|P_A\| \mid |A| \leq N\big\}\,, \ N=1,2,3, \dots
\Ee
Note that $\cB$ is unconditional if and only if $K_N = O(1)$.
In general, for every given quantity $A_N=A_N(\cB,\SX)$ (such as $K_N,\LN,\tLN$, ...),
we define
\[
A_N^*:=\,A_N(\cB^*,\widehat\SX),
\] where $\widehat \SX := \Cspan{\be_n^*}_{n=1}^\infty$ is considered as a closed subspace of $\SX^*$. In particular, notice that $K_N^* \leq K_N$.

\

To every pair of positive non-decreasing sequences $\{\eta_1(j)\}_{j=1}^\infty$ and $\{\eta_2(j)\}_{j=1}^\infty$, we associate the following numbers
\Bea
S_N (\eta_1, \eta_2) & := & \sum_{j=1}^N \Delta \eta_1(j) \Delta \eta_2(j),\label{SN}\\ \label{TN}
  T_N(\eta_1, \eta_2) & := & \sum_{j=1}^N \frac{ \eta_1(j)}{j} \Delta \eta_2(j),\\
\OT_N(\eta_1,\eta_2) & := & \min\big\{T_N(\eta_1,\eta_2),T_N(\eta_2,\eta_1)\big\}.\label{OTN}\Eea
Here, $\Dt\eta (j) = \eta(j) - \eta(j-1)$, $j=1,2,\dots $ (with the agreement that $\eta(0)=0$). Our main result can then be stated as follows.

\begin{theorem}\label{Th3}
	Let $\{\be_n, \be_n^*\}_{n=1}^\infty$ be seminormalized, complete and biorthogonal in $\SX$.
Let $\OT_N=\OT_N(D,D^*)$ as above. Then the following hold
\Be
  K_N \leq \OT_N\,, \qquad  \LN,\LN^*   \,\leq 1 +3\OT_N\,, \mand
  \tLN,\tLN^* \;\leq 1 + 2\OT_N\,.\label{Th1a}
\Ee
  If, additionally, $D$ (resp. $D^*$) is concave, then $S_N=S_N(D,D^*)\leq\OT_N$ and
	\Be
	 K_N \leq S_N\,, \quad  \LN   \,\leq 1 +3S_N\,, \quad
  \tLN \;\leq 1 + 2S_N\,,\label{Th1b}
\Ee
(respectively, for $K^*_N,\LN^*,\tLN^*$). Finally, these estimates are best possible, in the sense that there exist $\SX$ and $\{\be_n,\be^*_n\}$ for which all the equalities hold.
\end{theorem}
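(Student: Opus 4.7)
Given $x\in\SX$ and $A\subset\SN$ with $|A|\leq N$, I would first reorder the indices so that $\alpha_j=|\be_{n_j}^*(x)|$ is non-increasing and write $P_Ax=\sum_j\alpha_j\e_j\be_{n_j}$ with suitable signs. Abel summation with $\|{\bf 1}_{\bfe B_j}\|\leq D(j)$, where $B_j=\{n_1,\dots,n_j\}$, followed by re-Abel, yields $\|P_Ax\|\leq\sum_j\alpha_j\Delta D(j)$. The dual-democracy bound $j\alpha_j\leq\sum_{k\leq j}\alpha_k={\bf 1}_{\bfe B_j}^*(x)\leq D^*(j)\|x\|$ then gives $K_N\leq T_N(D^*,D)$. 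A symmetric argument on the dual side (fix $y^*$ in the unit ball of $\SX^*$, order by $|y^*(\be_n)|$, Abel, and bound partial sums of $\be_n^*$ by $D^*$) yields $K_N\leq T_N(D,D^*)$, whence $K_N\leq\OT_N$. The starred bounds follow by applying the same reasoning to $(\cB^*,\widehat\SX)$.

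\textbf{Lebesgue estimates.} For $\tLN$ I would fix $x$, a greedy set $A\in\cG(x,N)$, and $B$ with $|B|\leq N$ nearly realizing $\widetilde\sigma_N(x)=\|x-P_Bx\|$, and use the decomposition
\begin{equation*}
x-G_Nx=(x-P_Bx)+P_{B\setminus A}x-P_{A\setminus B}x.
\end{equation*}
The identity $P_{A\setminus B}x=P_{A\setminus B}(x-P_Bx)$ (from $(A\setminus B)\cap B=\emptyset$) bounds the third summand by $K_N\widetilde\sigma_N(x)\leq\OT_N\widetilde\sigma_N(x)$. For the middle term, the greedy property forces $|\be_m^*(x)|\leq\alpha:=\min_{n\in A\setminus B}|\be_n^*(x)|$ whenever $m\in B\setminus A$, and dual democracy applied to $x-P_Bx$---valid because $\be_n^*(x)=\be_n^*(x-P_Bx)$ for $n\in A\setminus B$---gives $\alpha\leq D^*(j)/j\cdot\widetilde\sigma_N(x)$ for \emph{every} $j\leq r:=|A\setminus B|$. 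Substituting this per-$j$ bound (rather than just the crude case $j=r$) into the Abel expansion of $P_{B\setminus A}x$ yields $\|P_{B\setminus A}x\|\leq T_N(D^*,D)\widetilde\sigma_N(x)$, and the dual treatment gives the companion bound $T_N(D,D^*)\widetilde\sigma_N(x)$, hence $\leq\OT_N\widetilde\sigma_N(x)$. Summing gives $\tLN\leq 1+2\OT_N$. The proof of $\LN\leq 1+3\OT_N$ is identical with a nearly optimal $z=\sum_{n\in B}c_n\be_n$ for $\sigma_N(x)$ replacing $P_Bx$; the decomposition then contains an extra term $P_B(x-z)$ contributing $K_N\|x-z\|$, accounting for the third $\OT_N$.

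\textbf{Concave case.} When $D$ is concave, $\Delta D(j)$ is non-increasing, and a monotone-rearrangement inequality applied to $\sum_j\alpha_j\Delta D(j)$---using the partial-sum bound $\sum_{k\leq j}\alpha_k\leq D^*(j)\|x\|=\sum_{k\leq j}\Delta D^*(k)\|x\|$---upgrades the Abel estimate to $\|P_Ax\|\leq\|x\|\sum_j\Delta D(j)\Delta D^*(j)=\|x\|S_N$, so $K_N\leq S_N$. The inequality $\Delta D(j)\leq D(j)/j$ from concavity also gives $S_N\leq T_N(D,D^*)$; combined with the analogous bound on the dual side this yields $S_N\leq\OT_N$. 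The Lebesgue-type bounds $\LN\leq 1+3S_N$ and $\tLN\leq 1+2S_N$ follow by redoing the preceding decompositions with $S_N$ in place of $\OT_N$ at the step bounding $\|P_{B\setminus A}x\|$; the starred versions hold under concavity of $D^*$.

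The hardest part will be the optimality claim. To show the estimates are asymptotically sharp I would look for specific constructions---likely variants of the weighted discrete Lorentz or conditional Schreier-type bases alluded to in the abstract---where $D$ and $D^*$ can be prescribed (subject to the mandatory lower bound $D(j)D^*(j)\geq j$) and where $K_N,\LN,\tLN$ can be computed and seen to match $\OT_N$ (respectively $S_N$) up to constants. The main creative difficulty is to produce non-quasi-greedy bases that simultaneously saturate all three Lebesgue-type quantities, since ``obvious'' constructions tend to make one quantity small at the cost of another.
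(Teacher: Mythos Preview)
Your argument for the inequalities is essentially the paper's, though with one pleasant variation worth noting. The paper obtains $K_N\leq T_N(D^*,D)$ exactly as you do (Abel on the coefficient side, then $j\alpha_j\leq D^*(j)\|x\|$), but for the companion bound $K_N\leq T_N(D,D^*)$ it does \emph{not} dualize: instead it observes that $D\leq\widetilde D$ with $\widetilde D$ concave, applies the concave case with $\eta_1=\widetilde D$, and reads off $S_N(\widetilde D,D^*)=T_N(D,D^*)$. Your duality route (fix $y^*$, order by $|y^*(\be_n)|$, Abel with partial sums $\bone^*_{\bfe B_j}(x)\leq D^*(j)\|x\|$, then bound $\beta_j\leq D(j)/j$) is a clean alternative and avoids introducing $\widetilde D$. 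For $\tLN$ and $\LN$, your three- and four-term decompositions coincide with the paper's after unfolding $P_{(B\cup\Gamma)^c}x=(x-P_Bx)-P_{\Gamma\setminus B}(x-P_Bx)$. The bound on $\|P_{B\setminus A}x\|$ is likewise obtained the same way; the paper phrases it as passing to $A_{|\Gamma\setminus B|}(x-z)$ and re-running the $K_N$ argument, which is what your ``dual treatment'' amounts to.

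The genuine gap is the optimality. The paper does not use weighted Lorentz or Schreier constructions here; it takes the \emph{difference basis} $\bx_n=\be_n-\be_{n-1}$ in $\ell^1$ (dual system the summing basis). One computes $D(N)=2N$, $D^*(N)=N$, hence $S_N=\OT_N=2N$, and then exhibits explicit vectors showing $K_N=2N$, $\tLN=1+4N$, $\LN=1+6N$ \emph{exactly} (and the same for the starred quantities). So the sharpness is not merely ``up to constants'' but literal equality throughout Theorem~\ref{Th3}, and comes from a single elementary example rather than a family of tailored bases.
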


We add a few comments related with Theorem \ref{Th3}. First, the novelty concerns mainly the class of not quasi-greedy and not democratic bases. Indeed, in many such instances we actually obtain $\LN\approx \OT_N$, and in general
we always have $\OT_N\lesssim N$, which was not always the case in \cite{BBG2016}. See $\S\ref{Examples}$ below for various examples, including the trigonometric system in $L^p$.

Secondly, in some special cases, such as for quasi-greedy and democratic $\cB$,
we shall see that $\OT_N\lesssim \ln (N+1)$. This bound does not recover $\tLN\approx1$, but it is best possible for $\LN$, $\LN^*$ and $\tLN^*$, which may all grow to the order $\ln(N+1)$; see e.g. $\S\ref{Example2}$ below.
Another instance occurs when $\{\be_n,\be^*_n\}$ is {\bf bidemocratic} (as in \cite{DKKT}), that is
\Be D(N)D^*(N)\leq c\,N,\quad N=1,2,\ldots\label{bidem}\Ee Then $\OT_N(D,D^*)\leq c\,\ln(N+1)$, and again there exist examples with
$\tLN\approx\tLN^*\approx1$  and $\LN\approx\LN^*\approx\ln (N+1)$; see e.g. the new spaces $KT(p,\infty)$ in $\S\ref{Example5}$ below.

	\
	%
	%

\

To prove Theorem \ref{Th3}, we need to translate the information on $D$ and $D^*$ as embeddings between $\SX$ and a certain family of \emph{discrete weighted Lorentz spaces}. Let $\{s_j^*\}_{j=1}^\infty$ denote the non-increasing rearrangement of a sequence $\{s_n\}_{n=1}^\infty\in c_0$.
Given a non-negative weight $\eta=\{\eta(j)\}_{j=1}^\infty$  we set 
\Be
 \ell^1_\eta=\Big\{\bs\in c_0\mid \|{\bf s}\|_{\ell^1_\eta}:= \sum_{j=1}^\infty \,s_j^*\; \frac{\eta(j)}{j} \, < \infty \,\Big\}.
\label{l1eta}\Ee
We write $\SW$ for the class of all positive increasing weights,
and define, for each $\eta\in\SW$, a new weight
\[\widehat \eta(j)=j \Delta\eta(j),\quad j=1,2,\ldots\]
Below we shall mainly work with the space $\;\ell_{\heta}^1\;$ of all ${\bf s}\in c_0$ with
\Be
 \|{\bf s}\|_{\ell^1_{\heta}}:= \sum_{j=1}^\infty \,s_j^* \,\Delta\eta(j) \, <\, \infty \,.
\label{l1heta}\Ee
 Notice that  $\ell^1_{\eta}$ and $\ell^1_\heta$  are also denoted $d(w,1)$ for  $w_j={\eta(j)}/{j}$ and $w_j=\Delta \eta(j)$, respectively; see e.g. \cite[p. 175]{LZ} or \cite[Example 2.2.3(iv)]{CRS2007}. It is known that  for doubling weights $\eta\in\SW$, both $\ell^1_\eta$ and $\ell^1_\heta$ are quasi-normed spaces; moreover $\ell^1_\eta\subset \ell^1_\heta$, and $\ell_\eta^1 = \ell^1_{\widehat\eta}$ whenever the lower dilation index $i_{\eta}>0$. We shall make a minimum use of these properties in the sequel, but we discuss some of them in $\S\ref{spaces}$ below.

At the other extreme we define the \emph{discrete weighted Marcinkiewicz space} as

\Be \label{marcinkiewicz}
\textit{m}(\eta) =\left\{{\bf s}\in c_0\mid \|{\bf s}\|_{\textit{m}(\eta)}:= \Ts\sup_{k\in \SN}\frac{\eta(k)}k\sum_{j=1}^k s_j^* < \infty  \right\}\,.
\Ee
This is a normed space for every positive $\eta$. We remark that, when $\eta'=\{{j}/{\eta(j)}\}_{j=1}^\infty$, then $\lheta$ and $m(\eta')$ satisfy a duality relation; see \eqref{dualab} below.

\

Finally, we say that a sequence space $\mathbb S$ \textit{embeds into $\SX$ via $\mathcal B$} (with norm $c$), denoted $\mathbb S \stackrel{\cB,\;c}\hookrightarrow \SX\,$, if for every $\textbf{s}=\{s_n\}_{n=1}^\infty\in\mathbb{S}$, there exists a \textbf{unique} $x\in \SX$ such that $\be_n^*(x)=s_n$
and it holds:
\Be \label{embed1}
\|x\| \leq\, c\,\|\bs\|_{\mathbb S}\,=\,c\, \|\{\be_j^*(x)\}_{j=1}^\infty\|_{\mathbb S}\,.
\Ee
Similarly, we say that $\SX$ \textit{embeds into $\mathbb S$ via $\mathcal B$} (with norm $c$), denoted $\SX \stackrel{\cB,\;c}\hookrightarrow \mathbb S\,$, if 
\Be \label{embed2}
\|\{ \be_j^*(x)\}_{j=1}^\infty\|_{\mathbb S}\, \leq \,c \,\| x \|\,,\quad x\in\SX.
\Ee

\medskip

Our two main results concerning embeddings can then be stated as follows.

\medskip

\begin{theorem}\label{Th1}
	Let $ \Been$ be seminormalized and biorthogonal in $\SX$, and $\eta\in \SW$. Then, the following are equivalent:
	
	\sline i) $\;\displaystyle \|{\bf 1}_{\bfe A}\| \leq \eta (|A|)$ for all finite $A\subset \SN$ and all $\bfe \in \cY\,.$
	
\sline	ii) $\;\displaystyle \Vert \sum a_n e_n\Vert_{\SX} \leq \Vert \textbf{a}\Vert_{\ell^1_{\widehat\eta}},\; \mbox{for all} \ \textbf{a}=\{a_n\}\in c_{00}$.
	
\sline Moreover, if $\cB^*$ is total, then each of the above is equivalent to
	
\sline	iii) $\;\displaystyle \ell^1_{\widehat\eta} \stackrel{\cB,\,1}\hookrightarrow \SX$.
\end{theorem}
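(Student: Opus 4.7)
The plan is to first establish the equivalence (i)$\Leftrightarrow$(ii), which does not require totality of $\cB^*$, and then (ii)$\Leftrightarrow$(iii) under that extra hypothesis. The central mechanism is a summation by parts against the non-increasing rearrangement of the coefficient sequence, which reproduces precisely the norm $\|\ba\|_{\ell^1_{\widehat\eta}}=\sum_j a_j^*\,\Delta\eta(j)$.

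For (i)$\Rightarrow$(ii), given $\ba=\{a_n\}\in c_{00}$ with support of cardinality $N$, I would choose a bijection $\pi\colon\{1,\dots,N\}\to\supp\ba$ so that $|a_{\pi(j)}|=a_j^*$, and set $\e_j=a_{\pi(j)}/a_j^*$ when $a_j^*>0$ (and $\e_j=1$ otherwise), producing $\bfe\in\cY$ with $a_{\pi(j)}=\e_j a_j^*$. Abel summation (using $a_{N+1}^*=0$) applied to the partial sums $S_j=\sum_{k\leq j}\e_k\be_{\pi(k)}=\bone_{\bfe A_j}$, where $A_j=\pi(\{1,\dots,j\})$, yields
\[
\Big\|\sum_n a_n\be_n\Big\|\,\leq\,\sum_{j=1}^{N}(a_j^*-a_{j+1}^*)\,\|\bone_{\bfe A_j}\|\,\leq\,\sum_{j=1}^N(a_j^*-a_{j+1}^*)\,\eta(j),
\]
where the last step uses hypothesis (i). A second Abel rearrangement, invoking $\eta(0)=0$, converts the right-hand side into $\sum_j a_j^*\,\Delta\eta(j)=\|\ba\|_{\ell^1_{\widehat\eta}}$. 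The converse (ii)$\Rightarrow$(i) is immediate by testing on $a_n=\e_n\chi_A(n)$: the rearrangement is the indicator of $\{1,\dots,|A|\}$, and the right-hand side of (ii) telescopes to $\eta(|A|)$.

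The implication (iii)$\Rightarrow$(ii) is immediate: for $\ba\in c_{00}$ the finite sum $y=\sum a_n\be_n\in\SX$ satisfies $\be_n^*(y)=a_n$ by biorthogonality, so the uniqueness clause in the definition of the embedding forces $y$ to coincide with the element provided by (iii), and the norm bound becomes exactly (ii). The substantive direction is (ii)$\Rightarrow$(iii): given $\bs\in\ell^1_{\widehat\eta}$, I would show that the partial sums $x_N=\sum_{n=1}^{N} s_n\be_n$ form a Cauchy sequence in $\SX$. Applying (ii) to the truncated sequence $\{s_n\chi_{M<n\leq N}\}_n$ gives
\[
\|x_N-x_M\|\,\leq\,\sum_{j\geq1}t_j^{M,N}\,\Delta\eta(j),
\]
where $\{t_j^{M,N}\}_j$ is its non-increasing rearrangement. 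This rearrangement is pointwise dominated by $\{s_j^*\}$ and, because $\bs\in c_0$, tends pointwise in $j$ to $0$ as $M\to\infty$ (uniformly in $N>M$); dominated convergence against the summable weights $\Delta\eta(j)$ then drives the right-hand side to $0$. Setting $x=\lim_N x_N$, continuity of each $\be_k^*$ gives $\be_k^*(x)=s_k$, and passing to the limit in (ii) yields $\|x\|\leq\|\bs\|_{\ell^1_{\widehat\eta}}$; uniqueness of $x$ is precisely where totality of $\cB^*$ enters, since any other candidate $y$ satisfies $\be_n^*(x-y)=0$ for all $n$. I expect the one delicate point to be the dominated-convergence control of the tail rearrangement; the rest is a clean translation between the upper democracy estimate and the discrete Lorentz norm.
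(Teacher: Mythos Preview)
Your proof is correct. For (i)$\Leftrightarrow$(ii) and (iii)$\Rightarrow$(ii) you follow the paper's argument essentially verbatim: Abel summation against the greedy ordering, then a second summation by parts to land on $\sum_j a_j^*\,\Delta\eta(j)$.

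The genuine difference is in (ii)$\Rightarrow$(iii). The paper does not look at the natural partial sums $x_N=\sum_{n\le N}s_n\be_n$ at all; instead it constructs $x$ as the limit of the \emph{greedy-ordered} series $\sum_{j\ge1}(a_j^*-a_{j+1}^*)\,S_j$, where $S_j=\sum_{k\le j}\e_k\be_{\pi(k)}$. That series is shown to converge absolutely because $\|S_j\|\le\eta(j)$ and $\sum_j(a_j^*-a_{j+1}^*)\eta(j)\le\|\ba\|_{\ell^1_{\widehat\eta}}$; the identification $\be_n^*(x)=a_n$ is then done by a telescoping computation. Your route instead shows that the natural-order partial sums are Cauchy, via the domination $t_j^{M,N}\le\min\{s_j^*,\sup_{n>M}|s_n|\}$ and dominated convergence (the dominating function being $s_j^*$, integrable against the weight $\Delta\eta(j)$ since $\bs\in\ell^1_{\widehat\eta}$ --- note the weights $\Delta\eta(j)$ themselves need not be summable, so your phrase ``summable weights'' is a slight slip in wording, though the argument is sound). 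Your approach has the pleasant side effect of proving that the expansion $\sum_n s_n\be_n$ converges in the given index order, something the paper's construction does not directly yield; conversely, the paper's absolutely convergent greedy series avoids any uniformity-in-$N$ bookkeeping and gives the norm bound $\|x\|\le\|\ba\|_{\ell^1_{\widehat\eta}}$ in one line.
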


As noted above, $\;\displaystyle \ell^1_{\widehat\eta}$ is a linear space if and only if
the sequence $\eta$ is doubling.

\begin{theorem}\label{Th2}
	Let $\Been$ be seminormalized biorthogonal and complete in $\SX$, and $\eta$ a positive sequence. Then, the following are equivalent:

	\sline (i) $\;\|{\bf 1}_{\bfe A}^*\|_* \leq  \eta (|A|)$ for all finite $A\subset \SN$ and all $\bfe\in \cY\,.$
	
	\sline (ii) $\;\SX \stackrel{\cB,\,1}\hookrightarrow m(\eta')$, with  $\eta' =\{j/\eta(j)\}_{j=1}^\infty$.
\end{theorem}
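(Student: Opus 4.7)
\medskip

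\textbf{Proof plan for Theorem \ref{Th2}.} The argument is a direct two-way duality computation based on the observation that the Marcinkiewicz norm, with $\eta'(k) = k/\eta(k)$, simplifies to
$$
\|\bs\|_{m(\eta')} \,=\, \sup_{k\in\SN} \frac{1}{\eta(k)} \sum_{j=1}^k s_j^*\,,
$$
so that testing against sums of biorthogonal functionals supported on the indices of the top-$k$ coefficients pairs naturally with (i).

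For (ii) $\Rightarrow$ (i), I would start from the duality formula
$$
\|\bone^*_{\bfe A}\|_* \,=\, \sup_{\|x\|\leq 1}\Bigl|\sum_{j \in A} \e_j \,\be_j^*(x)\Bigr|\,.
$$
For any $\|x\|\leq 1$, setting $s_j = \be_j^*(x)$, the elementary bound
$|\sum_{j \in A} \e_j s_j| \leq \sum_{j \in A} |s_j| \leq \sum_{j=1}^{|A|} s_j^*$ combined with the assumed embedding
$\frac{1}{\eta(|A|)}\sum_{j=1}^{|A|} s_j^* \leq \|\{s_j\}\|_{m(\eta')} \leq \|x\|$
yields $\|\bone^*_{\bfe A}\|_* \leq \eta(|A|)$ at once.

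For (i) $\Rightarrow$ (ii), I would fix $x\in\SX$ and $k\in\SN$. Since $\{\be_j^*(x)\}\in c_0$, the top $k$ absolute values are attained on some finite set $A\subset\SN$ with $|A|=k$, so $\sum_{j\in A} |\be_j^*(x)| = \sum_{j=1}^k s_j^*$ (ties are handled by any admissible choice). Picking signs $\e_j\in\SK$, $|\e_j|=1$, with $\e_j \be_j^*(x) = |\be_j^*(x)|$ for $j\in A$ (arbitrary elsewhere), one then has $\bfe\in\cY$ and
$$
\sum_{j=1}^k s_j^* \,=\, \sum_{j\in A} \e_j\,\be_j^*(x) \,=\, \bone^*_{\bfe A}(x) \,\leq\, \|\bone^*_{\bfe A}\|_* \,\|x\| \,\leq\, \eta(k)\,\|x\|\,.
$$
Dividing by $\eta(k)$ and taking the supremum over $k$ gives $\|\{\be_j^*(x)\}\|_{m(\eta')} \leq \|x\|$, which is (ii) with constant $1$ (in particular showing that $\{\be_j^*(x)\}$ belongs to $m(\eta')$).

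The argument has no substantial obstacle; if anything, the one point requiring a small comment is the handling of ties in the construction of $A$ in the second direction, which is routine because $\{\be_j^*(x)\}\in c_0$ guarantees existence of a finite extracting set. Note that completeness of $\cB$ is not really used in this theorem (the map $x\mapsto\{\be_j^*(x)\}$ is well-defined from biorthogonality alone), consistent with the fact that, unlike Theorem \ref{Th1}, no uniqueness statement appears in (ii).
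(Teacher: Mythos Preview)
Your proof is correct and follows essentially the same approach as the paper's own argument: both directions rest on the identity $\sum_{j=1}^k s_j^* = \bone^*_{\bfe A}(x)$ for a suitable greedy set $A$ of size $k$ and choice of signs, combined with the duality estimate $|\bone^*_{\bfe A}(x)|\le\|\bone^*_{\bfe A}\|_*\|x\|$. Your remark that completeness plays no real role in the argument is also pertinent; in the paper it is part of the standing hypotheses (used, among other things, to guarantee $\{\be_j^*(x)\}\in c_0$ so that the rearrangement and the Marcinkiewicz norm are well defined).
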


The relation between democracy functions and embeddings goes back to early papers in the topic \cite{Wo, GN,GHN}.
A detailed study for quasi-greedy bases was recently given in \cite{AA2016}. Our approach is closer to that in \cite[Proposition 3.6  and Corollary 3.7]{DKO2015}, where bounds for $\LN$ are obtained for general bases under assumptions
of the form $\ell^{q,\infty}\hookrightarrow\SX\hookrightarrow\ell^{p,1}$, where $\ell^{p,r}$ are the classical (unweighted) Lorentz spaces.

\

The outline of the paper is as follows. Section 2 collects preliminaries about bases, weights and discrete Lorentz spaces. The proofs of Theorems \ref{Th1}, \ref{Th2}, and \ref{Th3} are given in sections \ref{embeddings-1}, \ref{embeddings-2}, and \ref{estimates}, respectively. In section \ref{superdemocacydual} we give some estimates for $D^*(N)$, and in
section 7 we present corollaries of Theorem \ref{Th3} in various special cases. Finally, section \ref{Examples} is devoted to examples of optimality, some of them new in the literature.

\

{\small {\bf Acknowledgements}. Second, third and fourth authors supported by grants MTM2014-53009-P, MTM2013-40945-P and MTM2016-76566-P (MINECO, Spain). First, third and fourth authors also partially supported by grant 19368/PI/14 (\textit{Fundaci\'on S\'eneca}, Regi\'on de Murcia, Spain). Last author partially supported by the Simons Foundation travel award 210060.}
\medskip

\section{Preliminaries} \label{Preliminaries}

\subsection{Biorthogonal systems} \label{Bases}
We recall some basic notions; see e.g. \cite{Hajek}.
Let $\SX$ be a separable Banach space, and consider
$\mathcal{B} =\{\be_n\}_{n=1}^\infty \subset \SX$ and
$\mathcal{B^*} = \{\be_n^*\}_{n=1}^\infty \subset \SX^*$. Then the collection  $\Been$ is called

\Benu \item[(a)] a {\bf biorthogonal system} if	 $\;\be^*_n(\be_m)= \delta_{n,m}$ for all  $n,m \in \N$

\item[(b)] \textbf{seminormalized} if there exist $A,B\in(0,\infty)$ such that
$\;A \leq \|\be_n\|, \|\be_n^*\|_* \leq B$ for all $n \in \N$
\Eenu
We additionally say that
\Benu
\item[(c)] $\cB$ is \textbf{complete} in $\SX$ if  $\;\overline{\mbox{span} \{\be_n : n \in \N\}} = \SX.$

\item[(d)] $\cB^*$ is \textbf{total} in $\SX$ if the only $x\in \SX$ such that
$\be_n^*(x)=0$ for all $n \in \N$, is $x=0$. This property is known to be equivalent to
\[
 \overline{\mbox{span} \{\be_n^* : n \in \N\}}^{w^*} = \SX^*.\]
\Eenu
Biorthogonal systems as above are ubiquitous: any separable Banach
space contains, for any $\varepsilon > 0$, a complete and total biorthogonal system
$\{\be_n, \be_n^*\}_{n =1}^\infty$ so that
$1 \leq \|\be_n\|, \|\be_n^*\| \leq 1+\varepsilon$ holds for every $n$
(see \cite[Theorem 1.27]{Hajek}). Specific examples include
Schauder bases and their rearrangements, as well as
the trigonometric system in, for instance, $C(\ST)$ or $L_1(\ST)$.


 In the sequel we shall use the terminology {\bf s-biorthogonal} to denote systems that are seminormalized and biorthogonal. 



\

\subsection{Democracy constants} The definition of \emph{upper (super)-democracy sequence} $D(N)$ was already given in \eqref{defDN}. The following properties are elementary.

\begin{lemma}\label{p0}
The sequence $D(N)$ in \eqref{defDN} is quasi-concave, that is
\[ D(N)\leq D(N+1)\mand \frac{D(N+1)}{N+1}\,\leq\, \frac{D(N)}{N},\quad N=1,2,\ldots\]
\end{lemma}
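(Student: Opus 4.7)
The plan is to establish each of the two inequalities by a direct averaging argument over the unit-modulus signs $\bfe\in\cY$, together with the triangle inequality.

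For the monotonicity $D(N)\le D(N+1)$, I would fix a set $A\subset\SN$ with $|A|=N$ and signs $\bfe=\{\e_j\}_{j\in A}\subset\SK$ of modulus one. Since $\SX$ is infinite dimensional, pick any index $n\notin A$, and use the splitting
\[
\bone_{\bfe A}\,=\,\frac12\bigl[\bone_{\bfe A}+\be_n\bigr]+\frac12\bigl[\bone_{\bfe A}-\be_n\bigr],
\]
where both bracketed vectors are of the form $\bone_{\bfe' (A\cup\{n\})}$ with $\bfe'\in\cY$ extending $\bfe$ by $\pm1$. Each summand therefore has norm at most $D(N+1)$, and the triangle inequality gives $\|\bone_{\bfe A}\|\le D(N+1)$. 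Taking the supremum in $A$ and $\bfe$ yields the first assertion.

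For the quasi-concavity $D(N+1)/(N+1)\le D(N)/N$, I would fix $A$ with $|A|=N+1$ and $\bfe\in\cY$, and exploit the elementary combinatorial identity
\[
\sum_{k\in A}\bone_{\bfe(A\setminus\{k\})}\,=\,\sum_{k\in A}\sum_{j\in A,\,j\ne k}\e_j\be_j\,=\,N\,\bone_{\bfe A},
\]
since every $j\in A$ is omitted exactly once among the $N+1$ subsets $A\setminus\{k\}$. Each summand on the left has norm at most $D(N)$ (signs are the restriction of $\bfe$, still in $\cY$), so the triangle inequality produces
\[
N\,\|\bone_{\bfe A}\|\,\le\,(N+1)\,D(N).
\]
Supremizing over $A$ and $\bfe$ gives $D(N+1)\le\tfrac{N+1}{N}D(N)$, which is the desired inequality.

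There is no substantive obstacle here: both steps are purely combinatorial and use only the definition of $D(N)$ together with the fact that the sign class $\cY$ is closed under restriction and under extension by $\pm1$. The main point to verify carefully is simply that the modified sign sequences remain in $\cY$ (i.e., still have unit modulus), which is immediate in both the real and complex settings.
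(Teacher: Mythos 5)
Your proof is correct and takes essentially the same approach as the paper: the monotonicity step via the half-sum decomposition $\bone_{\bfe A}=\tfrac12[\bone_{\bfe A}+\be_n]+\tfrac12[\bone_{\bfe A}-\be_n]$ is the one-element case of the paper's decomposition with $B\supset A$, and your combinatorial identity $\sum_{k\in A}\bone_{\bfe(A\setminus\{k\})}=N\,\bone_{\bfe A}$ is exactly the rearranged form of the paper's $\bone_{\bfe A}=\tfrac1{|A|-1}\sum_{n\in A}\bone_{\bfe(A\setminus\{n\})}$. Both steps and their justifications match the paper's argument.
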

\Proof
First observe that we can write \Be
D(N)=\sup_{{|A|= N}\atop{\e\in\cY}}\|\bone_{\bfe A}\|=\sup_{{|A|\leq N}\atop{\e\in\cY}}\|\bone_{\bfe A}\|.
\label{DNless}\Ee
Indeed, if $ |A|\leq N$, take any $B\subset\SN$ such that $A\subset B$ and $|B|=N$, and
write $\bone_{\bfe A}= \frac{1}{2}[ (\bone_{\bfe A}+ \bone_{B\setminus A}) + (\bone_{\bfe A}- \bone_{B\setminus A})]$. Then \eqref{DNless} follows from the triangle inequality.

Clearly, \eqref{DNless} implies that  $D(N)$ is non-decreasing.
To see that $D(N)/N$ is non-increasing one can argue as in \cite[p. 581]{DKKT}, that is, for $|A|=N$ write\[
\bone_{\bfe A}=\tfrac1{|A|-1}\sum_{n\in A}\bone_{\bfe (A\setminus\{n\})},
\]
and then use the triangle inequality.
%
\ProofEnd
\

Sometimes we shall also make use of the \emph{lower (super)-democracy sequences}
\Be
d(N):= \inf_{{|A|= N}\atop{\bfe\in\cY}}\|\bone_{\bfe A}\|,\mand \ld(N):=\inf_{{|A|\geq N}\atop{\bfe\in\cY}}\|\bone_{\bfe A}\|.
\label{dN}\Ee
Observe that $\ld(N)$ is non-decreasing, $\ld(N)\leq d(N)$, and if $\cB$ is a Schauder basis, say with constant $\ttM$, then
also $d(N)\leq \,\ttM\;\ld(N)$. In general, however, $\ld(N)$ may be much smaller than $d(N)$.
The corresponding notions for $\mathcal B^*$ will be denoted by $d^*(N)$ and $\ld^*(N)$.

\begin{lemma} \label{p0bis} If $\Been$ is a biorthogonal system in $\SX$, then
\[N\;\le\; \min\big\{\,D(N)\,\ld^*(N)\;, \;D^*(N)\,\ld(N)\;\big\},\quad \forall\;N\in \mathbb N.\]
\end{lemma}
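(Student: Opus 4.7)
The plan is to prove both inequalities by a duality-biorthogonality pairing, choosing test vectors whose evaluation under biorthogonality produces exactly $N$, and then applying the definitions of $D(N)$ and $\underline{d}^*(N)$ (respectively $D^*(N)$ and $\underline{d}(N)$) on each side of the resulting bound.

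For the estimate $N\leq D(N)\,\underline{d}^*(N)$, fix any finite set $A\subset\SN$ with $|A|\geq N$ and any $\bfe\in\cY$; I want to show $\|\bone_{\bfe A}^*\|_*\geq N/D(N)$, which upon taking the infimum over such $A,\bfe$ will give the claim. I would pick an arbitrary subset $B\subset A$ with $|B|=N$, and pair $\bone_{\bfe A}^*$ against the vector $\bone_{\bar\bfe B}:=\sum_{j\in B}\bar\e_j\be_j$ (note $\bar\bfe\in\cY$ since $|\bar\e_j|=1$). By biorthogonality,
\[
 \bone_{\bfe A}^*\bigl(\bone_{\bar\bfe B}\bigr)\;=\;\sum_{j\in B}\e_j\bar\e_j\;=\;N,
\]
so
\[
 N\;\leq\;\bignorm{\bone_{\bfe A}^*}_{\!*}\cdot\bignorm{\bone_{\bar\bfe B}}\;\leq\;\bignorm{\bone_{\bfe A}^*}_{\!*}\cdot D(N),
\]
since $|B|=N$. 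Rearranging and taking the infimum over admissible $(A,\bfe)$ yields the first inequality.

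The second inequality $N\leq D^*(N)\,\underline{d}(N)$ follows by the symmetric argument: given $A$ with $|A|\geq N$ and $\bfe\in\cY$, choose any $B\subset A$ with $|B|=N$ and pair $\bone_{\bar\bfe B}^*=\sum_{j\in B}\bar\e_j\be_j^*$ against $\bone_{\bfe A}=\sum_{j\in A}\e_j\be_j$. Biorthogonality again produces $\bone_{\bar\bfe B}^*(\bone_{\bfe A})=N$, and one concludes $\|\bone_{\bfe A}\|\geq N/D^*(N)$; taking the infimum then gives $\underline{d}(N)\geq N/D^*(N)$.

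There is essentially no obstacle here: the only subtle point, which one must be careful not to overlook in the complex case $\SK=\SC$, is that pairing $\bone_{\bfe A}^*$ against $\bone_{\bfe B}$ would give $\sum_{j\in B}\e_j^2$, not $N$; the trick is to conjugate the signs on one side so that $\e_j\bar\e_j=|\e_j|^2=1$. Once this is done, both inequalities are immediate consequences of biorthogonality and the definitions of the upper and lower superdemocracy sequences, and the proof is complete.
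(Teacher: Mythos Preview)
Your proof is correct and follows essentially the same duality-pairing approach as the paper: choose $B\subset A$ with $|B|=N$, pair the corresponding $\bone^*$ and $\bone$ vectors to produce $N$, then bound by $D(N)$ (resp.\ $D^*(N)$) and take the infimum. Your treatment is in fact slightly more careful than the paper's in one respect: you pair $\bone_{\bfe A}^*$ with $\bone_{\bar\bfe B}$ rather than $\bone_{\bfe B}$, which is needed in the complex case $\SK=\SC$ to obtain $\sum_{j\in B}\e_j\bar\e_j=N$ instead of $\sum_{j\in B}\e_j^2$.
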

\begin{proof}
 Let $|A| \ge N$ and take any $B\subset A$ with $|B|=N$. Then
$$N=\bone^*_{\bfe A}(\bone_{\bfe B})\le \|\bone_{\bfe B}\|\|\bone^*_{\bfe A}\|_*\le D(N)\|
\bone^*_{\bfe A}\|_*.$$
The result now follows taking infimum over all $|A|\ge N$ and $\bfe\in\cY$. A similar argument gives the other inequality.
\end{proof}

\medskip

Finally, recall from \cite{KT},
that $\mathcal{B}$ is called \emph{superdemocratic} when $\sup_N D(N)/d(N)<\infty$.
In general, we shall quantify superdemocracy with the sequence \Be \label{sdemo}
\mu_N := \sup_{n\leq N}\dfrac{D(n)}{d(n)}.
\Ee

\medskip

\subsection{Abel summation formula} We shall make frequent use of the following elementary identity:
for all finite sequences $\{x_n\}_{n=1}^N$ in $\SX$ and $\{d_n\}_{n=1}^N$ in $\mathbb K$ it holds
\Bea \label{Abel2}
 x_1 d_1 +\sum_{n=2}^N d_n (x_n- x_{n-1})\,=\,\sum_{n=1}^{N-1} (d_n - d_{n+1}) x_n \,+\,x_Nd_N.\Eea

\subsection{Weight classes}\label{weights}

A \emph{weight} is any sequence  $\eta =\{\eta(j)\}_{j=1}^\infty$ of non-negative numbers with $\eta(1)>0$. We use the following notation
\Bi
\item  $\eta>0$ for a \emph{positive} weight, that is,  $\eta(j)>0$ for all $j=1,2,\ldots$
\item  $\SW$ for the set of positive non-decreasing weights, that is,  $0<\eta(1)\leq\eta(2)\leq\ldots$
\item $\Wd$ is the subset of \emph{doubling} weights, that is, $\eta\in\SW$ with $\eta(2j)\leq c\eta(j)$, for some $c\geq1$ and all $j=1,2,\ldots$
\item  $\Wqc$ is the subset of \emph{quasi-concave} weights, that is, $\eta\in\SW$ with
\[
\frac{\eta(j+1)}{j+1}\leq \frac{\eta(j)}j,\quad j=1,2,\ldots
\]
\item  $\Wc$ is the subset of all \emph{concave} weights, that is, $\eta\in\SW$ with \Be\Delta^2\eta(j)= \Delta\eta(j)-\Delta\eta(j-1)\leq 0,\quad \mbox{for $j=2,3,...$},\label{Wc}\Ee
\Ei
Recall from $\S1$ that $\Dt\eta (j) := \eta(j) - \eta(j-1)$, $j=1,2,\dots $, and by convention we always set $\eta(0)=0$.  It is easy to see from the above definitions that \[
\Wc\subset\Wqc\subset \Wd\subset\SW.\]
Also, every $\eta\in\Wqc$ has a \emph{smallest concave majorant} $\coeta\in\Wc$ with $\eta\leq\coeta\leq 2\eta$.
Finally, notice that $D,D^*\in\Wqc$, by Lemma \ref{p0} above.

\medskip

Associated with a weight $\eta$ we consider the following sequences
\Bi
\item \textbf{summing weight:} $\quad\widetilde \eta (N) = \sum_{j=1}^N \frac{\eta(j)}{j}.$
\item \textbf{difference weight:} $\quad\widehat \eta (j) = j\Delta\eta(j)$\quad (if $\eta\in\SW$)
\item \textbf{dual weight:} $\quad \eta'(j) = {j}/{\eta(j)}\;$ \quad (if $\eta>0$).
\Ei
It is elementary to verify the identities:
\Be
\widetilde{{\widehat \eta}}=\eta,\qquad \widehat{{\widetilde \eta}}=\eta, \qquad  (\eta')'=\eta.
\label{theta}\Ee
Moreover, for every $\eta\in\SW$, the following hold
\begin{equation}\label{e2} \eta\in \Wqc \;\Longleftrightarrow \;\widetilde\eta\in \Wc\;\Longleftrightarrow\;\widehat \eta \le \eta\; \Longleftrightarrow \;\eta'\in \Wqc.\end{equation}
Finally observe that, if $\eta\in\SW$, then
\Be
\Ts\teta(N) \leq \eta (N)\sum_{j=1}^N \frac{1}{j} \leq  \eta(N) (1+\ln N) .\label{tetalog}\Ee

\BE\label{Ex2.3}{\color{white}.}

\Benu \item[(i)] If $\eta(j)= [\ln(j+c)]^\ga$, $\ga > 0$, then  $\eta\in\Wc$ (for sufficiently large $c$) and \[
\teta(j)\approx [\ln(j+1)]^{\ga+1}, \qquad \heta (j)\approx [\ln (j+1)]^{\ga-1}.\]

\item[(ii)] If $\eta(j)=j^\al[\ln (j+c)]^\ga$, with $\al\in(0,1)$ and $\ga\in\SR$ (or with $\al=1$ and $\ga\leq0$), then $\eta\in\Wc$ (for sufficiently large $c$) and $\teta\,\approx \,\heta\,\approx\, \eta$.
\Eenu
\EE

\subsection{Regular weights and dilation indices}\label{regular_weights}  Below, we will sometimes be interested in weights $\eta\in\SW$ with
the property
\Be
c_1\eta(N)\;\leq\;\teta(N)
\;\leq\;c_2\,\eta(N),\quad N=1,2,\ldots\label{regular}\Ee
for some $c_1,c_2>0$. We shall call these weights \emph{regular}.
We now give some conditions under which \eqref{regular} holds.
The lower estimate holds trivially with $c_1=1$ when $\eta\in \Wqc$.
More generally, one has the following
\begin{proposition}\label{p1} Let $\eta\in \Wd$ with doubling constant $c$. Then
   \begin{equation}\label{p1-1}\eta(N)\,\le\, \tfrac{c}{\ln 2}\;\widetilde\eta(N), \quad N=1,2,\ldots\end{equation}
  Moreover, $\widetilde \eta\in \Wd$ with doubling constant bounded by $3c/2$.
\end{proposition}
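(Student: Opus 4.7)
The plan is to prove the two assertions separately, each by elementary manipulations exploiting the doubling property.

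For the first inequality \eqref{p1-1}, the idea is to discard all but the ``upper half'' of $\widetilde\eta(N)$, and bound $\eta(j)$ from below on that range using doubling. More precisely, for every $j\ge\lceil N/2\rceil$ one has $2j\ge N$, so monotonicity and doubling give $\eta(N)\le\eta(2j)\le c\,\eta(j)$. Hence
\[
\widetilde\eta(N)\;\ge\;\sum_{j=\lceil N/2\rceil}^N\frac{\eta(j)}{j}\;\ge\;\frac{\eta(N)}{c}\sum_{j=\lceil N/2\rceil}^N\frac{1}{j}.
\]
I will then verify the harmonic-sum lower bound $\sum_{j=\lceil N/2\rceil}^N 1/j\ge\ln 2$ for every $N\ge 1$, which is immediate from $N=1$ and, for $N\ge 2$, follows by comparison with the integral $\int_{\lceil N/2\rceil}^{N+1}dx/x=\ln\frac{N+1}{\lceil N/2\rceil}\ge\ln 2$ (checked separately for $N$ even and odd). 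Rearranging yields $\eta(N)\le(c/\ln 2)\,\widetilde\eta(N)$.

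For the doubling of $\widetilde\eta$, a naive bound of the increment $\widetilde\eta(2N)-\widetilde\eta(N)$ using $\eta(j)\le\eta(2N)\le c\eta(N)$ on $[N+1,2N]$ only produces a constant of order $1+c^2$ (once \eqref{p1-1} is fed back in), which is too weak. The key trick will be to re-index the whole sum $\widetilde\eta(2N)=\sum_{j=1}^{2N}\eta(j)/j$ by the parity of $j$, writing $j=2k-1$ or $j=2k$ with $1\le k\le N$, and apply $\eta(2k-1)\le\eta(2k)\le c\,\eta(k)$ term by term. This gives
\[
\widetilde\eta(2N)\;\le\;c\sum_{k=1}^N\eta(k)\Bigl(\tfrac{1}{2k-1}+\tfrac{1}{2k}\Bigr)\;\le\;\tfrac{3c}{2}\sum_{k=1}^N\frac{\eta(k)}{k}\;=\;\tfrac{3c}{2}\,\widetilde\eta(N),
\]
where the last step uses the elementary inequality $\frac{1}{2k-1}+\frac{1}{2k}\le\frac{3}{2k}$ valid for every $k\ge 1$ (equality at $k=1$, strict thereafter, as one checks algebraically).

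The only delicate point is the constant $3c/2$: it forces the parity-splitting approach rather than the obvious ``split $[1,2N]$ into $[1,N]$ and $[N+1,2N]$''. Note that the constant is sharp for $c=1$ (i.e.\ $\eta\equiv 1$) at $N=1$, since $H_2/H_1=3/2$, so no slack is available.
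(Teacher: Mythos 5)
Your proof is correct, and it is essentially the same argument as the paper's. For the first inequality you lower-bound $\widetilde\eta(N)$ by the partial sum over $j\ge\lceil N/2\rceil$, use $\eta(N)\le\eta(2j)\le c\,\eta(j)$ on that range, and bound the resulting harmonic tail below by $\ln 2$; the paper does precisely this, only splitting into the two cases $N=2n$ and $N=2n+1$ where you unify them with $\lceil N/2\rceil$. For the doubling of $\widetilde\eta$, the paper also re-indexes $\widetilde\eta(2N)$ by parity and applies $\eta(2k-1)\le\eta(2k)\le c\,\eta(k)$; the only cosmetic difference is that the paper bounds the odd-index term via $\frac{1}{2k-1}\le\frac{1}{k}$ directly, whereas you first keep $\frac{1}{2k-1}+\frac{1}{2k}$ and then use $\frac{1}{2k-1}+\frac{1}{2k}\le\frac{3}{2k}$ — both routes give $\frac{3c}{2k}\eta(k)$ per term. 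Your remark that the naive split $[1,N]\cup[N+1,2N]$ only yields $1+c^2$ and that $3c/2$ is attained for $\eta\equiv 1$ at $N=1$ is accurate and a nice observation.
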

\begin{proof}
If $N=2n+1$,
$$
\teta(N)\geq \sum_{j=n+1}^{2n+1} \frac{\eta(j)}{j}
\geq \eta(n+1)\sum_{j=n+1}^{2n+1} \frac{1}{j} \geq \eta(2n+1)\frac{\ln 2}{c}.
$$
Arguing similarly when $N=2n$ shows (\ref{p1-1}). Finally, the last assertion follows from
\[
\teta(2N)=\sum_{j=1}^{N} \frac{\eta(2j)}{2j} +\sum_{j=1}^{N} \frac{\eta(2j-1)}{2j-1}
\leq\tfrac c2  \sum_{j=1}^{N} \frac{\eta(j)}{j} +c\sum_{j=1}^{N} \frac{\eta(j)}j\,=\, \tfrac {3c}2\,\teta(N).
\]
\end{proof}

The upper bound in \eqref{regular} requires some power growth in $\eta$, as shown in Example \ref{Ex2.3}. This growth is typically quantified with the notion of \emph{dilation index}; see \cite{KPS}.
To each  $\eta>0$, we associate two \emph{dilation sequences}  given by
\Be
\phi_\eta(M) = \inf_{k\geq 1} \frac{\eta(Mk)}{\eta(k)} \mand \Phi_\eta(M) = \sup_{k\geq 1} \frac{\eta(Mk)}{\eta(k)}\,,\quad M=1,2,3, \dots\,
\label{phiM}\Ee
The \emph{lower and upper dilation indices} associated with $\eta$ are defined, respectively, by
\Be \label{Eq-9-7}
i_\eta = \sup_{M>1} \frac{\ln (\phi_\eta(M))}{\ln M} \qquad \mbox{and} \qquad I_\eta = \inf_{M>1} \frac{\ln (\Phi_\eta(M))}{\ln M}\,.
\Ee
For instance, for the weights $\eta$ in Example \ref{Ex2.3} we have $i_\eta=I_\eta=0$ in case (i),
and $i_\eta=I_\eta=\al$ in case (ii). Observe also that
$\phi_{\eta'}(M)=M/\Phi_{\eta}(M)$, so we always have \Be
i_{\eta'}=1-I_\eta.\label{iI1}\Ee

\begin{proposition}\label{p2}  Let $\eta\in \SW$.
Then  $\quad\sup_{N\geq1}\frac{\widetilde \eta(N) }{\eta(N)}<\infty \quad \Longleftrightarrow \quad
i_\eta>0
.$
\end{proposition}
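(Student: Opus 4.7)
The plan is to prove both directions of the equivalence by a dyadic-scale decomposition of $\widetilde\eta(N)$ at integer powers of a fixed $M>1$.

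For the implication $i_\eta>0\Rightarrow \sup_N \widetilde\eta(N)/\eta(N)<\infty$, I will first pick an integer $M\geq 2$ and $\lambda>1$ with $\phi_\eta(M)\geq\lambda$ (which exists by definition of $i_\eta$). Iterating the inequality $\eta(Mk)\geq\lambda\eta(k)$ yields $\eta(M^i)\leq\lambda^{i-p}\eta(M^p)$ for $0\leq i\leq p$. I will then decompose
\[
\widetilde\eta(M^p)\;=\;\eta(1)+\sum_{i=1}^{p}\sum_{j=M^{i-1}+1}^{M^i}\frac{\eta(j)}{j},
\]
estimate each inner sum by $\eta(M^i)\ln M$ (via the integral test $\sum_{j=M^{i-1}+1}^{M^i}1/j\leq\ln M$), and sum the resulting geometric series in $\lambda^{i-p}$ to obtain $\widetilde\eta(M^p)\leq C_1\eta(M^p)$ with $C_1=1+\lambda\ln M/(\lambda-1)$. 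For general $N$ with $M^{p-1}\leq N<M^p$, I will split $\widetilde\eta(N)=\widetilde\eta(M^{p-1})+\sum_{M^{p-1}<j\leq N}\eta(j)/j$: monotonicity of $\eta$ bounds the first piece by $C_1\eta(M^{p-1})\leq C_1\eta(N)$ and the second by $\eta(N)\ln M$, producing the uniform bound $\widetilde\eta(N)\leq (C_1+\ln M)\eta(N)$.

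For the converse, assume $\widetilde\eta(N)\leq C\eta(N)$ for all $N$. For any integer $M\geq 2$ and $N\geq 1$, I will observe that
\[
\widetilde\eta(MN)-\widetilde\eta(N)\;=\;\sum_{j=N+1}^{MN}\frac{\eta(j)}{j}\;\geq\;\eta(N)\int_{N+1}^{MN+1}\frac{dx}{x}\;\geq\;\eta(N)(\ln M-\ln 2),
\]
and combine this with $\widetilde\eta(MN)\leq C\eta(MN)$ to get $\eta(MN)/\eta(N)\geq(\ln M-\ln 2)/C$ uniformly in $N$, hence $\phi_\eta(M)\geq(\ln M-\ln 2)/C$. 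Choosing $M$ large enough that $\phi_\eta(M)\geq\lambda$ for some $\lambda>1$, and invoking the supermultiplicativity $\phi_\eta(M_1M_2)\geq\phi_\eta(M_1)\phi_\eta(M_2)$ (immediate from the definition), one obtains $\phi_\eta(M^n)\geq\lambda^n$, so $i_\eta\geq\ln\lambda/\ln M>0$.

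The main technical point will be the transition from the estimate at $M^p$ back to arbitrary $N$ in the forward direction: since $\eta\in\SW$ carries no a priori upper dilation control, $\eta(M^p)$ may be much larger than $\eta(N)$ when $N$ lies just below $M^p$, so one cannot simply compare $\widetilde\eta(N)\leq\widetilde\eta(M^p)\leq C_1\eta(M^p)$ to $\eta(N)$. The splitting must be anchored at $M^{p-1}$ so that monotonicity alone delivers $\eta(M^{p-1})\leq\eta(N)$, and the remainder of length at most $\ln M$ is absorbed by the second term.
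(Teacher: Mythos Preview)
Your proof is correct and essentially matches the paper's argument: the forward direction is the same dyadic decomposition at powers of a fixed base with a geometric sum, including the correct anchoring at $M^{p-1}$ for general $N$. For the converse the paper argues by contrapositive (if $i_\eta=0$ then $\phi_\eta(M)=1$ for every $M$, so one can pick $k_M$ with $\eta(Mk_M)/\eta(k_M)\leq 2$ and obtain $\widetilde\eta(Mk_M)\geq\tfrac12\eta(Mk_M)\ln M$), whereas you prove directly the quantitative bound $\phi_\eta(M)\geq(\ln M-\ln 2)/C$; both routes rest on the same monotonicity estimate $\sum_{N<j\leq MN}\eta(j)/j\gtrsim\eta(N)\ln M$. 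Note that your final supermultiplicativity step is unnecessary: once $\phi_\eta(M)>1$ for a single $M$, the definition $i_\eta=\sup_{M'>1}\ln\phi_\eta(M')/\ln M'$ already gives $i_\eta\geq\ln\phi_\eta(M)/\ln M>0$.
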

\begin{proof}
 Assume first that $i_\eta > 0$. Then, for some integer $s_0 >1$ we have $\la := \phi_\eta(s_0)>1$. Suppose first that $N=s_0^n$ for some $n=1,2,3, \dots.$ Then,
\Bea \label{L2.1-4}
\widetilde \eta(N) &=& \widetilde \eta(s_0^n) =
\eta(1) + \sum_{k=0}^{n-1} \sum_{j=s_0^k+1}^{s_0^{k+1}} \frac{\eta(j)}{j} \nonumber \\
&\leq& \eta(1) + \sum_{k=0}^{n-1} \eta(s_0^{k+1})\sum_{j=s_0^k+1}^{s_0^{k+1}} \frac{1}{j}  \leq (1+\ln s_0) \sum_{k=0}^{n} \eta(s_0^{k})\,.
\Eea
Now, by definition $\phi_\eta(s_0)\leq \eta(s_0^{k+1})/\eta(s_0^{k})$, and therefore \[
\eta(s_0^k)\leq \la^{-1}\,\eta(s_0^{k+1})\leq...\leq\la^{-(n-k)}\eta(s_0^n),\quad k=0,1,\ldots,n.\] Inserting this expression into \eqref{L2.1-4} we obtain
$$
\teta (N) \leq \frac{1+\ln s_0}{1-\la^{-1}}\;\eta(s_0^n)\,= \,c\, \eta(N)\,.
$$
For arbitrary $N>1$, choose $n\in \SN$ such that $s_0^{n-1} < N \leq s_0^{n}.$ Then,
$$
\widetilde{\eta} (N) = \widetilde{\eta} (s_0^{n-1})+ \sum_{j=s_0^{n-1}+1}^N \frac{\eta(j)}{j} \\
\leq c\eta (s_0^{n-1})+\eta(N)\ln s_0 \lesssim \eta(N).
$$
Conversely, assume that $i_\eta=0$. Then $\phi_\eta(M)=1$ for all $M\ge 2$. In particular, for each $M\ge 2$ there exists
$k_M\in \mathbb N$ with 
$\frac{\eta(Mk_M)}{\eta(k)}\le 2$, $\forall k\ge k_M .$
Therefore
$$\widetilde \eta(Mk_M)\ge \sum_{k=k_M}^{Mk_M}\frac{\eta(k)}{k}\ge \frac{1}{2}\eta(Mk_M)\ln M ,$$
leading to $\sup_N\frac{\widetilde\eta(N)}{\eta(N)}=\infty$.

\end{proof}

\begin{corollary}\label{cor2}
Let $\eta\in\Wqc$. Then $\eta'$ is regular if and only if $I_\eta<1$.
\end{corollary}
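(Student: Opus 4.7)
The corollary should follow almost immediately by combining three earlier ingredients, so my plan is to assemble them rather than develop anything new.

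First, I would unpack what ``regular'' means in this context. Recall that regularity asks for two-sided bounds $c_1\eta'(N)\le\widetilde{\eta'}(N)\le c_2\eta'(N)$. Since $\eta\in\Wqc$, property \eqref{e2} gives $\eta'\in\Wqc$, so in particular $\eta'\in\SW$ and the sequence $\eta'(j)/j=1/\eta(j)$ is non-increasing. Consequently
\[
\widetilde{\eta'}(N)=\sum_{j=1}^{N}\frac{\eta'(j)}{j}\,\ge\, N\cdot\frac{\eta'(N)}{N}\,=\,\eta'(N),
\]
so the lower inequality in the definition of regularity is free, with $c_1=1$. Thus $\eta'$ is regular if and only if the upper bound $\widetilde{\eta'}(N)\le c_2\,\eta'(N)$ holds for some $c_2$ and all $N$.

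Second, I would invoke Proposition~\ref{p2} applied to the weight $\eta'\in\SW$: the upper estimate $\sup_N \widetilde{\eta'}(N)/\eta'(N)<\infty$ is equivalent to $i_{\eta'}>0$. Finally, I would use the elementary identity \eqref{iI1}, which reads $i_{\eta'}=1-I_\eta$; hence $i_{\eta'}>0$ if and only if $I_\eta<1$. Chaining these equivalences yields the corollary.

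The ``hard part'' is essentially nil, since all the analytic work is done in Proposition~\ref{p2}; the only thing to be careful about is confirming that $\eta'$ really belongs to $\SW$ (which is the hypothesis needed for Proposition~\ref{p2}), and this is precisely what \eqref{e2} provides under the quasi-concavity assumption on $\eta$. I would keep the write-up to a couple of lines, explicitly citing \eqref{e2}, Proposition~\ref{p2}, and \eqref{iI1} in that order.
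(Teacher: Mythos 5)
Your proposal is correct and follows the paper's proof step for step: establish the free lower bound $\eta'(N)\le\widetilde{\eta'}(N)$ using monotonicity, check $\eta'\in\SW$ via \eqref{e2}, apply Proposition \ref{p2} to convert the upper bound into $i_{\eta'}>0$, and finish with the identity \eqref{iI1}. The only cosmetic difference is that you justify the lower bound via quasi-concavity of $\eta'$ while the paper uses $1/\eta(j)\ge 1/\eta(N)$ directly from $\eta\in\SW$; these are the same observation.
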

\Proof
First, $\eta\in\SW$ already implies $\eta'(N)=N/\eta(N)\leq\sum_{j=1}^N1/\eta(j)=\widetilde{(\eta')}(N)$.
Next $\eta \in \Wqc$ implies $\eta'\in\SW$, and by Proposition \ref{p2}, the converse inequality $\widetilde{(\eta')}\lesssim \eta'$ is equivalent to $i_{\eta'}>0$,
and the result follows from the identity in \eqref{iI1}.\ProofEnd

\medskip

\subsection{Weighted Lorentz spaces.} \label{spaces}

We recall a few  basic properties of the class of discrete weighted Lorentz spaces.
Although not necessary for the proofs of Theorems \ref{Th3}, \ref{Th1} and \ref{Th2}, this subsection
clarifies the role of the different conditions we impose on the theorems.

For a non-negative weight $\eta$ and $0<r\leq\infty$ we let
\Be \label{discrete}
\ell_\eta^r =\left\{{\bf s}=\{s_n\}_{n=1}^\infty\in c_0: \|{\bf s}\|_{\ell_\eta^r}:= \Big(\sum_{j=1}^\infty \big[s_j^* \eta(j)\big]^r \tfrac{1}{j}\Big)^{1/r} < \infty  \right\}\,
\Ee
(with the obvious modification if $r=\infty$). In the literature  $\ell^r_\eta$ is sometimes denoted
$d(r,w)$ with $w_j=\frac{\eta(j)^r}{j}$, and the weight $w$ is required to decrease to 0 and $\sum_{j=1}^\infty w_j=\infty$; see e.g. \cite[p. 175]{LZ} or references in \cite[p. 28]{CRS2007}. We will be dealing only with the case $r=1$ but we shall consider more general weights, namely $w=\{\eta(j)/j\}$ and  $w=\Dt\eta$, for $\eta\in \SW$.

It is well-known that $d(1,w)$ are quasi-normed spaces if and only if $W(N)=\sum_{j=1}^N w_j$ satisfies a doubling condition (see \cite[Theorem 2.2.16]{CRS2007}).
 Hence  $\tilde\eta\in \Wd$ implies that $\ell^1_\eta$ is quasi-normed, and
 $\eta\in \Wd$ implies that both $\ell^1_{\widehat\eta}$ and $\ell^1_{\eta}$ are quasi-normed (by \eqref{theta} and Proposition \ref{p1}).

Clearly if $\eta\in  \Wqc$ then $\widehat \eta\le \eta$ and therefore $\ell^1_{\eta}\hookrightarrow \ell^1_{\widehat\eta}$. Below we show that this is the case also for $\eta\in \Wd$.
The following basic lemma will be used often.

\begin{lemma}\label{A2}
If $\nu,\xi$ are non-negative sequences, the following holds
	$$\tilde{\nu}\leq \tilde{\xi}\quad\Longleftrightarrow \quad\sum_{j=1}^\infty a_j^*\dfrac{\nu(j)}{j}\leq \sum_{j=1}^\infty a_j^*\dfrac{\xi(j)}{j},\quad\forall\;\mbox{non-increasing}\;a^*_j.$$
	In particular, $\tilde{\nu}\leq \tilde{\xi}$ if and only if $\ell^1_\xi\hookrightarrow \ell^1_\nu$ with embedding of norm $1$.
\end{lemma}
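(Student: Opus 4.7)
The implication $(\Leftarrow)$ in the first equivalence is immediate: testing the hypothesis on the non-increasing sequence $a_j^* = \mathbf{1}_{\{j\le N\}}$ reduces the inequality to $\tilde\nu(N)\le \tilde\xi(N)$ for every $N$, which is exactly $\tilde\nu\le\tilde\xi$.

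For the converse $(\Rightarrow)$ I would use summation by parts. Set $b_j=\nu(j)/j$, $c_j=\xi(j)/j$, and $B_N=\tilde\nu(N)$, $C_N=\tilde\xi(N)$, so that the hypothesis gives $0\le B_N\le C_N$ for every $N$. Since $\{a_j^*\}$ is non-increasing and non-negative, the forward differences $\Delta_j := a_j^*-a_{j+1}^*$ are non-negative. Applying Abel's identity \eqref{Abel2} (with $d_n=a_n^*$ and $x_n=C_n-B_n$, where $x_0=0$) on a finite truncation yields
\begin{equation*}
\sum_{j=1}^N a_j^*\,(c_j-b_j)\;=\;\sum_{j=1}^{N-1}\Delta_j\,(C_j-B_j)\;+\;a_N^*(C_N-B_N),
\end{equation*}
which is a sum of non-negative terms. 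Letting $N\to\infty$ produces the desired inequality, the two sides being monotone limits in $[0,\infty]$ of their truncations.

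For the second equivalence, $\|\bs\|_{\ell^1_\eta}$ depends on $\bs\in c_0$ only through its non-increasing rearrangement $\{s_j^*\}$, which is itself a non-increasing sequence tending to $0$. Hence the norm-one embedding $\ell^1_\xi\hookrightarrow\ell^1_\nu$ is exactly the first equivalence applied to every non-increasing $a_j^*$ arising as $s_j^*$ of some $\bs\in c_0$. Conversely, testing the embedding on $\bs$ with $s_j^*=\mathbf{1}_{\{j\le N\}}$ recovers $\tilde\nu(N)\le\tilde\xi(N)$, closing the loop.

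The only subtle point is a bookkeeping one: Abel's identity is finite, whereas the target inequality involves infinite sums. Because all terms in the truncated identity are non-negative, the inequality survives passage to the limit, even in the degenerate case where $\tilde\xi$ or $\tilde\nu$ diverges (the resulting inequality in $[0,\infty]$ being trivial). Apart from this, the proof is essentially a direct application of Abel summation and the observation that the Lorentz quasi-norms are symmetric in $\bs$.
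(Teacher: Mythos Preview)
Your proof is correct and follows essentially the same approach as the paper: Abel summation for the forward implication and testing on indicator sequences $a_j^*=\mathbf{1}_{\{j\le N\}}$ for the converse. The only cosmetic difference is that you apply Abel's formula to the difference $\sum a_j^*(c_j-b_j)$ directly, whereas the paper rewrites $\sum_{j=1}^N a_j^*\frac{\nu(j)}{j}$ via Abel, bounds $\tilde\nu(j)$ by $\tilde\xi(j)$ termwise, and then undoes Abel; both arguments are equivalent.
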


\begin{proof}
	Suppose that $\tilde{\nu}\leq \tilde{\xi}$. Then, using the Abel summation formula in \eqref{Abel2},
	\begin{eqnarray*}
		\sum_{j=1}^N a_j^*\dfrac{\nu(j)}{j} &=& a^*_1\nu(1)+\sum_{j=2}^N a_j^*(\tilde{\nu}(j)-\tilde{\nu}(j-1)) = \sum_{j=1}^{N-1}(a_j^* - a_{j+1}^*)\tilde{\nu}(j) + a_N^*\tilde{\nu}(N)\\
		&\leq& \sum_{j=1}^{N-1}(a_j^* - a_{j+1}^*)\tilde{\xi}(j)+a_N^*\tilde{\xi}(N) = \sum_{j=1}^N a_j^*\dfrac{\xi(j)}{j}.
	\end{eqnarray*}	
	Now, let $N\rightarrow\infty$ and we obtain the result.	To show the other implication, we only have to take $a_j^* = 1$ if $1\leq j\leq N$ and $a_j^*=0$ in other case.
\end{proof}

\begin{corollary}\label{p1bis} 
\sline (i) If $\eta\in \Wd$, then 
$\ell^1_\eta\hookrightarrow \ell^1_{\widehat\eta}$.

\sline (ii) If $\eta\in \SW$, then $\quad \ell^1_{\widehat\eta}
\hookrightarrow \ell^1_\eta \quad \Longleftrightarrow \quad
i_\eta>0\,.$
\sline (iii) If $\eta\in \Wd$, then $\quad \ell^1_\eta=\ell^1_{\widehat\eta}\quad \Longleftrightarrow \quad
i_\eta>0\,.$
\end{corollary}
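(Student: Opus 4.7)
The strategy is to combine Lemma \ref{A2} with the key identity $\widetilde{\widehat\eta}=\eta$ from \eqref{theta}, which reduces every inclusion between the spaces $\ell^1_\eta$ and $\ell^1_{\widehat\eta}$ to a pointwise comparison between $\eta$ and $\widetilde\eta$. Note that Lemma \ref{A2} immediately yields the general principle: for non-negative weights $\nu,\xi$, one has a continuous embedding $\ell^1_\xi \hookrightarrow \ell^1_\nu$ (with constant $C$) if and only if $\widetilde\nu \leq C\widetilde\xi$. The forward direction uses the Abel summation computation already carried out in the proof of Lemma \ref{A2}, while the converse is obtained by testing on $a^*_j = \mathbf{1}_{\{1\le j\le N\}}$. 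With this equivalence in hand, Propositions \ref{p1} and \ref{p2} supply the quantitative comparisons needed for each of (i)--(iii).

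For part (i), apply the principle above with $\xi = \eta$ and $\nu = \widehat\eta$. The embedding $\ell^1_\eta \hookrightarrow \ell^1_{\widehat\eta}$ reduces to the inequality $\widetilde{\widehat\eta} \leq C \widetilde\eta$, which by \eqref{theta} becomes simply $\eta \leq C\widetilde\eta$. But this is precisely the content of Proposition \ref{p1}, valid for every $\eta \in \Wd$ with $C = c/\ln 2$.

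For part (ii), apply the principle in the opposite direction, with $\xi = \widehat\eta$ and $\nu = \eta$. The embedding $\ell^1_{\widehat\eta} \hookrightarrow \ell^1_\eta$ is equivalent to $\widetilde\eta \leq C\widetilde{\widehat\eta} = C\eta$, which by Proposition \ref{p2} holds exactly when $i_\eta > 0$. This establishes both directions of the stated equivalence. Part (iii) is then an immediate consequence: if $\eta \in \Wd$, part (i) always provides the inclusion $\ell^1_\eta \hookrightarrow \ell^1_{\widehat\eta}$, and by part (ii) the reverse inclusion holds if and only if $i_\eta > 0$.

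No step here should be a serious obstacle; the only mild subtlety is the passage from the norm-$1$ version of Lemma \ref{A2} to the version with a general constant $C$, which must be noted explicitly at the outset so that the applications of Propositions \ref{p1} and \ref{p2} (which produce inequalities with non-trivial constants) translate cleanly into bounded embeddings rather than norm-one embeddings.
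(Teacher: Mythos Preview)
Your proposal is correct and follows essentially the same approach as the paper, which simply says ``Combine Lemma \ref{A2} with \eqref{theta} and Propositions \ref{p1} and \ref{p2}.'' You have written out in detail exactly what that one-line proof means, including the mild but worthwhile observation that Lemma \ref{A2} as stated concerns norm-$1$ embeddings and must be scaled to accommodate the non-trivial constants produced by Propositions \ref{p1} and \ref{p2}.
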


\Proof Combine Lemma \ref{A2} with \eqref{theta} and Propositions \ref{p1} and \ref{p2}.
\ProofEnd

\begin{corollary} \label{c0} $\ell^1_{\eta}=c_0$ if and only if $\;\teta\;$ is bounded. In particular,
$\ell^1_{\heta}=c_0$ if and only if $\eta\in\SW$ is bounded.\end{corollary}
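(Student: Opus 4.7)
The plan is straightforward, based on the observation that $\ell^1_\eta \subseteq c_0$ is automatic from the definition in \eqref{l1eta}, so we only need to characterize when $c_0 \subseteq \ell^1_\eta$. I would split the proof of the first statement into the easy sufficiency and a standard block-construction for necessity, and then deduce the ``in particular'' clause from the identity $\widetilde{\heta}=\eta$ in \eqref{theta}.

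For sufficiency, I would assume $\teta$ bounded, i.e.\ $M := \sum_{j=1}^\infty \eta(j)/j < \infty$. Then for any $\mathbf{s}\in c_0$, the non-increasing rearrangement satisfies $s_j^* \leq \|\mathbf{s}\|_\infty$, which gives
\[
\|\mathbf{s}\|_{\ell^1_\eta}\;=\;\sum_{j=1}^\infty s_j^* \frac{\eta(j)}{j} \;\leq\; M \,\|\mathbf{s}\|_\infty,
\]
so $c_0 \hookrightarrow \ell^1_\eta$, and together with the reverse inclusion we get $\ell^1_\eta = c_0$.

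For necessity, assume $\teta$ is unbounded, i.e.\ $\sum_{j=1}^\infty \eta(j)/j = \infty$. I would then construct a witness $\mathbf{s}\in c_0\setminus\ell^1_\eta$ by the standard block trick: select indices $0 = N_0 < N_1 < N_2 < \ldots$ so that $\sum_{j=N_{k-1}+1}^{N_k} \eta(j)/j \,\geq\, 1$ for every $k\geq 1$, which is possible because the series diverges. Define $a_j := 1/k$ for $N_{k-1} < j \leq N_k$. Then $\{a_j\}$ is non-increasing, tends to $0$ (so belongs to $c_0$), equals its own decreasing rearrangement, and
\[
\|\{a_j\}\|_{\ell^1_\eta} \;=\; \sum_{k=1}^\infty \frac{1}{k}\sum_{j=N_{k-1}+1}^{N_k} \frac{\eta(j)}{j} \;\geq\; \sum_{k=1}^\infty \frac{1}{k} \;=\; +\infty,
\]
proving $\ell^1_\eta \neq c_0$.

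Finally, for the ``in particular'' clause, I would apply the first part with $\eta$ replaced by $\heta$. By \eqref{theta} we have $\widetilde{\heta}=\eta$, so $\widetilde{\heta}$ is bounded precisely when the non-decreasing sequence $\eta\in\SW$ is bounded, and the conclusion follows. I do not foresee a real obstacle here; the only point requiring a line of care is the block construction, which must be written so that $\{a_j\}$ is automatically non-increasing and equal to its own rearrangement.
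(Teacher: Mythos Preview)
Your proof is correct. Both directions are handled cleanly, and the ``in particular'' clause follows immediately from the identity $\widetilde{\heta}=\eta$ in \eqref{theta}, exactly as you say.

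Your route differs from the paper's in the necessity direction. The paper observes that $c_0=\ell^1_\xi$ for the weight $\xi=\{1,0,0,\ldots\}$ (so $\tilde\xi\equiv1$), and then invokes Lemma~\ref{A2}, which characterizes the embedding $\ell^1_\xi\hookrightarrow\ell^1_\eta$ by the pointwise inequality $\teta\leq c\,\tilde\xi$; this reduces the whole corollary to one line. Your argument instead produces an explicit element of $c_0\setminus\ell^1_\eta$ via a standard block construction whenever $\sum_j\eta(j)/j=\infty$. The paper's approach is shorter because it recycles the Abel-summation machinery already set up in Lemma~\ref{A2}; your approach is more self-contained and makes the set-theoretic failure $\ell^1_\eta\subsetneq c_0$ completely explicit, without appealing to any auxiliary embedding lemma or to an implicit closed-graph step to pass from set equality to a bounded inclusion.
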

\Proof The inclusion $\ell^1_\eta\hookrightarrow c_0$ is always true. For the converse, write $c_0=\ell^1_\xi$ with $\xi=\{1,0,0,\ldots\}$, and use Lemma \ref{A2}.\ProofEnd




We now turn to the \emph{discrete weighted  Marcinkiewicz space} defined in (\ref{marcinkiewicz}), which we compare with the Lorentz space $\ell^\infty_\eta$ in \eqref{discrete}. Observe that, for $\bs=\{s_n\}\in c_0$,
 \begin{equation}\label{ms}
 \|\bs\|_{m(\eta)}=\sup_{{|A|=N}\atop{N\in\SN}}\tfrac{\eta(N)}N\sum_{n\in A} |s_n|.
 \end{equation}
So $m(\eta)$ is a normed space for any non-negative weight $\eta$.
It is also easy to see that, $m(\eta)=c_0$ if and only if $\eta$ is bounded.

 \begin{lemma}\label{L2.2}
\sline (i) $m(\eta)\hookrightarrow \ell^\infty_{\eta}$, with embedding norm 1.

\sline (ii) If $\eta\in \SW$, then  $\;\ell^\infty_\eta\stackrel{c}\hookrightarrow m(\eta)\; $ if and only if $\; \widetilde{(\eta')}\le c \eta'\;$, that is
\Be\sum_{j=1}^N\frac{1}{\eta(j)}\le \,\frac{c\,N}{\eta(N)},\quad N=1,2,\ldots\label{1overeta}\Ee

\sline (iii) If $\eta\in \Wqc$, then $\quad m(\eta) = \ell^\infty_{\eta}  \quad\Longleftrightarrow\quad I_\eta<1$.
\end{lemma}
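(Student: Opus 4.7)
The plan is to handle the three parts in order, reducing (iii) to the first two.

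For (i), the key observation is that for any non-increasing rearrangement $\{s_j^*\}$ and any $k\in\SN$, one has $k s_k^* \le \sum_{j=1}^k s_j^*$. Multiplying by $\eta(k)/k$ gives $s_k^*\eta(k) \le \tfrac{\eta(k)}{k}\sum_{j=1}^k s_j^* \le \|\bs\|_{m(\eta)}$, and taking the supremum over $k$ yields $\|\bs\|_{\ell^\infty_\eta}\le \|\bs\|_{m(\eta)}$.

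For (ii), the reverse implication is the straightforward half. Assuming $\widetilde{(\eta')}(N)\le c\,\eta'(N)$, I bound $s_j^*\le \|\bs\|_{\ell^\infty_\eta}/\eta(j)$ and sum to obtain
\[
\tfrac{\eta(k)}{k}\sum_{j=1}^k s_j^* \,\le\, \|\bs\|_{\ell^\infty_\eta}\,\tfrac{\eta(k)}{k}\sum_{j=1}^k\tfrac{1}{\eta(j)} \,=\, \|\bs\|_{\ell^\infty_\eta}\,\tfrac{\eta(k)}{k}\,\widetilde{(\eta')}(k) \,\le\, c\,\|\bs\|_{\ell^\infty_\eta},
\]
using $\widetilde{(\eta')}(k)\le c\,k/\eta(k)$. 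For the forward implication, which is the step requiring the right choice, I will test against $\bs=\{1/\eta(j)\}_{j=1}^N$ (zero-extended). Since $\eta\in\SW$ this sequence is already non-increasing, so $s_j^* = 1/\eta(j)$ for $j\le N$, giving $\|\bs\|_{\ell^\infty_\eta}=1$, while by taking the choice $k=N$ in the supremum defining the $m(\eta)$-norm we get $\|\bs\|_{m(\eta)}\ge \tfrac{\eta(N)}{N}\sum_{j=1}^N\tfrac{1}{\eta(j)} = \tfrac{\eta(N)}{N}\widetilde{(\eta')}(N)$. The assumption $\|\bs\|_{m(\eta)}\le c\|\bs\|_{\ell^\infty_\eta}$ then forces $\widetilde{(\eta')}(N)\le c\,N/\eta(N) = c\,\eta'(N)$ for every $N$.

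For (iii), I combine (i) and (ii). The identity $m(\eta)=\ell^\infty_\eta$ with equivalent norms is equivalent, by (ii), to the existence of $c>0$ with $\widetilde{(\eta')}\le c\,\eta'$. Now $\eta\in\Wqc$ implies by \eqref{e2} that $\eta'\in\Wqc\subset\SW$, and in particular $\eta'(N)\le \widetilde{(\eta')}(N)$ holds trivially; hence the two-sided bound is exactly the regularity of the weight $\eta'$ in the sense of \eqref{regular}. Corollary \ref{cor2} then identifies this regularity with the condition $I_\eta<1$, which closes the argument.

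The main obstacle is really confined to part (ii): one needs the candidate extremal sequence $\{1/\eta(j)\}$ to be non-increasing (which is why the hypothesis $\eta\in\SW$ appears) and must simultaneously realize equality on one side of the Marcinkiewicz norm. The other two parts are essentially packaging: (i) is a one-line pointwise inequality, and (iii) is just the composition of (ii) with the dilation-index characterization already established in Corollary \ref{cor2}.
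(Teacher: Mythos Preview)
Your proof is correct and follows essentially the same approach as the paper: the same pointwise inequality for (i), the same test sequence $\{1/\eta(j)\}$ and the same bound $s_j^*\le \|\bs\|_{\ell^\infty_\eta}/\eta(j)$ for the two directions in (ii), and the same reduction via Corollary~\ref{cor2} for (iii). The only minor difference is that you truncate the test sequence to $\{1/\eta(j)\}_{j=1}^N$ rather than using the full infinite sequence, which is harmless (and in fact sidesteps the need to check that $\{1/\eta(j)\}\in c_0$).
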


\begin{proof}
(i) This follows easily from $s^*_N\leq \frac1N\sum_{j=1}^N s_j^*$.
\sline (ii) Assume that $\ell^\infty_\eta\stackrel{c}\hookrightarrow m(\eta)$. Then picking ${\bf s}=\{{1}/{\eta(j)}\}\in \ell^\infty_{\eta}$ we obtain
$$\|\bs\|_{m(\eta)}=\sup_N\frac{\eta(N)}{N}\sum_{j=1}^N \frac{1}{\eta(j)}\le c\|\bs\|_{\ell^\infty_\eta}=c,$$ obtaining the condition \eqref{1overeta}.
Conversely, \eqref{1overeta} and the inequality $s_j^*\le \frac{\|\bs\|_{\ell^\infty_\eta}}{\eta(j)}$
easily lead to $\|\bs\|_{m(\eta)}\leq c\|\bs\|_{\ell^\infty_\eta}$.

\sline (iii)  This follows from (i), (ii) and Corollary \ref{cor2}.
\end{proof}


We conclude with a duality result which is known in the literature; see \cite[$\S2.4$]{CRS2007}. We present an elementary proof.

\begin{theorem} \label{dual} If $\eta\in \Wd$ and $\Ds\inf_{N\in\SN}\tfrac{\eta(N)}{N}=0$, then $(\ell^1_{\widehat\eta})^*=m(\eta')$ isometrically.
\end{theorem}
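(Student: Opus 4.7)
The plan is to establish the isometric identification via the canonical pairing $\langle\bx,\by\rangle=\sum_n x_n y_n$, showing that every $\by\in m(\eta')$ defines a bounded functional $\phi_\by$ on $\ell^1_{\widehat\eta}$ with $\|\phi_\by\|\le\|\by\|_{m(\eta')}$, and conversely that every $\phi\in(\ell^1_{\widehat\eta})^*$ arises this way with $\|\by\|_{m(\eta')}\le\|\phi\|$. The doubling assumption $\eta\in\Wd$ guarantees, via Corollary~\ref{p1bis}, that $\ell^1_{\widehat\eta}$ is actually a quasi-Banach space; the extra hypothesis $\inf_N\eta(N)/N=0$ will be used exactly once, to force the coefficients of the constructed $\by$ to lie in $c_0$.

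For $\by\in m(\eta')$ and $\bx\in\ell^1_{\widehat\eta}$, the discrete Hardy--Littlewood rearrangement inequality gives $\sum_n|x_n y_n|\le\sum_j x_j^* y_j^*$. Setting $Y(k):=\sum_{j=1}^k y_j^*\le\|\by\|_{m(\eta')}\,\eta(k)$ and applying Abel's identity \eqref{Abel2} twice (first to rewrite $\sum_{j\le N}x_j^* y_j^*$ in terms of $Y(j)$ and the non-increasing differences $x_j^*-x_{j+1}^*$, and then in reverse with $\eta$ in place of $Y$), one readily obtains
\[
\sum_{j=1}^N x_j^* y_j^* \,\le\, \|\by\|_{m(\eta')} \sum_{j=1}^N x_j^*\,\Delta\eta(j) \,\le\, \|\by\|_{m(\eta')}\,\|\bx\|_{\ell^1_{\widehat\eta}},
\]
and letting $N\to\infty$ yields $|\phi_\by(\bx)|\le\|\by\|_{m(\eta')}\,\|\bx\|_{\ell^1_{\widehat\eta}}$.

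Conversely, given $\phi\in(\ell^1_{\widehat\eta})^*$, let $y_n=\phi(e_n)$, where $\{e_n\}$ denotes the sequence of unit coordinate vectors in $\ell^1_{\widehat\eta}$. For each finite $A\subset\SN$, select $\bfe\in\cY$ with $\e_n y_n=|y_n|$ and observe that $\sum_{n\in A}\e_n e_n$ has non-increasing rearrangement $(1,\ldots,1,0,\ldots)$ with $|A|$ ones, hence $\ell^1_{\widehat\eta}$-norm exactly $\eta(|A|)$. This gives $\sum_{n\in A}|y_n|\le\|\phi\|\,\eta(|A|)$, and taking $A$ to index the $k$ largest values of $|y_n|$ yields $\sum_{j=1}^k y_j^*\le\|\phi\|\,\eta(k)$, i.e.\ $\|\by\|_{m(\eta')}\le\|\phi\|$, provided $\by\in c_0$. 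Here the hypothesis $\inf_N\eta(N)/N=0$ enters: there exist $N_k\to\infty$ with $\eta(N_k)/N_k\to 0$, along which $y_{N_k}^*\le\tfrac{1}{N_k}\sum_{j\le N_k}y_j^*\le\|\phi\|\,\eta(N_k)/N_k\to 0$; monotonicity of $\{y_k^*\}$ then propagates this to $y_k^*\to 0$.

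Finally, to conclude $\phi=\phi_\by$ it suffices to show that $c_{00}$ is dense in $\ell^1_{\widehat\eta}$, since the two functionals already agree on the span of $\{e_n\}$. For $\bs\in\ell^1_{\widehat\eta}$, let $\bs^{(N)}$ denote the truncation of $\bs$ to its $N$ coordinates of largest modulus (breaking ties arbitrarily); then $\bs-\bs^{(N)}$ has non-increasing rearrangement $(s^*_{N+1},s^*_{N+2},\ldots)$, so
\[
\|\bs-\bs^{(N)}\|_{\ell^1_{\widehat\eta}} \,=\, \sum_{j\ge 1} s^*_{j+N}\,\Delta\eta(j).
\]
Splitting at a fixed $M$, the head $\sum_{j\le M}s^*_{j+N}\,\Delta\eta(j)\le s^*_{N+1}\,\eta(M)$ tends to $0$ as $N\to\infty$ for $M$ fixed, while the tail $\sum_{j>M}s^*_{j+N}\,\Delta\eta(j)\le \sum_{j>M}s^*_j\,\Delta\eta(j)$ is the tail of the convergent series defining $\|\bs\|_{\ell^1_{\widehat\eta}}$, hence small for $M$ large. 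The main technical point to watch is precisely this density step, a two-parameter $(M,N)$ chase that also requires proper handling of ties when defining $\bs^{(N)}$; the remainder is a clean combination of rearrangement and Abel summation.
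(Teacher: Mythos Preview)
Your proof is correct and follows essentially the same route as the paper: both directions rest on Abel summation (packaged in the paper as Lemma~\ref{A2}) together with the estimate $\sum_{n\in A}|y_n|\le\|\phi\|\,\eta(|A|)$, and the hypothesis $\inf_N\eta(N)/N=0$ is used in the same way to force $\by\in c_0$. You are in fact more thorough in one respect---you explicitly verify density of $c_{00}$ in $\ell^1_{\widehat\eta}$ to conclude $\phi=\phi_\by$, a step the paper leaves implicit.
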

\begin{proof} Let ${\bf a}\in \ell^1_{\widehat\eta}$ and ${\bf b}\in m(\eta') $.
We may apply Lemma \ref{A2} with $\nu(j)=jb^*_j$ and $\xi(j)=\|{\bf b}\|_{m(\eta')}\,\heta(j)$, since $\widetilde{\nu}(n)\leq \,\widetilde{\xi}(n)$,  and conclude that
\begin{eqnarray}\sum_{j=1}^\infty |a_jb_j|&\le& \sum_{j=1}^\infty a_j^*b_j^*\,\leq\, \|{\bf b}\|_{m(\eta')}\,\sum_{j=1}^\infty a_j^*\Delta\eta(j)\;= \; \|{\bf b}\|_{m(\eta')}\|{\bf a}\|_{\ell^1_{\widehat\eta}}. \label{dualab}
\end{eqnarray}
This shows that $m(\eta')\subseteq (\ell^1_{\widehat\eta})^*$.
Conversely  let $\Phi\in (\ell^1_{\widehat\eta})^*$ and denote $b_n=\Phi({\bf e}_n)$ with $\{{\bf e}_n\}$ the standard basis in $c_{00}$. If $\e_n=\sign(b_n)$, then for each  $|A|=N$ we have
\Be\sum_{n\in A} |b_n|=|\sum_{n\in A} \bar\e_n b_n|\le \|\Phi\| \|1_{\bar\bfe A}\|_{\ell^1_{\widehat \eta}}=\|\Phi\|\; \eta(N).\label{bjA}\Ee
We claim that ${\bf b}=\{b_n\}_{n=1}^\infty\in c_0$. Indeed, if not, there would be a $\dt>0$ and a subsequence $|b_{n_j}|\geq\dt$, and \eqref{bjA} gives $\dt N \leq \sum_{j=1}^N|b_{n_j}|\le \|\Phi\|\eta(N)$, which contradicts the property $\inf_N\eta(N)/N=0$. Finally, \eqref{bjA} implies that $\sum_{j=1}^N b_j^*\le \|\Phi\|\eta(N)$, and therefore $\|{\bf b}\|_{m(\eta')}\le \|\Phi\|$. This completes the proof of the theorem.
\end{proof}

\subsection{Properties of $T_N(\eta_1,\eta_2)$}
We show elementary relations for 
\[
S_N(\eta_1,\eta_2),\quad T_N(\eta_1,\eta_2),\mand \OT_N(\eta_1,\eta_2),\]
defined in \eqref{SN}-\eqref{OTN}, and also for the quantity
\Be \label{UN}
U_N(\eta_1, \eta_2) :=
\, \sum_{j=1}^N \frac{\eta_1(j)\,\eta_2(j)}{j^2}\, ,\quad N=1,2,\ldots
\Ee

	%
	%
%
%
	%
%
%
	%
%
	
\begin{lemma} \label{Cor2.13}If $\eta_1,\eta_2\in \Wqc$ then
\Be S_N(\eta_1,\eta_2)\leq \OT_N(\eta_1, \eta_2)\le \max\{T_N(\eta_1, \eta_2),T_N(\eta_2,\eta_1)\}\le   U_N(\eta_1,\eta_2).\label{STTU}\Ee
Moreover if we assume that $i_{\eta_1} i_{\eta_2}>0$ then \Be\label{OTUN} \OT_N(\eta_1, \eta_2)\approx U_N(\eta_1,\eta_2).\Ee Finally, if $\eta_1\in\Wc$ and $i_{\eta_2}>0$ (or $\eta_2\in\Wc$ and $i_{\eta_1}>0$),  then
\Be \label{SOT}S_N(\eta_1,\eta_2)\approx \OT_N(\eta_1, \eta_2).\Ee
\end{lemma}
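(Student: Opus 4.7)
The plan is to establish \eqref{STTU} by elementary pointwise comparisons, and then to deduce both \eqref{OTUN} and \eqref{SOT} from a single Abel-summation scheme in which the dilation-index hypotheses enter exclusively through Proposition~\ref{p2} (which yields $\widetilde{\eta_k}\le C\eta_k$ when $i_{\eta_k}>0$).

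For \eqref{STTU}, first I would record the pointwise bound $\Delta\eta(j)\le \eta(j)/j$ valid for every $\eta\in\Wqc$, which follows because quasi-concavity $\eta(j)/j\le \eta(j-1)/(j-1)$ rearranges to $j\,\Delta\eta(j)\le \eta(j)$. Applying this to $\eta_1$ (respectively $\eta_2$) term-by-term inside $S_N$ gives $S_N\le T_N(\eta_1,\eta_2)$ and $S_N\le T_N(\eta_2,\eta_1)$, whence $S_N\le \OT_N$. The middle bound is the trivial $\min\le \max$, and applying $\Delta\eta_2(j)\le \eta_2(j)/j$ (respectively $\Delta\eta_1(j)\le \eta_1(j)/j$) inside each $T_N$ produces $T_N\le U_N$.

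For \eqref{OTUN}, I would apply the Abel identity \eqref{Abel2} in two parallel copies sharing the common non-increasing prefactor $a_j=\eta_1(j)/j$ (non-increasing precisely because $\eta_1\in\Wqc$). With $\eta_2(0)=\widetilde{\eta_2}(0)=0$, writing $\Delta\eta_2(j)=\eta_2(j)-\eta_2(j-1)$ and $\eta_2(j)/j=\widetilde{\eta_2}(j)-\widetilde{\eta_2}(j-1)$, \eqref{Abel2} yields
\[T_N(\eta_1,\eta_2)=a_N\eta_2(N)+\sum_{j=1}^{N-1}(a_j-a_{j+1})\,\eta_2(j),\qquad U_N=a_N\widetilde{\eta_2}(N)+\sum_{j=1}^{N-1}(a_j-a_{j+1})\,\widetilde{\eta_2}(j),\]
with all coefficients $a_N$ and $a_j-a_{j+1}$ non-negative. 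Since $i_{\eta_2}>0$, Proposition~\ref{p2} gives $\widetilde{\eta_2}(j)\le C\eta_2(j)$ pointwise, and termwise comparison yields $U_N\le C\,T_N(\eta_1,\eta_2)$. The symmetric argument with $a_j=\eta_2(j)/j$ and $i_{\eta_1}>0$ gives $U_N\le C\,T_N(\eta_2,\eta_1)$; taking the minimum and combining with $\OT_N\le U_N$ from \eqref{STTU} proves \eqref{OTUN}.

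For \eqref{SOT}, I would run exactly the same scheme with $a_j=\Delta\eta_1(j)$, which is non-increasing because $\eta_1\in\Wc$. The same two Abel expansions now read
\[S_N=\Delta\eta_1(N)\,\eta_2(N)+\sum_{j=1}^{N-1}[\Delta\eta_1(j)-\Delta\eta_1(j+1)]\,\eta_2(j),\]
\[T_N(\eta_2,\eta_1)=\Delta\eta_1(N)\,\widetilde{\eta_2}(N)+\sum_{j=1}^{N-1}[\Delta\eta_1(j)-\Delta\eta_1(j+1)]\,\widetilde{\eta_2}(j),\]
with all coefficients non-negative, and invoking $\widetilde{\eta_2}\le C\eta_2$ once more gives $T_N(\eta_2,\eta_1)\le C\,S_N$. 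Combined with $S_N\le \OT_N\le T_N(\eta_2,\eta_1)$ from \eqref{STTU}, this yields $S_N\approx \OT_N$. The symmetric case ($\eta_2\in\Wc$, $i_{\eta_1}>0$) is identical after swapping roles.

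The main conceptual step is spotting the common structure: once a suitable non-increasing prefactor $a_j$ (either $\eta_1(\cdot)/\cdot$ under quasi-concavity, or $\Delta\eta_1$ under full concavity) is extracted, the three sums $T_N$, $U_N$, $S_N$ all share the same Abel decomposition and differ only through whether the accompanying weight is $\eta_2$ or its summed version $\widetilde{\eta_2}$. Proposition~\ref{p2} then converts one into the other, and no further estimate is needed.
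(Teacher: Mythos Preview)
Your proof is correct and follows essentially the same route as the paper. The only packaging difference is that the paper invokes Corollary~\ref{p1bis}(ii) (the embedding $\ell^1_{\widehat\eta}\hookrightarrow\ell^1_\eta$ when $i_\eta>0$) to pass from $U_N$ to $T_N$ and from $T_N$ to $S_N$, whereas you unroll that corollary's proof inline: the Abel summation with a non-increasing prefactor, followed by the pointwise bound $\widetilde{\eta_2}\le C\eta_2$ from Proposition~\ref{p2}, is precisely the content of Lemma~\ref{A2} and Corollary~\ref{p1bis}(ii).
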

\Proof
The assertion \eqref{STTU} follows easily using that $\Delta \eta(j)\le {\eta(j)}/{j}$ when $\eta\in\Wqc$. If $i_{\eta_2}>0$, we can apply Corollary \ref{p1bis}.ii  to obtain $$U_N(\eta_1,\eta_2)= \|\{\eta_1(j)/j\}_{j=1}^N\|_{\ell^1_{\eta_2}}\le c\,\|\{\eta_1(j)/j\}_{j=1}^N\|_{\ell^1_{\heta_2}}=c\, T_N(\eta_1,\eta_2). $$
If $i_{\eta_1}>0$, a symmetric argument gives $U_N(\eta_1,\eta_2)\leq cT_N(\eta_2,\eta_1)$, and hence \eqref{OTUN}.	
Finally, if $\eta_1\in\Wc$ and $i_{\eta_2}>0$, then Corollary \ref{p1bis}.ii  gives
\Be T_N(\eta_2,\eta_1)=  \|\{\Delta\eta_1(j)\}_{j=1}^N\|_{\ell^1_{\eta_2}}\le c\,\|\{\Delta\eta_1(j)\}_{j=1}^N\|_{\ell^1_{\heta_2}}=\,c\,S_N(\eta_1,\eta_2),\label{TNSN}\Ee
which together with \eqref{STTU} gives \eqref{SOT}. A similar reasoning works interchanging $\eta_1$ and $\eta_2$.
\ProofEnd	
\BE
If $\eta_1(j) =j$ and $\eta_2(j)=1$ for all $j=1,2,3, \dots.$ Then, $i_{\eta_1}=1$, $i_{\eta_2}=0$ and  $$ S_N(\eta_1,\eta_2)= T_N(\eta_1, \eta_2)=1,   \quad T_N(\eta_2,\eta_1)= U_N(\eta_1, \eta_2)  \approx \ln(N+1)\,.$$
Hence,  \eqref{OTUN} may not hold if $i_{\eta_2}=0$.
\EE

%

\begin{lemma}\label{etaxi}
Let $\eta_1,\eta_2,\xi_2$ be non-negative sequences with 
$\eta_2\leq\xi_2$.
	\sline (i)  If  $\eta_1\in\Wqc$, then $T_N(\eta_1,\eta_2)\leq T_N(\eta_1,\xi_2)$.
	\sline (ii) If $\eta_1\in\Wc$, then $S_N(\eta_1,\eta_2)\leq S_N(\eta_1,\xi_2)$.
	\end{lemma}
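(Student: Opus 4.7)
The plan is, in both parts, to apply the Abel summation formula \eqref{Abel2} to transfer the difference operator $\Delta$ off of $\eta_2$, producing an expression in which $\eta_2$ appears linearly with coefficients that depend only on $\eta_1$. The assumptions $\eta_1\in\Wqc$ (resp.\ $\eta_1\in\Wc$) will guarantee that those coefficients are non-negative, and the conclusion will then follow at once from $\eta_2\leq\xi_2$ and $\xi_2\geq 0$.

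For (i), I would set $a_j=\eta_1(j)/j$ and use \eqref{Abel2} with $d_n=\eta_2(n)$ (remembering $\eta_2(0)=0$) to obtain
\[
T_N(\eta_1,\eta_2)\;=\;\frac{\eta_1(N)}{N}\,\eta_2(N)\;+\;\sum_{j=1}^{N-1}\Bigl(\frac{\eta_1(j)}{j}-\frac{\eta_1(j+1)}{j+1}\Bigr)\,\eta_2(j).
\]
Since $\eta_1\in\Wqc$, every bracketed term is non-negative, and of course $\eta_1(N)/N\geq 0$. Replacing each $\eta_2(j)$ by the larger non-negative value $\xi_2(j)$ and reading the identity back for $T_N(\eta_1,\xi_2)$ yields the claim.

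For (ii), I would instead take $a_j=\Delta\eta_1(j)$ and again apply \eqref{Abel2} with $d_n=\eta_2(n)$ to get
\[
S_N(\eta_1,\eta_2)\;=\;\Delta\eta_1(N)\,\eta_2(N)\;-\;\sum_{j=1}^{N-1}\Delta^2\eta_1(j+1)\,\eta_2(j).
\]
Since $\eta_1\in\Wc\subset\SW$, we have $\Delta\eta_1(N)\geq 0$, and the concavity condition \eqref{Wc} gives $-\Delta^2\eta_1(j+1)\geq 0$ for $j\geq 1$. Thus all coefficients multiplying $\eta_2(j)$ are non-negative, and monotonicity in $\eta_2$ (using $\eta_2\leq\xi_2$) delivers $S_N(\eta_1,\eta_2)\leq S_N(\eta_1,\xi_2)$.

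There is no real obstacle here: the whole argument is a direct application of summation by parts. The only point needing a little attention is the convention $\eta_2(0)=0$, which ensures the boundary term at $j=0$ in the Abel identity vanishes and that the telescoping identification with $S_N$ and $T_N$ goes through cleanly; once that is noted, the hypotheses on $\eta_1$ are exactly what is required for the rearranged coefficients to be non-negative.
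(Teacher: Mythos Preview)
Your proof is correct and follows exactly the same route as the paper: apply the Abel summation formula \eqref{Abel2} to move $\Delta$ off of $\eta_2$, then use quasi-concavity (resp.\ concavity) of $\eta_1$ to ensure the resulting coefficients are non-negative, so that $\eta_2\leq\xi_2$ immediately gives the inequality. The paper only writes out part (i) explicitly and declares (ii) ``similar''; your computation for (ii) is precisely the intended one.
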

\Proof (i) is elementary using Abel's formula \eqref{Abel2}:
\[
T_N(\eta_1,\eta_2)=\sum_{j=1}^{N-1}\Big[\frac{\eta_1(j)}j-\frac{\eta_1(j+1)}{j+1}\Big]\,\eta_2(j)+\frac{\eta_1(N)}N\eta_2(N)\leq T_N(\eta_1,\xi_2),
\]
since $\eta_1\in\Wqc$ and $\eta_2\leq \xi_2$. The proof of (ii) is similar.
\ProofEnd

\section{Embeddings of discrete spaces into $\SX$} \label{embeddings-1}

\subsection{Proof of Theorem \ref{Th1}}
	The implication $ii) \Rightarrow i)$ is clear since
	$$
	\|\bone_{\bfe A}\| \leq \big\|\{\e_j\}_{j\in A}\big\|_{\ell^1_\heta}=
	\sum_{j=1}^{|A|} \Dt\eta(j) = \eta(|A|)\,.
	$$
	
We now show that $i)\Rightarrow ii)$. Let $\textbf{a}\in c_{00}$ and $N=|\supp\textbf{a}|$. Write $a_j^* = \vert a_{\pi(j)}\vert$, where $\pi:\lbrace 1,...,N\rbrace\rightarrow \supp\textbf{a}$ is a greedy bijection, that is $\vert a_{\pi(j)}\vert \geq \vert a_{\pi(j+1)}\vert$, $j=1,2,...$ Let also $\varepsilon_j = \sign(a_{\pi(j)})$.
If we define \Be
	\displaystyle S_J = \sum_{j=1}^J \varepsilon_j \be_{\pi(j)}\label{SJ}\Ee
(and $S_0=0$), then using Abel summation formula (\ref{Abel2})  we can write
	$$\sum_{n\in \supp\textbf{a}} a_n \be_n = \sum_{j=1}^Na^*_j\e_j\be_{\pi(j)}=\sum_{j=1}^N a_j^* (S_j -S_{j-1}) = \sum_{j=1}^{N-1} (a_j^* - a_{j+1}^*)S_j + a_N^* S_N.$$
	Then, by assumption $i),$
	\begin{eqnarray}
		\Big\Vert \sum_{n\in\supp\textbf{a}}a_n \be_n\Big\Vert &\leq& \sum_{j=1}^{N-1} (a_j^* - a_{j+1}^*)\Vert S_j\Vert + a_N^*\Vert S_N\Vert \leq \sum_{j=1}^{N-1} (a_j^* - a_{j+1}^*)\eta(j) + a_N^* \eta(N)\nonumber\\
		&=& a_1^* \eta(1) + \sum_{j=2}^{N} a_j^* (\eta(j)-\eta(j-1)) = \Vert \textbf{a}\Vert_{\ell^1_{\widehat\eta}},\label{a_suppN}
	\end{eqnarray}
which is the desired result.
	
	The implication $iii)\Rightarrow ii)$ is immediate, so  it remains to prove $i) \Rightarrow iii)$ under the assumption that the system is total. Let $\textbf{a} \in \ell^1_{\widehat\eta}$,
	which we shall assume with infinite support (otherwise we may use \eqref{a_suppN}).
	As before, write $a_j^* = \vert a_{\pi(j)}\vert$ where $\pi : \SN \rightarrow \supp\textbf{a}$ is a greedy bijection, and $\e_j = \sign(a_{\pi(j)})$.
Letting $S_J$ be as in \eqref{SJ}, we have
\begin{eqnarray*}
		\sum_{j=1}^J (a_j^* - a_{j+1}^*)\Vert S_j\Vert &\leq& \sum_{j=1}^J (a_j^* - a_{j+1}^*)\eta(j)\\
\mbox{{\footnotesize (by \eqref{Abel2})}}		&=& a_1^* \eta(1) + \sum_{j=2}^{J+1} a_j^*[\eta(j)-\eta(j-1)] -
a_{J+1}^*\eta(J+1)\\
		&\leq & \sum_{j=1}^\infty a_j^*\Delta\eta(j) = \Vert \textbf{a}\Vert_{\ell_{\heta}^1}<\infty.
	\end{eqnarray*}	
Therefore, the	series  $\sum_{j=1}^\infty (a_j^* - a_{j+1}^*)S_j $ converges to some $x\in \mathbb{X}$ and $\|x\|\le \|\textbf{a} \|_{\ell^1_{\widehat\eta}}$. It only remains to show that \Be
\be^*_n(x)=a_n,\quad \forall\; n\in\SN.\label{enx}\Ee If $n\not\in \supp\textbf{a}$ then
$\be_n^*(S_j)=0$ for all $j$, and thus $\be_n^*(x)=0$.
Let then $n\in \supp\textbf{a}$, and write $n = \pi(j_n)$, so that
	\begin{eqnarray*}
		\be_n^*(x) &=& \lim_{J\rightarrow\infty} \be_n^*\left( \sum_{j=1}^J (a_j^* - a_{j+1}^*)S_j\right) = \lim_{J\rightarrow\infty} \sum_{j=j_n}^J (a_j^* - a_{j+1}^*)\varepsilon_{j_n}\\
		&=& \lim_{J\rightarrow\infty} (a_{j_n}^* - a_{J+1}^*)\varepsilon_{j_n} = a_{j_n}^*\varepsilon_{j_n}=a_{\pi(j_n)}=a_n,
	\end{eqnarray*}
	where we have used that $\textbf{a}\in c_0$.
	Finally, there is a unique element $x$ with the property \eqref{enx} by the totality of the system $\cB^*$.
This shows that $\ell^1_\heta \stackrel{\cB,1}\hookrightarrow \SX$, and completes the proof of the theorem.
\ProofEnd


\BR
	The statement of Theorem \ref{Th1} resembles a well known property of the classical Lorentz spaces $L^{p,1}$. Namely, if $\|\cdot\|$ is an order preserving norm defined on the set $\mathcal{S}$ of all simple functions of a measure space $(\Omega,\Sigma, d\mu)$, then the inequality $\|\chi_E\| \leq \mu(E)^{1/p}$ for all $E\in\Sigma$, implies that $\|f\| \leq \|f\|_{L^{p,1}(\mu)}$ for all $f\in\mathcal{S}$; see \cite[Thm V.3.11]{SW1971}.
\ER

\BR In the special setting of quasi-greedy bases, a result similar to Theorem \ref{Th1} was proved earlier by the fourth author in \cite[Lemma 2.1]{H2011}. More precisely, if $\cB$ is quasi-greedy in $\SX$ and $\eta\in\Wd$ is such that $\|\bone_A\| \leq \eta(|A|),$ then $\ell_\eta^1\hookrightarrow \SX$ via $\cB$. Theorem \ref{Th1}  actually shows that one can choose a better space, since $\ell_{\eta}^1 \subset \ell^1_{\widehat\eta}$. See also \cite[Theorem 3.1]{AA2016}.
\ER

\

\section{Embeddings of $\SX$ into discrete spaces} \label{embeddings-2}

%

\subsection{Proof of Theorem \ref{Th2}}
	$(i) \Rightarrow (ii).$ For $x\in\SX$, write $a_j^*(x)=\vert \be_{\pi(j)}^*(x)\vert$, where $\pi$ is a greedy permutation onto $\supp x$, that is, $\vert \be_{\pi(j)}^*(x)\vert \geq \vert \be_{\pi(j+1)}^*(x)\vert$, $j=1,2,...$. We also let $\varepsilon_j = \sign(\be_{\pi(j)}^*(x))$, $j=1,2...$ Then
	\Beas
	\frac{\eta'(N)}{N}\sum_{j=1}^N a_j^*(x) &=&
	\frac{1}{\eta(N)}\sum_{j=1}^N a_j^*(x) =\frac{1}{\eta(N)}\Big(\sum_{j=1}^N \bar\e_j \be_{\pi(j)}^*\Big)(x) \\
	&\leq & \frac{1}{\eta(N)}\Vert \sum_{j=1}^N \bar\e_j \be_{\pi(j)}^*\Vert_*\Vert x\Vert \leq \Vert x\Vert.
	\Eeas
	
	$(ii) \Rightarrow (i).$  Let $A \subset \SN$ be a finite set  and $\bfe \in \cY.$ Then,
\Be
	\|\bone^*_{\bfe A}\|_* = \sup_{\|x\|=1}  |\bone^*_{\bfe A}( x)|  = \sup_{\|x\|=1} |\sum_{j\in A} \e_j\,\be_j^*(x)|\,.
\label{aux_thm2}\Ee
Now, given $x\in\SX$, and denoting $\lbrace a_j^*(x)\rbrace_j$ as in the proof of the previous implication,
we have \[
 \Big|\sum_{j\in A} \e_j\,\be_j^*(x)\Big| \leq \sum_{j\in A} |\be_j^*(x)| \leq \sum_{j=1}^{|A|} a_j^*(x)
\leq \eta(|A|)\|x\|,\]
with the last inequality due to the assumption (ii). Inserting this estimate into \eqref{aux_thm2} gives
the desired expression (i).
\ProofEnd

\BR
In the setting of quasi-greedy bases,
a different version of Theorem \ref{Th2} involving lower democracy function $h_\ell$ was
proved in \cite[Lemma 2.2]{H2011}. Namely, $\SX\hookrightarrow \ell^
\infty_{h_\ell}$; see also \cite[Theorem 3.1]{AA2016}.
Such embedding, however, cannot hold for general bases. For instance, consider the space $\SX$ of all sequences $\textbf{a}=\{a_n\}_{n=1}^\infty\in c_0$ with
$$
\|\textbf{a}\| := \sup_{M\geq 1} \Big|\sum_{n=1}^M a_n\Big| < \infty\,,
$$
with the standard canonical basis $\{\be_n\}$.
Then, $h_\ell(N) = \inf_{|A|=N}\|\bone_A\|=N$. However, the embedding $\SX \hookrightarrow \ell_{h_\ell}^\infty$ cannot hold since $\ba=\{ (-1)^n\}_{n=1}^N$ belongs to $\SX$ with $\|\ba\|=1$, but $\sup_n n a_n^* = N\to \infty.$
\ER

\section{Proof of Theorem \ref{Th3}} \label{estimates}

The results we prove here are slightly stronger than those announced in Theorem \ref{Th3}.
Throughout this section, the sequences $\eta_1,\eta_2\in {\mathbb{W}}$ are such that
\Be
 (1)\quad\|\bone_{\bfe A}\| \leq \eta_1(N) \qquad \mbox{and} \qquad (2) \quad \|\bone_{\bfe A}^*\|_* \leq \eta_2(N) ,
\quad\forall\;|A|=N,\;\forall\;\bfe\in\cY.
\label{1A}\Ee
As noted above, these inequalities are satisfied for $\eta_1 = D$ and $\eta_2 = D^*$.

\subsection{Estimates for $K_N$}
Instead of estimating $K_N$, we work with the larger quantity
\[
\bKN=\sup_{{|A|\leq N}\atop{\bfe\in\cY}}\|P_{\bfe A}\|,
\]
where $P_{\bfe A}x=\sum_{n\in A}\e_n\be^*_n(x)\be_n$. 

\begin{lemma}\label{est for K_N}
Suppose the sequences $\eta_1,\eta_2 \in {\mathbb{W}}$ satisfy \eqref{1A}. Then:
\sline (i)
If $\eta_1 \in \Wqc$, then $\bKN \leq \OT_N(\eta_1, \eta_2)$.
 \sline (ii)
If $\eta_1 \in \Wc$, then $\bKN \leq S_N(\eta_1, \eta_2)$.

\end{lemma}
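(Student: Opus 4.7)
The plan is to estimate $\|P_{\bfe A}(x)\|$ for fixed $x\in\SX$, $|A|=M\leq N$, and $\bfe\in\cY$. First I would rearrange the coefficients in $P_{\bfe A}(x) = \sum_{n\in A}\e_n\be_n^*(x)\be_n$ in decreasing order of magnitude: let $\pi\colon\{1,\ldots,M\}\to A$ be a greedy bijection, set $a_j^* = |\be_{\pi(j)}^*(x)|$ and $\tau_j=\sign(\be_{\pi(j)}^*(x))\,\e_{\pi(j)}$, and put $S_J:=\sum_{j=1}^J\tau_j\be_{\pi(j)}$. Hypothesis (1) of \eqref{1A} gives $\|S_J\|\leq\eta_1(J)$, and two applications of the Abel summation formula \eqref{Abel2} (exactly as in the proof of Theorem \ref{Th1}) reduce the vector inequality to the scalar estimate
\[
\|P_{\bfe A}(x)\|\;\leq\;\sum_{j=1}^M a_j^*\,\Delta\eta_1(j).
\]

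Next, the control of the rearranged coefficients comes from hypothesis (2) via Theorem \ref{Th2}, which yields the embedding $\SX\stackrel{\cB,\,1}\hookrightarrow m(\eta_2')$; in other words, the partial sums $b_k:=\sum_{j=1}^k a_j^*$ satisfy $b_k\leq\eta_2(k)\|x\|$ for $k=1,\ldots,M$, and a fortiori $a_j^*\leq\eta_2(j)/j\cdot\|x\|$ since $\{a_j^*\}$ is non-increasing.

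For part (ii), where $\eta_1\in\Wc$, I would substitute $a_j^*=b_j-b_{j-1}$ into the scalar bound and apply Abel once more. Concavity of $\eta_1$ makes the resulting weights $\Delta\eta_1(j)-\Delta\eta_1(j+1)$ non-negative, so the termwise replacement $b_j\leq\eta_2(j)\|x\|$ is legitimate; undoing the Abel transform recovers exactly $S_M(\eta_1,\eta_2)\|x\|\leq S_N(\eta_1,\eta_2)\|x\|$.

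For part (i), where $\eta_1\in\Wqc$, I would argue along two independent routes and take the smaller outcome. Route one: the pointwise bound $a_j^*\leq\eta_2(j)/j\cdot\|x\|$ inserted directly into the scalar inequality yields $T_M(\eta_2,\eta_1)\|x\|$. Route two: using the defining property $\Delta\eta_1(j)\leq \eta_1(j)/j$ of $\Wqc$, the scalar bound is dominated by $\sum_j a_j^*\,\eta_1(j)/j$; since the ratios $\eta_1(j)/j$ are non-increasing, the same Abel-and-substitute scheme as in part (ii) applies, now producing $T_M(\eta_1,\eta_2)\|x\|$. Taking the minimum yields $\OT_M(\eta_1,\eta_2)\leq\OT_N(\eta_1,\eta_2)$. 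The main subtle point---and the reason why (ii) is sharper than (i)---is that the Abel-and-substitute scheme needs the relevant weight sequence to be non-increasing; concavity gives this for $\Delta\eta_1(j)$ directly, while quasi-concavity only gives it for $\eta_1(j)/j$, and the intermediate loss $\Delta\eta_1(j)\leq\eta_1(j)/j$ is precisely what inflates $S_N$ into $T_N$.
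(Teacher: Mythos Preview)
Your proof is correct and follows essentially the same route as the paper: bound $\|P_{\bfe A}x\|$ by $\sum_j a_j^*\,\Delta\eta_1(j)$ via Theorem~\ref{Th1}, then feed in the partial-sum bound $\sum_{j\leq k}a_j^*\leq\eta_2(k)\|x\|$ from Theorem~\ref{Th2} through an Abel summation whose weights are non-negative by (quasi-)concavity of $\eta_1$. The only cosmetic differences are that the paper passes from $|A|$ to $N$ terms at the outset (via $a_j^*(P_Ax)\leq a_j^*(x)$) rather than invoking $S_M\leq S_N$ and $\OT_M\leq\OT_N$ at the end, and that for the $T_N(\eta_1,\eta_2)$ half of (i) it replaces $\eta_1$ by its concave majorant $\widetilde{\eta_1}$ and cites (ii)---which, since $\Delta\widetilde{\eta_1}(j)=\eta_1(j)/j$, unravels to exactly your Route two.
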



\begin{proof}
 Given any $x\in\SX$, we denote by $\{a_j^*(x)\}$ the decreasing rearrangement of $\{\be^*_n(x)\}$,
that is, $a_j^*(x)=\vert \be_{\pi(j)}^*(x)\vert$, where $\pi$ is a greedy bijection onto $\supp x$;
see the proof of the Theorem \ref{Th2}.
If $|A|\leq N$ and $\bfe\in\cY$, then part (1) of \eqref{1A} and the implication $i) \Rightarrow ii)$
of Theorem \ref{Th1} imply
\Be
\|P_{\bfe A}x\| \leq  \sum_{j=1}^{|A|} a_j^*(P_{\bfe A}x)\Delta\eta_1(j)\,\le  \sum_{j=1}^N a_j^*(x)\Dt\eta_1(j)\,=:A_N(x),
\label{first}\Ee
the last inequality due to $a_j^*(P_{\bfe A}x)=a^*_j(P_Ax) \leq a_j^*(x)$ (and $\eta_1\in\SW$).

We start by proving $(ii)$. Denoting $S_J(x) =\sum_{j=1}^J a_j^*(x)$, and using the Abel summation formula \eqref{Abel2}
\Bea \label{Eq-5-1}
A_N(x) &=&   S_1(x)\,\eta_1(1)+\sum_{j=2}^{N} [S_j(x) - S_{j-1}(x)]\Dt \eta_1(j)  \nonumber \\
&=& \sum_{j=1}^{N-1} \Big[\Dt\eta_1(j) - \Dt\eta_1(j+1)\Big] S_j(x) + \Dt\eta_1(N) S_N(x) \,.
\Eea
Now, the inequality (2) in \eqref{1A} and $i) \Rightarrow ii)$ of Theorem \ref{Th2} imply that
\Be\frac{1}{\eta_2(j)} S_j(x) = \frac{\eta_2'(j)}{j} \sum_{k=1}^j a_k^*(x)\leq \|x\|\,,\quad j =1,2, \dots
\label{insert}\Ee
Since $\eta_1\in\Wc$, we may insert in  (\ref{Eq-5-1}) the inequalities for $S_j(x)$ in (\ref{insert}), and then another use of (\ref{Abel2}) gives,
\Beas \label{PAco2}
A_N(x) &\leq& \Big[\sum_{j=1}^{N-1} [\Delta \eta_1(j) - \Delta \eta_1(j+1)] \eta_2(j) + \Delta \eta_1(N) \eta_2(N) \Big] \|x\|  \nonumber \\
&=& \sum_{j=1}^{N}  \Delta \eta_1(j) \Delta \eta_2(j)\;\|x\| = S_N(\eta_1,\eta_2)\; \|x\|\,.
\Eeas
Plug this into \eqref{first} to obtain the desired estimate for $\bKN$.

To prove $(i)$ assume $\eta_1 \in \Wqc$. Then $\eta_1 \leq \widetilde {\eta_1}$, so that \eqref{1A} holds with $\eta_1$ replaced by $\widetilde {\eta_1}.$
Since $\widetilde{\eta_1}\in \Wc$ (see (\ref{e2})), by part $(ii)$ of this Lemma (just proved)
$$
\bKN\leq S_N(\widetilde{\eta_1}, \eta_2) = \sum_{j=1}^N \frac{\eta_1(j)}{j} \Delta \eta_2(j) = T_N(\eta_1, \eta_2)\,.
$$
On the other hand, observe that in \eqref{first} we could also argue  as follows
\Beas
A_N(x) & =&   \sum_{j=1}^{N} a_j^*(x)\Delta\eta_1(j)\,\le
\sup_{k\in\SN}\big[\tfrac{k}{\eta_2(k)}a^*_k(x)\big]\,\sum_{j=1}^N \frac{ \eta_2(j)}{j}\,\Dt\eta_1(j)\\
& \leq & \|\{a^*_j(x)\}\|_{m(\eta'_2)}\,T_N(\eta_2,\eta_1)\;\leq\;\|x\|\,T_N(\eta_2,\eta_1),
\Eeas
the last inequality due to (2) in (\ref{1A}) and $(i)\Rightarrow(ii)$ in Theorem \ref{Th2}. Thus, we have shown that \eqref{1A} implies
$$
\bKN\leq \min\big\{T_N(\eta_1,\eta_2),\,T_N(\eta_2,\eta_1)\big\}\,=:\,\OT_N(\eta_1,\eta_2) .
$$
\end{proof}

\subsection{Estimates for $\LN$}
\begin{lemma}\label{est for L_N}
Suppose the sequences $\eta_1,\eta_2 \in {\mathbb{W}}$ satisfy \eqref{1A}. Then:
\sline (i)
If $\eta_1 \in \Wqc$, then $\LN \leq 1 + 3\OT_N(\eta_1, \eta_2)$.
\sline (ii)
If $\eta_1 \in \Wc$, then $\LN \leq 1 + 3 S_N(\eta_1, \eta_2)$.
\end{lemma}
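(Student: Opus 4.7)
The plan is to follow the classical comparison strategy, leveraging Lemma \ref{est for K_N} and reusing the quantitative estimates developed inside its proof. Fix $x\in\SX$, a greedy operator $G_Nx=P_Ax$ with $A\in\mathcal{G}(x,N)$, and any competitor $z=\sum_{n\in B}c_n\be_n$ with $|B|\leq N$. Setting $y=x-z$, I would decompose
$$x-G_Nx=(I-P_A)y\,+\,(I-P_A)z.$$
The first summand is immediately controlled by $(1+K_N)\|y\|$ via the triangle inequality. For the second, since $z$ is supported on $B$ we have $(I-P_A)z=\sum_{n\in B\setminus A}c_n\be_n$, and substituting $c_n=\be_n^*(x)-\be_n^*(y)$ decomposes it as
$$\sum_{n\in B\setminus A}\be_n^*(x)\,\be_n\;-\;P_{B\setminus A}\,y,$$
where the projection contributes at most $K_N\|y\|$.

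The crux is to estimate the remaining term $\bignorm{\sum_{n\in B\setminus A}\be_n^*(x)\be_n}$. I would invoke Theorem \ref{Th1} under the hypothesis $\|\mathbf{1}_{\bfe A}\|\leq\eta_1(|A|)$ to obtain
$$\Big\|\sum_{n\in B\setminus A}\be_n^*(x)\be_n\Big\|\,\leq\,\sum_{j=1}^{|B\setminus A|}u_j^*\,\Delta\eta_1(j),$$
where $\{u_j^*\}$ is the decreasing rearrangement of $\{|\be_n^*(x)|\}_{n\in B\setminus A}$. The greedy property of $A$ forces $|\be_n^*(x)|\leq|\be_m^*(x)|$ whenever $n\in B\setminus A$ and $m\in A\setminus B$; combined with the cardinality inequality $|A\setminus B|\geq|B\setminus A|$ (a consequence of $|B|\leq N=|A|$), this yields the termwise domination $u_j^*\leq v_j^*$, where $v_j^*$ is the decreasing rearrangement of $\{|\be_n^*(x)|\}_{n\in A\setminus B}$. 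Since $A\setminus B\subset B^c$, on this set $\be_n^*(x)=\be_n^*(y)$, so $v_j^*\leq a_j^*(y)$ (the global rearrangement of $\{\be_n^*(y)\}$). The resulting majorant $\sum_{j=1}^N a_j^*(y)\Delta\eta_1(j)$ is precisely the quantity $A_N(y)$ appearing in \eqref{first} of Lemma \ref{est for K_N}, whose proof already shows $A_N(y)\leq\OT_N(\eta_1,\eta_2)\|y\|$ in case (i) and $A_N(y)\leq S_N(\eta_1,\eta_2)\|y\|$ in case (ii).

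Assembling the pieces yields
$$\|x-G_Nx\|\,\leq\,(1+2K_N+\OT_N)\,\|y\|$$
in case (i), and the analogous bound with $S_N$ in case (ii). Applying Lemma \ref{est for K_N} to replace $K_N$ by $\OT_N$ (respectively $S_N$), and then taking the infimum over $z$ with $|\supp z|\leq N$, delivers the announced inequalities for $\LN$. The step I expect to require the most care is the decreasing-rearrangement comparison between the coefficient sequences on $B\setminus A$ and $A\setminus B$: one must simultaneously use the pointwise greedy inequality and the cardinality bound $|A\setminus B|\geq|B\setminus A|$ to secure termwise domination, which is what ultimately makes the embedding estimate collapse to the already-proven bound on $A_N(y)$.
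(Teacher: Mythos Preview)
Your proposal is correct and follows essentially the same approach as the paper: your decomposition $(I-P_A)y - P_{B\setminus A}y + P_{B\setminus A}x$ is algebraically identical to the paper's $(I-P_{A\cup B})y + P_{B\setminus A}x$, and your estimate of $\|P_{B\setminus A}x\|$ via Theorem~\ref{Th1}, the greedy comparison between $B\setminus A$ and $A\setminus B$, and the switch to $y=x-z$ mirrors the paper's treatment of term $II$ exactly. The only cosmetic difference is that the paper groups the ``easy'' terms as $(1+2K_N)\|x-z\|$ in one step, whereas you obtain $(1+K_N)+K_N$ from two separate pieces.
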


\begin{proof}
We follow the standard approach in \cite{KT}. Let $x\in \SX$ and  write $G_Nx=P_\Gamma x$ for some $\Gamma \in \mathcal G(x,N)$. Take any $z=\sum_{n\in B}c_n\be_n$ with $|B|\leq N$. Then,
\Bea \label{Eq-5-3}
\|x - G_Nx\| &=& \|x-P_{B\cup \Gamma}(x)+P_{B\setminus \Gamma}(x) \| \nonumber \\
& \leq &  \|P_{(B\cup \Gamma)^c}(x)\|+\|P_{B\setminus \Gamma}(x)\| =: I + II\,.
\Eea
For the first term we use that $P_{(B\cup \Gamma)^c}(x)=P_{(B\cup \Gamma)^c}(x-z)$, and therefore
\Bea \label{Eq-5-4}
I =\|(I-P_{B\cup \Gamma})(x-z)\|& \leq &  \|x-z\| + \|P_B(x-z)\| + \|P_{\Gamma\setminus B}(x-z)\| \nonumber \\
&\leq&  (1+2K_N)\|x-z\|\,.
\Eea
To estimate $II$ we proceed as follows. First, using \eqref{1A} and $ i) \Rightarrow ii)$ in Theorem \ref{Th1},
$$
II = \|P_{B\setminus \Gamma}(x)\| \leq 
\sum_{j=1}^{|B\setminus \Gamma|} a_j^*(P_{B\setminus \Gamma}(x)) \Delta \eta_1(j)\,\,\leq\,
\sum_{j=1}^{|\Gamma\setminus B|} a_j^*(P_{\Gamma\setminus B}(x)) \Delta \eta_1(j)\,,
$$
where in the last step  we have used that $\Gamma$ is a greedy set for $x$ and
$|B\setminus \Gamma| \leq |\Gamma \setminus B|$.
Now, $P_{\Gamma\setminus B}(x)=P_{\Gamma\setminus B}(x-z)$, and we may
use that $a_j^*(P_{\Gamma\setminus B}(x-z)) \leq a_j^*(x-z)$ to conclude
$$
II = \|P_{B\setminus \Gamma}(x)\| \leq 
\sum_{j=1}^{|\Gamma\setminus B|} a_j^*(x-z) \Delta \eta_1(j) \,.
$$
The right hand side resembles that of \eqref{first}, with
$A_N(x)$ replaced by $A_{|\Gamma\setminus B|}(x-z)$.
We estimate $A_{|\Gamma\setminus B|}(x-z)$ as in Lemma \ref{est for K_N}.
For $\eta_1\in\Wc$ (case $(ii)$), we obtain
\Bea \label{Eq-5-5}
II = \|P_{B\setminus \Gamma}(x)\|\, \leq  \, S_N(\eta_1, \eta_2)\,\|x-z\|.
\Eea
Thus, combining (\ref{Eq-5-3}), (\ref{Eq-5-4}), and (\ref{Eq-5-5}), together with Lemma \ref{est for K_N} ($ii$),
we are led to
\Beas
\| x - G_Nx\| & \leq & (1+2K_{N}+ S_N(\eta_1,\eta_2))\,\|x-z\|\\
& \le &  (1+ 3\, S_N(\eta_1,\eta_2))\|x-z\|\,.
\Eeas
Taking the infimum over all such $z$ we finally obtain
$\LN\leq 1+ 3\, S_N(\eta_1,\eta_2)$.

For $\eta_1\in\Wqc$ (case $(i)$), we modify the preceding argument
(as we did in the proof of Lemma \ref{est for K_N} $(i)$) to obtain $\LN\leq 1+ 3\,\OT(\eta_1,\eta_2)$.
%
%
\end{proof}

\

\subsection{Estimates for $\tLN$}
\begin{lemma}\label{est for tL_N}
Suppose the sequences $\eta_1,\eta_2 \in {\mathbb{W}}$ satisfy \eqref{1A}. Then:
\sline (i)
If $\eta_1 \in \Wqc$, then $\tLN \leq 1 + 2 \OT_N(\eta_1, \eta_2)$.
\sline (ii)
If $\eta_1 \in \Wc$, then $\tLN \leq 1 + 2 S_N(\eta_1, \eta_2)$.
\end{lemma}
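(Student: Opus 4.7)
The plan is to mimic the proof of Lemma~\ref{est for L_N}, exploiting that in the expansional framework the competitor is forced to be a coordinate projection $z = P_B x$ with $|B| \leq N$, rather than an arbitrary vector supported on $B$. The extra rigidity produces the cancellation $P_B(x - z) = 0$, which removes one of the two $K_N$ summands appearing in the $\LN$-bound and thereby yields the constant $2$ in front of $\OT_N$ (resp.\ $S_N$) instead of $3$.

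Fix $x\in\SX$, a greedy set $\Gamma \in \mathcal G(x,N)$, write $G_Nx=P_\Gamma x$, and for $|B|\leq N$ set $z=P_B x$. Using $P_{B\cup\Gamma}=P_\Gamma+P_{B\setminus\Gamma}$ I would decompose, exactly as in Lemma~\ref{est for L_N},
\[
x-G_Nx \;=\; (x-P_{B\cup\Gamma}x) + P_{B\setminus\Gamma}x \;=:\; \mathrm{I}+\mathrm{II}.
\]
For term $\mathrm{I}$, the identity $P_B(x-z)=0$ forces $P_{B\cup\Gamma}(x-z)=P_{\Gamma\setminus B}(x-z)$, so
\[
\|\mathrm{I}\| \;=\; \|(I-P_{B\cup\Gamma})(x-z)\| \;\leq\; \|x-z\| + \|P_{\Gamma\setminus B}(x-z)\| \;\leq\; (1+K_N)\|x-z\|,
\]
which is the key saving relative to the $(1+2K_N)\|x-z\|$ bound used in the proof of Lemma~\ref{est for L_N}. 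Term $\mathrm{II}$ is then handled \emph{verbatim} as in that proof: the greedy property of $\Gamma$ gives $|B\setminus\Gamma|\le|\Gamma\setminus B|$ and $a_j^*(P_{B\setminus\Gamma}x)\le a_j^*(P_{\Gamma\setminus B}x)$; disjointness of $\Gamma\setminus B$ from $B$ gives $P_{\Gamma\setminus B}(x)=P_{\Gamma\setminus B}(x-z)$; and $a_j^*(P_{\Gamma\setminus B}(x-z))\le a_j^*(x-z)$. Combining these with hypothesis \eqref{1A} and the implication $i)\Rightarrow ii)$ of Theorem~\ref{Th1} yields
\[
\|\mathrm{II}\| \;\leq\; \sum_{j=1}^{N} a_j^*(x-z)\,\Dt\eta_1(j).
\]
The Abel summation argument of Lemma~\ref{est for K_N}(ii) bounds the right-hand side by $S_N(\eta_1,\eta_2)\,\|x-z\|$ when $\eta_1\in\Wc$, while the quasi-concave trick of Lemma~\ref{est for K_N}(i) (either passing to $\widetilde{\eta_1}\in\Wc$, or estimating the sum via Theorem~\ref{Th2} and the $m(\eta_2')$-norm of $\{a_j^*(x-z)\}$) replaces $S_N$ by $\OT_N(\eta_1,\eta_2)$ when $\eta_1\in\Wqc$.

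Combining the bounds for $\mathrm{I}$ and $\mathrm{II}$ and invoking Lemma~\ref{est for K_N} to absorb $K_N$ into $S_N(\eta_1,\eta_2)$ (resp.\ $\OT_N(\eta_1,\eta_2)$), in case (ii) I obtain
\[
\|x-G_Nx\| \;\leq\; \bigl(1 + K_N + S_N(\eta_1,\eta_2)\bigr)\|x-z\| \;\leq\; \bigl(1 + 2S_N(\eta_1,\eta_2)\bigr)\|x-z\|,
\]
and taking $\inf_{|B|\leq N}\|x-P_Bx\|=\tsigma_N(x)$ gives $\tLN\le 1+2S_N(\eta_1,\eta_2)$; case (i) is identical with $\OT_N$ in place of $S_N$. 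The only real subtlety is the bookkeeping in term $\mathrm{I}$: once one notices that $P_B(x-P_Bx)=0$ kills the extra $K_N$, the remainder of the argument is a direct transcription of Lemmas~\ref{est for K_N} and~\ref{est for L_N}.
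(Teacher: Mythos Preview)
Your proposal is correct and follows essentially the same approach as the paper: both set $z=P_Bx$, decompose $x-G_Nx=\mathrm{I}+\mathrm{II}$ as in Lemma~\ref{est for L_N}, estimate $\mathrm{II}$ verbatim, and for $\mathrm{I}$ exploit $P_B(x-z)=0$ to reduce $(I-P_{B\cup\Gamma})(x-z)$ to $(x-z)-P_{\Gamma\setminus B}(x-z)$, yielding the $(1+K_N)$ bound in place of $(1+2K_N)$. Your write-up is in fact slightly more explicit than the paper's sketch about where the saving of one $K_N$ comes from, but the argument is identical.
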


\begin{proof}[Sketch of a proof]
Repeat the argument from the preceding lemma with $z = P_B(x)$.
The term $II$ is estimated exactly as above, while for the term $I$ we proceed as follows
\Bea \label{Eq-5-6}
I = \|(I-P_{B\cup \Gamma})(x)\| &=& \|x - P_B(x) - P_{\Gamma \setminus B}(x-P_B(x))\| \nonumber \\
&\leq& \|x - P_B(x)\| + \|P_{\Gamma \setminus B}(x-P_B(x))\|  \nonumber \\
&\leq & (1+K_N)\| x - P_B(x)\|\,.
\Eea
Now use \eqref{Eq-5-6} in place of \eqref{Eq-5-4} to obtain
\[
\tLN\leq 1+ 2\, S_N(\eta_1,\eta_2),
\]
or a similar estimate with $\OT_N(\eta_1,\eta_2)$ if we assume $\eta_1\in\Wqc$.
\end{proof}

\subsection{Estimates for $\tLN^*$ and $\LN^*$}
These can now be obtained applying the previous estimates to the system $\{\be^*_n,\be_n\}$, after interchanging the roles of $\eta_1$ and $\eta_2$ (and using the property $\eta_2\in\Wqc$ or $\eta_2\in\Wc$, respectively).

This completes the proof of all the asserted inequalities in Theorem \ref{Th3}, namely  \eqref{Th1a} and \eqref{Th1b}.
The optimality of the inequalities is illustrated with an example in $\S\ref{Example1}$ below.
\ProofEnd

\subsection{First corollaries}

%

\begin{corollary}\label{CorNuew}
If $c_1=\sup_n\|\be_n\|$ and $c_2=\sup_n\|\be^*_n\|_*$, then
\Be\OT_N(D,D^*) \,\leq \,\min\big\{ c_2\,D(N), c_1\,D^*(N)\big\}\,\leq\,c_1c_2\,N.\label{DDsN}\Ee
\end{corollary}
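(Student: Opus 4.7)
The plan is to exploit the quasi-concavity of $D$ and $D^*$ (Lemma \ref{p0}) together with the trivial estimates $D(1)=c_1$ and $D^*(1)=c_2$ to bound each of the two quantities $T_N(D,D^*)$ and $T_N(D^*,D)$ separately, and then take the minimum.

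First I would observe that $D(1)=\sup_n\|\be_n\|=c_1$ and $D^*(1)=\sup_n\|\be^*_n\|_*=c_2$, directly from the definitions in \eqref{defDN}. By Lemma \ref{p0}, $D,D^*\in\Wqc$, so the sequences $j\mapsto D(j)/j$ and $j\mapsto D^*(j)/j$ are non-increasing. Consequently
\[
\frac{D(j)}{j}\;\leq\;\frac{D(1)}{1}\;=\;c_1\mand \frac{D^*(j)}{j}\;\leq\;c_2,\qquad j=1,2,\ldots
\]
Plugging these uniform bounds into the definition of $T_N$, we obtain
\[
T_N(D,D^*)\;=\;\sum_{j=1}^N\frac{D(j)}{j}\,\Delta D^*(j)\;\leq\;c_1\sum_{j=1}^N\Delta D^*(j)\;=\;c_1\,D^*(N),
\]
since the sum telescopes. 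Symmetrically, $T_N(D^*,D)\leq c_2\,D(N)$. Taking the minimum yields the first inequality in \eqref{DDsN}:
\[
\OT_N(D,D^*)\;=\;\min\{T_N(D,D^*),T_N(D^*,D)\}\;\leq\;\min\{c_1 D^*(N),\,c_2 D(N)\}.
\]

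For the second inequality, I would again use the quasi-concavity of $D$ and $D^*$, which gives $D(N)\leq c_1 N$ and $D^*(N)\leq c_2 N$ (by the same monotonicity argument applied to $D(N)/N\leq D(1)/1$). Hence both $c_2 D(N)$ and $c_1 D^*(N)$ are bounded by $c_1 c_2 N$, completing the proof. No real obstacle is expected here: the entire argument is a direct consequence of Lemma \ref{p0} plus the telescoping structure of $T_N$.
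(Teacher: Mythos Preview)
Your proof is correct and follows essentially the same approach as the paper: both use $D(j)/j\leq c_1$ (equivalently $D(j)\leq c_1 j$) to bound $T_N(D,D^*)\leq c_1 D^*(N)$ via telescoping, then swap the roles of $D$ and $D^*$. You simply make explicit the justification $D(1)=c_1$ together with quasi-concavity, which the paper leaves implicit.
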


\begin{proof}
Using $D(j) \leq c_1 j$ (and $D^*\in\SW$), we deduce
\[
\Ts T_N(D,D^*)\leq c_1 \sum_{j=1}^N \Delta D^*(j) = c_1 D^*(N).\]
Changing the roles of $D$  and $D^*$ the result follows easily.
\end{proof}

\BR
Inserting \eqref{DDsN} in Theorem \ref{Th3} one recovers the classical bound $\LN\leq1+3c_1c_2N$; see e.g.  \cite[Theorem 1.8]{BBG2016}.
\ER

The next corollary could be applied quickly in some practical situations.

\begin{corollary} \label{Cor-5.2}
Let $\Been$ be a   complete 
s-biorthogonal system in $\SX$. Then
	
	\sline $i)$ $\;\max\lbrace \LN , \LN ^* \rbrace \lesssim \min\{D(N), D^{*}(N)\}\,.$
	
	\sline $ii)$ If $\;\min\{D(N), D^{*}(N)\} \lesssim K_N\;$, then $\;\LN \approx K_N \approx \min\{D(N), D^{*}(N)\}\,.$
	
	\sline $iii)$ If $\;d(N)\approx 1\;$, then $\;\LN\approx D(N)\, .$
\end{corollary}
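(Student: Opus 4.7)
Part (i) is a direct chaining of Theorem \ref{Th3} and Corollary \ref{CorNuew}: we have $\max\{\LN, \LN^*\} \leq 1 + 3\OT_N(D, D^*) \leq 1 + 3\min\{c_2 D(N), c_1 D^*(N)\}$, and the additive ``$1$'' can be absorbed since seminormalization forces $\min\{D(N), D^*(N)\} \gtrsim 1$.

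For part (ii), the chain $K_N \leq \OT_N(D,D^*) \leq \min\{c_2 D, c_1 D^*\}$ (from Theorem \ref{Th3} and Corollary \ref{CorNuew}), combined with the hypothesis $\min\{D, D^*\} \lesssim K_N$, immediately yields $K_N \approx \OT_N \approx \min\{D, D^*\}$; together with part (i), this gives $\LN \lesssim \min\{D, D^*\} \approx K_N$. For the converse $K_N \lesssim \LN$, the plan is to establish the general inequality $K_N \leq 1 + \LN$ by perturbation. Given $|A| \leq N$ and $y \in \SX$ of unit norm, I would extend $A$ to $\tilde A$ of size $N$ with $\tilde A \setminus A \subset \SN \setminus \supp y$ (possible because $\SX$ is infinite-dimensional), and consider $x := \bone_{\tilde A} + \e y$ with $\e$ chosen of order $1/\sup_n|\be^*_n(y)|$ (finite by seminormalization of $\cB^*$). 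With this choice, the coefficients of $x$ on $\tilde A$ exceed those outside, so $\tilde A$ is a greedy set of order $N$ for $x$; since $\be^*_n(y)=0$ on $\tilde A\setminus A$, one finds $G_N x = \bone_{\tilde A} + \e P_A y$, whence $\|x - G_N x\| = \e\|y - P_A y\|$ and $\sigma_N(x) \leq \|x - \bone_{\tilde A}\| = \e$. Feeding this into \eqref{LN} yields $\|P_A y\| \leq 1+\LN$, and since $\LN \geq 1$ in general, we conclude $K_N \leq 2\LN$.

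Part (iii) follows from the classical inequality $D(N) \leq \LN\, d(N)$, proved by a swap argument: for disjoint $A, B \subset \SN$ of size $N$ and signs $\bfe, \bfe' \in \cY$, the element $x := \bone_{\bfe A} + \bone_{\bfe' B}$ has $A$ as a greedy set of order $N$, so $G_N x = \bone_{\bfe A}$ and $\sigma_N(x) \leq \min\{\|\bone_{\bfe A}\|, \|\bone_{\bfe' B}\|\}$ (since both $\bone_{\bfe A}$ and $\bone_{\bfe' B}$ are valid $N$-term competitors). The Lebesgue inequality then gives $\|\bone_{\bfe' B}\| \leq \LN\|\bone_{\bfe A}\|$, and taking appropriate suprema and infima over disjoint pairs---possible by the infinite-dimensionality of $\SX$---produces $D(N) \leq \LN\, d(N)$. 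Combined with $d(N) \approx 1$ and $\LN \lesssim D(N)$ from part (i), we conclude $\LN \approx D(N)$.

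The main technical point lies in the perturbation of part (ii): the delicate step is to ensure $\tilde A$ is genuinely a greedy set of exact order $N$, which is why the extension $\tilde A \setminus A$ is placed outside $\supp y$ (so that those coefficients equal exactly $1$) and $\e$ is tuned so that the coefficients on $A$ stay above, and those off $\tilde A$ stay strictly below, the midpoint $1/2$.
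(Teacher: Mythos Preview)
Your overall structure matches the paper's: part (i) is identical (Theorem~\ref{Th3} chained with Corollary~\ref{CorNuew}), while for (ii) and (iii) the paper simply cites the lower bounds $K_N\lesssim\LN$ and $\mu_N\lesssim\LN$ from \cite[Prop.~3.3]{GHO2013} and \cite[Prop.~1.1]{BBG2016}, whereas you supply self-contained arguments. These are essentially correct in spirit, with two caveats. In (ii), the extension $\tilde A\setminus A\subset\SN\setminus\supp y$ may be impossible, since $\supp y$ can be cofinite; the fix is routine --- restrict to finitely supported $y$ (dense by completeness of $\cB$) and use that $P_A$ is a bounded finite-rank operator to pass to general $y$.

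The more substantive issue is in (iii): your swap inequality $\|\bone_{\bfe'B}\|\leq\LN\|\bone_{\bfe A}\|$ holds only for \emph{disjoint} $A,B$, and infinite-dimensionality alone does not guarantee that $D(N)$ and $d(N)$ can be simultaneously approximated by disjoint pairs --- the near-minimizers for $d(N)$ might all intersect a fixed near-maximizer $B$ for $D(N)$, and there is no general mechanism to move them off $B$ without losing control of the norm. This is precisely the subtlety handled in \cite[Prop.~1.1]{BBG2016}, which is why the paper cites it rather than reproving it here. If you want a self-contained argument, one route is to perturb the coefficients (as you did in (ii)) so that a prescribed set is forced to be the greedy set regardless of overlap; but this requires more care than your sketch indicates.
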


\begin{proof}
	$i)$ follows from Theorem \ref{Th3} and the previous corollary.
	
	$ii)$ follows from (i) and the known lower bound $\LN\gtrsim K_N$; see e.g. \cite[Proposition 3.3]{GHO2013}.
	
	$iii)$ Finally, if $d(N) \approx 1$, then the superdemocracy parameter in \eqref{sdemo} takes the form
	$\mu_N=\sup_{n\leq N}D(n)/d(n)\approx D(N)$. So the result follows from (i)  and the known lower bound $\LN\gtrsim \mu_N$; see \cite[Proposition 1.1]{BBG2016}.
\end{proof}

\medskip

\section{Estimates for $D^*(N)$}\label{superdemocacydual}

In practice, Theorem \ref{Th3} needs good bounds of the upper democracy sequences
 $D(N)$ and $D^*(N)$, associated with $\cB$ and $\mathcal B^*$.
Sometimes the dual norm $\|\cdot\|_*$ is not explicit, or is hard to compute.
In this section we give bounds for $D^{*}(N)$ which only involve parameters of $\mathcal B$, namely the \emph{lower superdemocracy constants}, $d(N)$ or $\ld(N)$, defined in \eqref{dN},
and the  \emph{quasi-greedy constants} 
$$   
g_N = \sup_{n\leq N}\|G_n\|,\,\quad
g^c_N = \sup_{n\leq N} \|I-G_n\|,\mand \hg_N=\sup_{{0\leq k\leq n\leq N}}\big\|G_n-G_k\|.
$$  
Note that, by the triangle inequality, $\hg_N \leq 2 \min \{ g_n , g^c_N \}$.

\begin{proposition} \label{newlema}
	Let $\Been$ be a  complete
s-biorthogonal system in $\SX$. Then
	\Be \frac{N}{\ld(N)}\, \leq\, D^{*}(N) \,\leq \, \sum_{j=1}^N \frac{\hg_j}{d(j)}\,.\label{Dgd}\Ee
\end{proposition}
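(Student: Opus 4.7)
The lower bound $N/\underline d(N)\le D^*(N)$ is immediate from Lemma~\ref{p0bis}, which already gives $N\le D^*(N)\,\underline d(N)$, so all the work lies in the upper bound. For that, I would fix arbitrary $A\subset\SN$ with $|A|=N$ and $\bfe\in\cY$ and use the duality formula
$\|\bone^*_{\bfe A}\|_*=\sup_{\|x\|=1}|\bone^*_{\bfe A}(x)|$. For each such $x$, a trivial reordering yields
$$
|\bone^*_{\bfe A}(x)|\;\le\;\sum_{j\in A}|\be_j^*(x)|\;\le\;\sum_{k=1}^N a_k^*(x),
$$
where $a_k^*(x)$ denotes the $k$-th largest value of $\{|\be_n^*(x)|\}_n$; the last inequality uses that the $k$-th largest value of a subset of size $N$ never exceeds the $k$-th largest value of the whole sequence.

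The heart of the argument is the pointwise claim
\Be\label{key-claim}
a_k^*(x)\,d(k)\;\le\;\hg_k\,\|x\|,\qquad k=1,2,\ldots
\Ee
which, once established, immediately yields $\sum_{k=1}^N a_k^*(x)\le \|x\|\sum_{k=1}^N \hg_k/d(k)$ and hence the desired estimate for $D^*(N)$ after taking suprema. To prove \eqref{key-claim}, I would fix a greedy bijection $\pi$ of $x$ with $\e_j=\sign(\be^*_{\pi(j)}(x))$, set $B_k=\{\pi(1),\dots,\pi(k)\}$, and use the lower bound $d(k)\le\|\bone_{\bfe B_k}\|$ to reduce \eqref{key-claim} to showing $\|a_k^*(x)\bone_{\bfe B_k}\|\le \hg_k\|x\|$.

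For this, introduce the scalars $c_j:=a_k^*(x)/a_j^*(x)\in[0,1]$, which are non-decreasing in $j$ with $c_k=1$. Since $G_j(x)-G_{j-1}(x)=a_j^*(x)\,\e_j\,\be_{\pi(j)}$, one has
$$
a_k^*(x)\bone_{\bfe B_k}\;=\;\sum_{j=1}^{k}c_j\,\bigl(G_j(x)-G_{j-1}(x)\bigr),
$$
and applying the Abel summation formula \eqref{Abel2} (with $G_0=0$) rearranges this into
$$
a_k^*(x)\bone_{\bfe B_k}\;=\;c_1\,G_k(x)\;+\;\sum_{j=1}^{k-1}(c_{j+1}-c_j)\,\bigl(G_k(x)-G_j(x)\bigr).
$$
All coefficients are non-negative and sum exactly to $c_k=1$, and each norm $\|G_k(x)-G_j(x)\|$ (including $\|G_k(x)\|=\|G_k(x)-G_0(x)\|$) is bounded by $\hg_k\|x\|$ by the very definition of $\hg_k$. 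The triangle inequality then delivers $\|a_k^*(x)\bone_{\bfe B_k}\|\le\hg_k\|x\|$, which is \eqref{key-claim}.

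The main obstacle is precisely locating this convex-combination identity. Writing $a_k^*(x)\bone_{\bfe B_k}$ as the plain telescoping sum $\sum_{j\le k}c_j(G_j-G_{j-1})$ and estimating each increment separately produces a useless accumulation over $j$; the right move is the Abel reorganization that replaces the $k$ short increments by the single constant $\hg_k$ controlling all the long differences $G_k-G_j$ simultaneously. Once this cancellation is spotted, summing \eqref{key-claim} in $k$ closes the proof.
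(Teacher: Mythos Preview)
Your proof is correct and follows essentially the same approach as the paper: both reduce the upper bound to the pointwise inequality $a_k^*(x)\,\|\bone_{\bfe B_k}\|\le \hg_k\|x\|$ for greedy sets $B_k$, then use $d(k)\le\|\bone_{\bfe B_k}\|$ and sum. The only difference is that the paper quotes this inequality as Lemma~\ref{L-7-1} (from \cite{DKKT,BBG2016}) without proof, whereas you supply a self-contained Abel-summation argument for it---which is in fact the standard proof of that lemma.
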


The proof is a slight generalization of known arguments from  \cite[Proposition 4.4]{DKKT} and \cite[Theorem 5]{Wo3} (see also \cite[Theorem 4]{Bed2008}).
We first recall another result from \cite{DKKT} (with the notation given in \cite[Lemma 2.3]{BBG2016}).

\begin{lemma} \label{L-7-1}
	Let $\{\be_n,\be_n^*\}_{n=1}^\infty$ be a  complete
		s-biorthogonal system in $\SX$. If $x\in \SX$, $ \Lambda \in \mathcal G(x,N)$ and $\e_n =\sign\,(\be_n^*(x))$, then
	$$
	\min_{n\in \Lambda} |\be_n^*(x)|\, \|\bone_{\bfe \Lambda}\| \leq \hg_N\, \|x\|\,.
	$$
\end{lemma}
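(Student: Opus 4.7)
The plan is to express the vector $t\,\bone_{\bfe\Lambda}$ (where $t:=\min_{n\in\Lambda}|\be_n^*(x)|$) as a difference $G_n y-G_k y$ of two greedy operators of orders $0\le k\le n\le N$ applied to a companion element $y\in\SX$ tied to $x$. Once this is achieved with $\|y\|\le\|x\|$, the claim follows immediately from the operator-norm bound $\|G_n-G_k\|\le\hg_N$ built into the definition.

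For the construction of $y$, I would take the ``level-$t$ truncation''
\[
 y\;:=\;x-G_Nx+t\,\bone_{\bfe\Lambda},\qquad\mbox{where}\quad G_Nx=P_\Lambda x=\sum_{n\in\Lambda}\be_n^*(x)\be_n.
\]
A routine coordinate check gives $\be_n^*(y)=t\e_n$ for $n\in\Lambda$ (since $\be_n^*(x)=|\be_n^*(x)|\e_n$ for these indices) and $\be_n^*(y)=\be_n^*(x)$, of modulus at most $t$, for $n\notin\Lambda$. Hence $\Lambda$ is a valid greedy set of order $N$ for $y$, so the corresponding greedy operator satisfies $G_Ny=t\,\bone_{\bfe\Lambda}$, while $G_0y=0$ trivially. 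This produces the identity $t\,\bone_{\bfe\Lambda}=G_Ny-G_0y$, and the definition of $\hg_N$ yields
\[
 t\,\|\bone_{\bfe\Lambda}\|\;=\;\|G_Ny-G_0y\|\;\le\;\hg_N\,\|y\|.
\]

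The crucial and hardest step is to establish $\|y\|\le\|x\|$; the naive triangle bound $\|y\|\le\|x-G_Nx\|+t\|\bone_{\bfe\Lambda}\|$ is circular, as it reintroduces the quantity we wish to estimate. I expect the argument to proceed via a perturbation device in the spirit of \cite{DKKT,Wo3,Bed2008}. One considers the family $u_\mu:=x+\mu\,\bone_{\bfe\Lambda}$, $\mu>0$, for which $\Lambda$ is the \emph{unique} greedy set of order $N$ and $G_Nu_\mu=G_Nx+\mu\,\bone_{\bfe\Lambda}$; combining the bound $\|G_Nu_\mu-G_0u_\mu\|\le\hg_N\|u_\mu\|$ with a reverse triangle inequality should allow one to isolate $t\,\|\bone_{\bfe\Lambda}\|$ on the left after optimizing over $\mu$. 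Equivalently, one can Abel-decompose $G_Nx=\sum_{k<N}(t_k-t_{k+1})\bone_{\bfe\Lambda_k}+t\,\bone_{\bfe\Lambda}$ (with $\Lambda_k$ the first $k$ greedy indices of $x$ and $t_k=|\be_{n_k}^*(x)|$) and handle the partial sums $\bone_{\bfe\Lambda_k}$ by the same device. The delicate point is closing the estimate with a single factor of $\hg_N$: applying a separate bound on the truncation operator $T_t$ would only yield $\hg_N^2$, so the argument must exploit directly the cancellation coming from the definition of $\hg_N$ as a difference norm.
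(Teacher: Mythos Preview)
Your approach has a genuine gap: the inequality $\|y\|\le\|x\|$ for the level-$t$ truncation $y=x-G_Nx+t\,\bone_{\bfe\Lambda}$ is \emph{false} for general s-biorthogonal systems. For a concrete counterexample, equip $\SR^2$ with the norm $\|(a,b)\|=|b|+|a-2b|$ and take the canonical coordinate system (which is seminormalized: $\|\be_1\|=1$, $\|\be_2\|=3$). For $x=(2,1)$ one has $\Lambda=\{1,2\}$, $t=1$, $y=(1,1)$, and $\|x\|=1<2=\|y\|$. Boundedness of this truncation operator is known only under extra hypotheses such as quasi-greedyness, and even then not with constant $1$. Your perturbation idea with $u_\mu=x+\mu\,\bone_{\bfe\Lambda}$ does not rescue the argument either: combining $\|G_Nu_\mu\|\le\hg_N\|u_\mu\|$ with the reverse triangle inequality yields $(1-\hg_N)\,\mu\,\|\bone_{\bfe\Lambda}\|\le\|G_Nx\|+\hg_N\|x\|$, which is vacuous once $\hg_N\ge1$.

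The paper does not spell out a proof here, only citing \cite{DKKT} and \cite[Lemma~2.3]{BBG2016}; the intended argument works directly on $x$, with no auxiliary $y$. Order the greedy coefficients as $a_j=|\be^*_{\pi(j)}(x)|$ with $a_1\ge\ldots\ge a_N=t>0$, put $b_j=t/a_j\in(0,1]$ (nondecreasing, $b_N=1$), and set $T_k:=G_Nx-G_kx$ for $0\le k\le N$, so that $a_j\e_{\pi(j)}\be_{\pi(j)}=T_{j-1}-T_j$. Abel's formula \eqref{Abel2} gives
\[
 t\,\bone_{\bfe\Lambda}=\sum_{j=1}^N b_j\,(T_{j-1}-T_j)=b_1T_0+\sum_{j=1}^{N-1}(b_{j+1}-b_j)\,T_j,
\]
a \emph{convex} combination of $T_0,\ldots,T_{N-1}$ (nonnegative weights summing to $b_N=1$). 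Hence
\[
t\,\|\bone_{\bfe\Lambda}\|\;\le\;\max_{0\le k<N}\|G_Nx-G_kx\|\;\le\;\hg_N\,\|x\|.
\]
Your Abel-decomposition remark was close in spirit, but the key point is to expand in the ``tail'' differences $T_k=G_Nx-G_kx$ rather than in the vectors $\bone_{\bfe\Lambda_k}$: the former are controlled directly by $\hg_N\|x\|$, while the latter are exactly the quantities you are trying to bound.
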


\Proofof{Proposition \ref{newlema}}
	The left hand side of \eqref{Dgd} was shown in Lemma \ref{p0bis}.
	For the right inequality, we pick $|A|=N$ and $\bfe\in\cY$, and we shall estimate $\|\bone_{\bfe A}^*\|_* = \sup_{\|x\|=1} |\bone_{\bfe A}^*(x)|$. Take $x\in \SX$ with $\|x\|=1$, and let $\pi$ be a greedy ordering of $x$. Then
	\Beas
	    |\bone^*_{\bfe A}(x)| & = & \Big|\sum_{n\in A} \e_n \be_n^*(x)\Big| \leq  \sum_{n\in A} |\be_n^*(x)| \\
	   &\leq& \sum_{j=1}^N |\be_{\pi(j)}^*(x)| =  \sum_{j=1}^N |\be_{\pi(j)}^*(x)| \, \frac{\|\bone_{\bdt \Lambda_j}\|}{\|\bone_{\bdt \Lambda_j}\|}\,,
	\Eeas
	where $\Lambda_j \in \mathcal G(x, j)$ is a greedy set for $x$ of size $j$ and $\bdt = \{\sign\,(\be_n^*(x))\}$. By Lemma \ref{L-7-1} and $\|x\|=1$,
	$$
	|\bone^*_{\bfe A}(x)| \leq \, \sum_{j=1}^N  \, \frac{\hg_j}{\|\bone_{\bdt \Lambda_j}\|} \leq \, \sum_{j=1}^N \, \frac{\hg_j}{d(j)}\,.
	$$
Taking the sup over all $\|x\|=1$, $|A|=N$ and $\bfe\in\cY$ gives the desired result.
\ProofEnd

As special cases we obtain the following.


\begin{corollary} \label{Prop-7-2}
	Let $\Been$ be a  complete
		s-biorthogonal system in $\SX$.
	
\sline (i) If $\mathcal B$ is quasi-greedy then
${\ld}'(N)\, \leq\, D^{*}(N) \,\lesssim\, \widetilde{(d')}(N)\,.$
If additionally $d\in\Wqc$, then  $D^*(N)\lesssim\, d^\prime (N) \ln (N+1)$, and if $I_d<1$ then $\;D^*(N) \approx d^\prime(N) $.

\sline (ii)  If $\mathcal B$ is superdemocratic  then
$\;d'(N)\, \leq\, D^{*}(N) \,\lesssim\, (\sum_{j=1}^N\frac{g_j}{j})\, d^\prime(N)$. If additionally  $i_g>0$, then
$\;d'(N)\,\lesssim\,D^*(N) \lesssim \,g_N\,d^\prime(N) $.



\end{corollary}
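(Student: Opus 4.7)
The plan is to derive both parts of the corollary from the two-sided bound
\[
\tfrac{N}{\ld(N)}\;=\;\ld'(N)\;\leq\;D^{*}(N)\;\leq\;\sum_{j=1}^{N}\tfrac{\hg_{j}}{d(j)}
\]
given by Proposition \ref{newlema}, combined with the elementary identity
\(\tfrac{1}{d(j)}=\tfrac{d'(j)}{j}\), which rewrites the right-hand side as a weighted summing weight of \(d'\). The lower bound \(d'(N)\le \ld'(N)\) follows from \(\ld(N)\le d(N)\) and will be used in both parts.

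For (i), quasi-greedyness gives \(g_{N}=O(1)\) and hence \(\hg_{N}\le 2g_{N}=O(1)\). Plugging this into Proposition \ref{newlema} yields
\[
D^{*}(N)\;\lesssim\;\sum_{j=1}^{N}\tfrac{1}{d(j)}\;=\;\sum_{j=1}^{N}\tfrac{d'(j)}{j}\;=\;\widetilde{(d')}(N),
\]
which, together with \(\ld'(N)\le D^{*}(N)\), gives the first pair of inequalities. If additionally \(d\in\Wqc\), then \(d'\in\Wqc\subset\SW\) by \eqref{e2}, and \eqref{tetalog} applied to \(d'\) yields \(\widetilde{(d')}(N)\le d'(N)(1+\ln N)\), which is the asserted logarithmic bound. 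If moreover \(I_{d}<1\), then by \eqref{iI1} we have \(i_{d'}=1-I_{d}>0\), so Proposition \ref{p2} gives \(\widetilde{(d')}(N)\approx d'(N)\); together with the (a fortiori) lower bound \(d'(N)\le D^{*}(N)\) this forces \(D^{*}(N)\approx d'(N)\).

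For (ii), superdemocracy gives \(d(N)\approx D(N)\), and since \(D\) is quasi-concave by Lemma \ref{p0} we deduce \(d\in\Wqc\); hence by \eqref{e2}, \(d'\) is non-decreasing on \(\SN\), so \(d'(j)\le d'(N)\) for all \(j\le N\). Using again \(\hg_{j}\le 2g_{j}\) in Proposition \ref{newlema},
\[
D^{*}(N)\;\le\;\sum_{j=1}^{N}\tfrac{\hg_{j}}{d(j)}\;\leq\;2\sum_{j=1}^{N} g_{j}\,\tfrac{d'(j)}{j}\;\leq\;2\,d'(N)\sum_{j=1}^{N}\tfrac{g_{j}}{j},
\]
which is the claimed upper bound; the lower bound \(d'(N)\le D^{*}(N)\) is as before. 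Finally, if \(i_{g}>0\), then since \(g=\{g_{N}\}\) is non-decreasing by definition we have \(g\in\SW\), and Proposition \ref{p2} yields \(\sum_{j=1}^{N}g_{j}/j=\widetilde{g}(N)\lesssim g_{N}\); substituting gives \(D^{*}(N)\lesssim g_{N}\,d'(N)\).

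There is no serious obstacle; the main point is bookkeeping of the monotonicity hypotheses needed to apply \eqref{tetalog} and Proposition \ref{p2}, and spotting that the identity \(1/d(j)=d'(j)/j\) turns the Proposition \ref{newlema} bound into a form where \(d'(N)\) can be factored out thanks to monotonicity of \(d'\).
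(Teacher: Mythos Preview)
Your argument is correct and follows essentially the same route as the paper: Proposition~\ref{newlema} for the two-sided bound, then \eqref{tetalog} and the identity $i_{d'}=1-I_d$ (which is precisely what underlies Corollary~\ref{cor2}, invoked in the paper) for the refinements in (i), and quasi-concavity of $D$ together with Proposition~\ref{p2} for (ii).

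One small technical point in part (ii): from $d\approx D$ and $D\in\Wqc$ you cannot literally conclude $d\in\Wqc$; the sequence $d$ need not itself be monotone nor have $d(N)/N$ non-increasing. What you do get is that $d'(j)\approx D'(j)$ with $D'$ non-decreasing, hence $d'(j)\lesssim d'(N)$ for $j\le N$, which is all you need to pull $d'(N)$ out of the sum. The paper phrases this as ``$d(N)\approx D(N)\in\Wqc$'' rather than asserting $d\in\Wqc$. With this adjustment your chain of inequalities in (ii) becomes $\lesssim$ rather than $\le$ at the last step, matching the statement.
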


\begin{proof} (i) is direct from \eqref{Dgd} and $\sup_N \hg_N<\infty$, and for the second part, from \eqref{tetalog} and Corollary \ref{cor2}. In (ii) one uses $d(N)\approx D(N)\in \Wqc$ in \eqref{Dgd}, together with Proposition \ref{p2}. 
\end{proof}

 We conclude with a new definition, which we find appropriate in this context.

 \begin{definition} We say that $\Been$ has the property $\bDs$ if $D^*(N)\approx d'(N)$.
 \end{definition}

We list various examples where this property holds (or fails).
\Benu
\item All bidemocratic bases (as in \eqref{bidem}) have the property $\bDs$.

\item All quasi-greedy bases with $d\in\Wqc$ and $I_d<1$ have the property $\bDs$, by Corollary \ref{Prop-7-2}.i.

\item Property $\bDs$ may fail when $I_d=1$, even for greedy bases.
 Indeed, the canonical system in the discrete Triebel-Lizorkin space $\mathfrak{f}_1^q$, $1\leq q\leq\infty$, is a greedy basis with $d(N)\approx D(N)\approx N$. However, using the duality between $\mathfrak{f}_1^q$ and  $bmo_{q'}$, one can show that $D^*(N)\approx [\ln (N+1)]^{1/q'}$.
\item The canonical basis in $\ell^p\oplus\ell^q$ has the property $\bDs$, for all $1\leq p,q\leq \infty$.
In fact, $d(N)\approx N^{\frac1p\wedge \frac1q}$, so $d'(N)\approx N^{\frac1{p'}\vee \frac1{q'}}\approx D^*(N)$.
\item The trigonometric system in $L^p(\ST)$ has the property $\bDs$ when $1<p\leq\infty$; see $\S\ref{Example3}$ below. However, this property fails for $p=1$, since $d'(N)\approx N/\ln (N+1)$, but $D^*(N)\approx N$.
\Eenu

\section{Corollaries in special cases}

 In this section we investigate the growth of $\OT_N (D,D^*)$ when $\cB$ is quasi-greedy, superdemocratic, or has property $\bDs$.
In all these cases we show that $\LN\lesssim \OT_N (D,D^*) \lesssim \LN\ln (N+1) $, so the loss in Theorem \ref{Th3} is at most logarithmic. 

\begin{lemma}\label{L_TDdg}
Let $\Been$ be a  complete
	s-biorthogonal system in $\SX$. Then
\Be \label{TNgmu}
  T_N  (D, D^*) \leq  \sum_{j=1}^N \frac{\hg_j\,\mu_j}{j}\,.
    \Ee
\end{lemma}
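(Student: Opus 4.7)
The plan is to combine two ingredients we already have at our disposal: the Abel summation formula \eqref{Abel2}, and the bound $D^*(n) \leq \sum_{k=1}^n \hg_k/d(k)$ from Proposition \ref{newlema}. The idea is to first rewrite $T_N(D,D^*)$ via summation by parts so that $D^*(n)$ appears without the difference $\Delta D^*$, then substitute the upper bound from Proposition \ref{newlema}, and finally swap the order of summation to recover $D(k)/k$ as the resulting coefficient, which produces the factor $\mu_k$.

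Concretely, I would apply \eqref{Abel2} with $d_n = D(n)/n$ and $x_n = D^*(n)$ (setting $D^*(0)=0$), giving
\[
T_N(D,D^*) \;=\; \sum_{n=1}^{N-1}\Bigl[\tfrac{D(n)}{n}-\tfrac{D(n+1)}{n+1}\Bigr]D^*(n) \;+\; \tfrac{D(N)}{N}\,D^*(N).
\]
By Lemma \ref{p0}, $D\in\Wqc$, so the bracketed differences are nonnegative, which means I can safely replace each $D^*(n)$ by the upper bound $\sum_{k=1}^n \hg_k/d(k)$ from Proposition \ref{newlema}.

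Swapping the order of summation, the coefficient of $\hg_k/d(k)$ becomes, for $1\le k\le N-1$,
\[
\sum_{n=k}^{N-1}\Bigl[\tfrac{D(n)}{n}-\tfrac{D(n+1)}{n+1}\Bigr] + \tfrac{D(N)}{N} \;=\; \tfrac{D(k)}{k},
\]
by telescoping, and the same value $D(N)/N$ appears for $k=N$. Hence
\[
T_N(D,D^*) \;\leq\; \sum_{k=1}^N \frac{\hg_k}{d(k)}\cdot\frac{D(k)}{k}.
\]
Finally, since $D(k)/d(k) \leq \sup_{n\leq k} D(n)/d(n) = \mu_k$ by the definition \eqref{sdemo}, the right-hand side is bounded by $\sum_{k=1}^N \hg_k\mu_k/k$, which is the desired inequality.

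No real obstacle is expected here: both key inputs (Abel summation and the estimate for $D^*$) are already in place, and the only verification needed is the nonnegativity of the difference sequence of $D(n)/n$, which is exactly the quasi-concavity asserted in Lemma \ref{p0}. If anything, the only point that deserves care is the bookkeeping in the telescoping step, to make sure the boundary term $D(N)/N \cdot D^*(N)$ is correctly absorbed into the final coefficient.
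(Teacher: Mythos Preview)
Your argument is correct and follows essentially the same route as the paper: both proofs bound $D^*(n)$ by $\sum_{k=1}^n \hg_k/d(k)$ via Proposition~\ref{newlema} and then use the quasi-concavity of $D$ to pass from $T_N(D,D^*)$ to $T_N(D,\eta)$, where $\eta(n)=\sum_{k=1}^n \hg_k/d(k)$. The only cosmetic difference is that the paper packages the replacement step as a direct application of Lemma~\ref{etaxi}, whereas you unfold the Abel summation and telescoping explicitly---which is precisely how Lemma~\ref{etaxi} is proved.
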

\begin{proof} By Proposition \ref{newlema}, $ D^*(N)\leq  \sum_{j=1}^N \frac{\hg_j}{d(j)}=:\eta(N)$. Using $D\in\Wqc$ and Lemma \ref{etaxi} it follows that
\[
T_N(D,D^*)\leq T_N(D,\eta)=   \sum_{j=1}^N \frac{D(j)}{j}\frac{\hg_j}{d(j)}\,
\leq  \sum_{j=1}^N \frac{\hg_j\,\mu_j}{j}\,.
\]
\end{proof}

\begin{corollary} \label{pro-7-4-new}
 Let $\Been$ be a  complete
	s-biorthogonal system in $\SX$. If $\mathcal B=\{\be_n\}_{n=1}^\infty$ is superdemocratic, then
\Be
\max\big\{K_N,\tLN,\LN,g^*_N,\mu^*_N,\tLN^*,\LN^*\big\} \,\lesssim T_N(D,D^*)\lesssim \,g_N\,\ln(N+1).\label{maxKLg}
\Ee
In particular,
\Be
 \LN  \lesssim \OT_N  (D,D^*) \lesssim \tLN  \ln(N+1)\,, \quad N=1,2,\dots
\label{LNlog}\Ee
Finally, if $i_g>0$ then, $\OT_N\approx\LN\approx \tLN\approx K_N\approx g_N$.
\end{corollary}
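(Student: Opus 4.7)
The plan is to chain together Theorem \ref{Th3}, Lemma \ref{L_TDdg}, and a few elementary comparisons between the constants involved.

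For the first inequality $\max\{K_N,\tLN,\LN,g_N^*,\mu_N^*,\tLN^*,\LN^*\} \lesssim T_N(D,D^*)$, I would use Theorem \ref{Th3}, which yields at once $K_N \leq \OT_N(D,D^*) \leq T_N(D,D^*)$ and $\max\{\LN,\tLN,\LN^*,\tLN^*\} \leq 1 + 3\OT_N(D,D^*)$; the additive $1$ is absorbed into the multiplicative constant via the seminormalization bound $T_N(D,D^*) \geq D(1)\,\Delta D^*(1) \gtrsim 1$. For $g_N^*$, the trivial $n$-term approximation $c_n \equiv 0$ in the definition of $\LN^*$ yields $\|G_n^*(x^*)\|_* \leq (1+\LN^*)\|x^*\|_*$ for every $n \leq N$, so $g_N^* \leq 1 + \LN^* \lesssim T_N(D,D^*)$. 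For $\mu_N^*$, I would invoke the general lower bound $\mu_N \lesssim \LN$ from \cite[Proposition 1.1]{BBG2016}, applied to the biorthogonal system $\{\be_n^*,\be_n\}$ in $\widehat\SX$.

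For the second inequality $T_N(D,D^*) \lesssim g_N\ln(N+1)$, I would apply Lemma \ref{L_TDdg} together with superdemocracy of $\cB$ (which gives $\mu_j \leq C$ uniformly) and monotonicity of $g_n$ (which gives $\hg_j \leq 2g_j \leq 2g_N$). The sum collapses to
\[
T_N(D,D^*) \;\leq\; \sum_{j=1}^N \frac{\hg_j\,\mu_j}{j} \;\lesssim\; g_N\sum_{j=1}^N \frac{1}{j} \;\lesssim\; g_N\ln(N+1).
\]
The ``in particular'' statement \eqref{LNlog} follows immediately from $\LN \lesssim \OT_N(D,D^*) \leq T_N(D,D^*)$ combined with the elementary bound $g_N \leq 1 + \tLN$, obtained by taking $A = \emptyset$ in the definition of $\tLN$.

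For the final equivalence under $i_g > 0$, I would apply Proposition \ref{p2} to get $\widetilde g(N) \lesssim g_N$, sharpening the previous estimate to $T_N(D,D^*) \lesssim \sum_{j=1}^N g_j/j \lesssim g_N$. Combined with the trivial $g_N \leq K_N$ (since any greedy operator is a coordinate projection $P_\Lambda$) and $K_N \leq \OT_N(D,D^*)$ from Theorem \ref{Th3}, this gives $g_N \approx K_N \approx \OT_N(D,D^*)$. The Lebesgue constants slot into this chain via the already-established $\LN,\tLN \lesssim \OT_N$ and the reverse bounds $g_N \leq 1+\tLN \leq 1+\LN$. The main obstacle is not conceptual but bookkeeping: one must absorb additive $1$'s (for instance in $\LN \leq 1+3\OT_N$ or $g_N \leq 1+\tLN$) into multiplicative constants, which is handled uniformly by the trivial lower bounds $T_N(D,D^*) \gtrsim 1$ and $g_N \geq \|G_1\| \geq 1$ coming from seminormalization; beyond this, the argument is a direct assembly of earlier results.
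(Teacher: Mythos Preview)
Your argument is correct and tracks the paper's proof closely: superdemocracy plugged into Lemma~\ref{L_TDdg} for the upper bound $T_N\lesssim g_N\ln(N+1)$, Theorem~\ref{Th3} together with the lower bounds from \cite[Proposition~1.1]{BBG2016} for \eqref{maxKLg}, and Proposition~\ref{p2} for the case $i_g>0$. The only slip is your justification of ``$g_N\le 1+\tLN$'' (and likewise ``$g_N^*\le 1+\LN^*$''): taking $A=\emptyset$ in the definition of $\tLN$ yields only $\|G_N\|\le 1+\tLN$ for greedy operators of order exactly $N$, not the supremum $g_N=\sup_{n\le N}\|G_n\|$; the paper instead cites \cite[Proposition~1.1]{BBG2016} for $\tLN\ge g^c_N$ and $\tLN^*\gtrsim g_N^*$, after which $g_N\le\hg_N\le 2g^c_N$ closes the gap.
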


\begin{proof}
Call $C_s:=\sup_N\mu_N<\infty$. Then \eqref{TNgmu} gives
    \Be
  T_N  (D,D^*) \leq C_s\, \sum_{j=1}^N \frac{\hg_j}{j} \leq\,C_s\, \hg_N (1+\ln N).
    \label{TN_Cs}\Ee The results now follow easily from Theorem \ref{Th3}
and the known lower bounds $\LN\geq\tLN\geq g^c_N$ and $\tLN^*\gtrsim\max\{g^*_N,\mu^*_N\}$; see e.g. \cite[Prop 1.1]{BBG2016}.
Apply Proposition \ref{p2} to \eqref{TN_Cs} in order to handle the case of $i_g > 0$.
\end{proof}

\BR\label{Rsdem}
From \eqref{maxKLg} we see that, for superdemocratic bases,
\Be
K_N\,\lesssim\, g_N\,\ln(N+1),
\label{kgN}\Ee
that is, $K_N/g_N$ cannot grow arbitrarily. This was known for quasi-greedy bases
\cite[Lemma 8.2]{DKK}, but seems to be unnoticed for general superdemocratic bases.
\ER

\BR Remark 4.6 in \cite{DKKT} provides an example of a superdemocratic basis with $i_g>0$,
which is neither quasi-greedy nor bidemocratic.
Our result implies the asymptotically optimal bound $\LN \approx \tLN\approx K_N\approx g_N$.
\ER

 \begin{corollary}\label{pro-new1}
  Let $\Been$ be a  complete
 	s-biorthogonal system in $\SX$.	
 Assume that either $\cB=\{\be_n\}_{n=1}^\infty $ is quasi-greedy, or $\Been$ has the property $\bDs$. Then 
\Be
\max\big\{K_N,\tLN,\LN,g^*_N,\mu^*_N,\tLN^*,\LN^*\big\} \,\lesssim T_N(D,D^*)\lesssim\,\mu_N\,\ln(N+1).\label{maxKLmu} \Ee
In particular, \eqref{LNlog} holds, and moreover,
 \Be
\mu_N\,\lesssim\,\tLN\,\leq	\,\LN\,  \lesssim \mu_N\ln(N+1)\,, \quad N=1,2,\dots
\label{LLmuN}\Ee
Finally,  if $i_\mu>0$, then $\LN\approx \tLN\approx \mu_N$.
\end{corollary}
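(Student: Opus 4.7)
The plan is to reduce everything to the single estimate $T_N(D,D^*)\lesssim \widetilde{\mu}(N)=\sum_{j=1}^N \mu_j/j$, valid under either of the two hypotheses, and then combine with Theorem \ref{Th3} and known lower bounds. Throughout, one uses that $\mu_j$ is non-decreasing in $j$, so that $\sum_{j=1}^N \mu_j/j\leq \mu_N(1+\ln N)$.

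First I would treat the two cases. If $\cB$ is quasi-greedy, then $\hg_j\lesssim 1$, so Lemma \ref{L_TDdg} gives directly
\[
T_N(D,D^*)\;\leq\;\sum_{j=1}^N\frac{\hg_j\,\mu_j}{j}\;\lesssim\;\sum_{j=1}^N\frac{\mu_j}{j}.
\]
If instead $\Been$ has the property $\bDs$, so that $d(j)\,D^*(j)\approx j$, then I would use the chain $T_N(D,D^*)\leq U_N(D,D^*)=\sum_{j=1}^N D(j)D^*(j)/j^2$ from Lemma \ref{Cor2.13}, combined with $D(j)\leq \mu_j\,d(j)$, to obtain $D(j)D^*(j)\leq\mu_j\,d(j)D^*(j)\lesssim \mu_j\,j$, and hence again
\[
T_N(D,D^*)\;\leq\;U_N(D,D^*)\;\lesssim\;\sum_{j=1}^N\frac{\mu_j}{j}\;\leq\;\mu_N(1+\ln N).
\]

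With the estimate $T_N(D,D^*)\lesssim \mu_N\ln(N+1)$ in hand, Theorem \ref{Th3} (applied to $\OT_N\leq T_N$) controls $K_N,\LN,\tLN,\LN^*,\tLN^*$ by the same quantity; for $g_N^*$ and $\mu_N^*$ I would invoke the known lower bounds $\tLN^*\gtrsim\max\{g_N^*,\mu_N^*\}$ cited from \cite[Prop.~1.1]{BBG2016}. This yields \eqref{maxKLmu}, which in turn implies \eqref{LNlog}. The left-hand inequality $\mu_N\lesssim \tLN$ in \eqref{LLmuN} is the known lower bound from \cite[Prop.~1.1]{BBG2016}, completing that chain. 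For the final statement, if $i_\mu>0$ then Proposition \ref{p2} gives $\widetilde\mu(N)\approx \mu_N$, so the upper bound sharpens to $\LN\lesssim \mu_N$, and combined with $\mu_N\lesssim\tLN\leq\LN$ we conclude $\LN\approx \tLN\approx \mu_N$.

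The main obstacle is handling the $\bDs$ case, where we cannot appeal to Lemma \ref{L_TDdg} through boundedness of $\hg_j$; the key observation is that one may bypass the quasi-greedy constants altogether by passing through $T_N\leq U_N$ and exploiting the relation $d(j)D^*(j)\approx j$ to extract a factor of $\mu_j$. Everything else is routine bookkeeping.
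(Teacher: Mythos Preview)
Your proposal is correct and follows essentially the same route as the paper: in the quasi-greedy case you both invoke Lemma \ref{L_TDdg} with $\hg_j\lesssim 1$, and in the $\bDs$ case you both bound $T_N(D,D^*)$ termwise by $\mu_j/j$ via $D(j)D^*(j)/j\lesssim \mu_j$ (the paper writes this as $D(j)\Delta D^*(j)\leq D(j)D^*(j)/j\approx D(j)/d(j)\leq\mu_j$ using $D^*\in\Wqc$ directly, while you route through $T_N\leq U_N$ from Lemma \ref{Cor2.13}, which is the same inequality). The remaining appeals to Theorem \ref{Th3}, the lower bounds from \cite[Prop.~1.1]{BBG2016}, and Proposition \ref{p2} for the $i_\mu>0$ case are identical.
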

\begin{proof}
 (i) If  $C_q=\sup_{j\geq1}\hg_j<\infty$, then \eqref{TNgmu} gives
   \Be
    T_N  (D,D^*) \leq \,C_q\, \sum_{j=1}^N \frac{\mu_j}{j} \leq\,C_q\, \mu_N \,(1+\ln N)\,.
    \label{Tmulog}\Ee
The assertions now follow from Theorem \ref{Th3} and the lower bounds in \cite[Prop 1.1]{BBG2016}.
		
		\sline (ii) Assuming property $\bDs$, and using that $D^*\in \Wqc$, one has
		\Be D(j)\Delta D^*(j)\le D(j)\frac{ D^*(j)}{j}\approx
D(j)/d(j)\le \mu_j, \quad j\in \mathbb N .\label{DDsmu}\Ee
Thus, also in this case we deduce $T_N  (D,D^*)\lesssim\sum_{j=1}^N\mu_j/j\leq \mu_N(1+\ln N)$.
\end{proof}

\

As a consequence we obtain a criterion for
$\OT_N  (D,D^*) \lesssim \ln(N+1)$, which includes in particular all greedy bases.

\begin{corollary} \label{pro-7-5-new}
 Let $\Been$ be a complete
		s-biorthogonal system in $\SX$. If $\mathcal B=\{\be_n\}_{n=1}^\infty$ is almost greedy, or
	 $\Been$ is bidemocratic, then
	\Be
\max\big\{K_N,\tLN,\LN,g^*_N,\mu^*_N,\tLN^*,\LN^*\big\} \lesssim \OT_N  (D, D^*) \lesssim \ln(N+1)\,.
\label{LLs}\Ee
\end{corollary}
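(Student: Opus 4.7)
The left-hand inequality $\max\{K_N,\tLN,\LN,g^*_N,\mu^*_N,\tLN^*,\LN^*\}\lesssim \OT_N(D,D^*)$ is not the main content: it is precisely what Theorem \ref{Th3} gives for $K_N$, $\LN$, $\LN^*$, $\tLN$, $\tLN^*$, together with the standard lower bound $\tLN^*\gtrsim \max\{g^*_N,\mu^*_N\}$ from \cite[Prop.~1.1]{BBG2016}. So the real task is to verify $\OT_N(D,D^*)\lesssim \ln(N+1)$ under each of the two hypotheses. I plan to split the argument accordingly.

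For an almost greedy basis, I would use that almost greedy is equivalent to quasi-greedy plus democratic. Democracy gives $D(N)\approx d(N)$, hence the superdemocracy parameter $\mu_N=\sup_{n\leq N} D(n)/d(n)$ is bounded. Quasi-greediness gives $\hg_N=O(1)$. Both hypotheses of Corollary \ref{pro-new1} are then satisfied (in fact either one alone suffices), and the estimate $T_N(D,D^*)\lesssim \mu_N\ln(N+1)\lesssim \ln(N+1)$ follows. Since $\OT_N\leq T_N(D,D^*)$ by definition, this settles the almost greedy case.

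For the bidemocratic case the most direct route bypasses Corollary \ref{pro-new1} altogether and uses the chain of inequalities in Lemma \ref{Cor2.13}. By Lemma \ref{p0} both $D$ and $D^*$ lie in $\Wqc$, so Lemma \ref{Cor2.13} gives
\[
\OT_N(D,D^*)\;\leq\;U_N(D,D^*)\;=\;\sum_{j=1}^N \frac{D(j)\,D^*(j)}{j^2}.
\]
The bidemocracy hypothesis \eqref{bidem} states $D(j)D^*(j)\leq cj$ for every $j$, so the right-hand side is bounded by $c\sum_{j=1}^N \tfrac{1}{j}\leq c(1+\ln N)$, which completes the argument.

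There is no serious obstacle here: both cases reduce to a one-line estimate once the right tool (Corollary \ref{pro-new1} or Lemma \ref{Cor2.13}) is invoked. The only subtlety worth emphasizing is that in the bidemocratic case one should resist the temptation to route through property $\bDs$ and $\mu_N$; although bidemocracy combined with property $\bDs$ does force $\mu_N=O(1)$, working directly with $U_N$ is shorter and avoids a detour.
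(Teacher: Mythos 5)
Your proof is correct, and the almost greedy branch follows the paper's route exactly: the paper's one‑line proof is ``This follows from \eqref{maxKLmu}, using $\mu_N\approx 1$,'' i.e.\ both hypotheses are funneled through Corollary~\ref{pro-new1}. Where you diverge is the bidemocratic branch. The paper handles it by noting that bidemocracy implies property $\bDs$ (via Lemma~\ref{p0bis}: $N\le D(N)\,\ld^*(N)$ together with $D(N)D^*(N)\le cN$ forces $D^*\approx d^*$ and $D\approx d$, hence $\mu_N\approx 1$ and $D^*\approx d'$), and then invokes Corollary~\ref{pro-new1}(ii). You instead go straight to Lemma~\ref{Cor2.13}, using $D,D^*\in\Wqc$ (Lemma~\ref{p0}) to get $\OT_N(D,D^*)\le U_N(D,D^*)=\sum_{j\le N}D(j)D^*(j)/j^2\le c\sum_{j\le N}1/j$. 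In substance this is the same estimate that the proof of Corollary~\ref{pro-new1}(ii) unwinds to (compare \eqref{DDsmu}, which bounds $D(j)\Delta D^*(j)$ by $D(j)D^*(j)/j$), but your version is genuinely shorter: it applies the hypothesis $D(j)D^*(j)\le cj$ directly and never needs to verify that bidemocracy entails $\bDs$ and $\mu_N\approx 1$. You correctly flag at the end that the detour through $\bDs$, while available, is avoidable, and your closing caveat is accurate since bidemocracy alone already forces $\mu_N=O(1)$.
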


\begin{proof}
This follows from \eqref{maxKLmu}, using $\mu_N\approx 1$.
\end{proof}

\

We pose two questions.

\bline\textbf{Question 1:} Characterize the systems $\{\be_n,\be^*_n\}$ for which $\OT_N(D,D^*) \lesssim \LN  \ln(N+1)$.

\bline\textbf{Question 2:} Characterize the systems for which $ \max\{\LN ,\LN^*\} \lesssim \ln(N+1).$

\

Concerning Question 1, all the examples we have tested seem to satisfy this property.
Concerning Question 2, $\OT_N (D,D^*) \lesssim\ln (N+1)$ gives a sufficient condition, but we do not know whether it is necessary.

\section{Examples} \label{Examples}

In this section we give explicit examples which  illustrate the essential sharpness of our previous results.

%

\subsection{Example 1: The difference basis in $\ell^1$} \label{Example1}
Let $\{\be_n\}_{n=1}^\infty$ denote the canonical basis in $\ell^1(\SN)$, and consider the system
\Be \label{Eq-8-1}
\bx_1 = \be_1\,, \quad \bx_n = \be_n - \be_{n-1}\,, \ n=2,3, \dots
\Ee
This is a monotone basis in $\SX=\ell^1$, sometimes called the \textbf{difference basis}.
Observe that for finitely supported real scalars $\{b_n\}_{n=1}^\infty$ one has
\Be \label{Eq-8-2}
\Big\| \sum_{n=1}^\infty b_n \bx_n\Big\| = \sum_{n=1}^\infty |b_n - b_{n+1}|\,.
\Ee
In particular, $\|\bx_1\|=1$ and $\|\bx_n\|=2$ if $n\geq 2$. The dual system consists of the $\ell^\infty$-vectors $\bx_n^*=\sum_{m=n}^\infty\be_m^*$,
so for  $\{c_n\}\in c_{00}$ it holds that
\Be \label{Eq-8-6}
\Big\| \sum_{n=1}^\infty c_n \bx_n^*\Big\|_* =
\sup_{n\geq 1} \Big|\sum_{j=1}^n c_j\Big|\,.
\Ee
The system $\{\bx_n^*\}_{n=1}^\infty$ is called the \textbf{summing basis}; see e.g. \cite[p.20]{LZ}.

\begin{lemma} \label{L8-1}
	For $\{\bx_n,\bx^*_n\}_{n=1}^\infty$ as above and  $\,N=1,2,3, \dots$, we have
	\sline (i) $d(N) =1 \quad \mbox{and} \quad D(N) =2N\,$
	\sline (ii) $d^*(N)=1\quad$ and $\quad D^*(N)=N$.
\end{lemma}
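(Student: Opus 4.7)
My plan is to reduce everything to simple combinatorial/arithmetic calculations using the two explicit norm formulas \eqref{Eq-8-2} and \eqref{Eq-8-6}. That is, for any $A \subset \SN$ with $|A|=N$ and $\bfe \in \cY$, setting $b_n = \e_n$ if $n \in A$ and $b_n = 0$ otherwise, I have
\[
\|\bone_{\bfe A}\| = \sum_{n=1}^\infty |b_n - b_{n+1}|, \qquad \|\bone_{\bfe A}^*\|_* = \sup_{n\geq 1}\Big|\sum_{j=1}^n b_j\Big|,
\]
so both quantities are easily computable as functions of $A$ and $\bfe$.

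For (i), the upper bound $D(N) \leq 2N$ follows by the triangle inequality $|b_n - b_{n+1}| \leq |b_n| + |b_{n+1}|$, which upon summation gives $\|\bone_{\bfe A}\| \leq 2\sum_n |b_n| - |b_1| \leq 2N$. Equality is attained by choosing $A$ spaced apart from $1$ and with no two consecutive integers, e.g.\ $A = \{2,4,\ldots,2N\}$ and $\bfe \equiv 1$: then every position $n \in \{1,2,\ldots,2N\}$ contributes exactly $1$ to the sum, so $\|\bone_A\| = 2N$. For the lower bound $d(N) \geq 1$, note that each term $|b_n - b_{n+1}| \in \{0,1,2\}$, so the sum is a non-negative integer, and it is strictly positive since $\bone_{\bfe A} \neq 0$. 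Equality $d(N)=1$ is achieved by $A = \{1,\ldots,N\}$ with $\bfe\equiv 1$: the only nonzero transition is $|b_N - b_{N+1}| = 1$.

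For (ii), the upper bound $D^*(N) \leq N$ is trivial since any partial sum $|\sum_{j=1}^n b_j|$ is bounded by $|A| = N$; equality is achieved by $A = \{1,\ldots,N\}$, $\bfe\equiv 1$, where the partial sum at $n=N$ equals $N$. For the lower bound $d^*(N) \geq 1$, take $n = \min A$: then $|\sum_{j=1}^n b_j| = |\e_{\min A}| = 1$. To see $d^*(N) \leq 1$, order $A = \{n_1 < n_2 < \ldots < n_N\}$ and choose alternating signs $\e_{n_k} = (-1)^{k+1}$; then the partial sums $\sum_{j \in A,\, j\leq n} \e_j$ only take the values $0$ and $1$, so $\|\bone^*_{\bfe A}\|_* = 1$.

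I do not foresee any serious obstacle: the only subtle point is the attainment example for $D(N) = 2N$, where one must avoid placing consecutive integers in $A$ (otherwise transitions "cancel" and one loses a factor). Starting the example at $2$ rather than $1$ also matters, since $|b_0 - b_1|$ is not included in the sum \eqref{Eq-8-2}, so placing $1 \in A$ would cost one unit in the final count.
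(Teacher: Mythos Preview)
Your proof is correct and follows essentially the same approach as the paper: direct computation from the explicit norm formulas \eqref{Eq-8-2} and \eqref{Eq-8-6}, with the same test configurations $A=\{2,4,\ldots,2N\}$ and $A=\{1,\ldots,N\}$ for part~(i). For part~(ii) the paper simply defers to \cite[Example 5.1]{BBG2016}, whereas you supply the short alternating-sign argument for $d^*(N)=1$ explicitly; this is a minor expository difference only.
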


\begin{proof}
	For $A \subset \mathbb N, |A|=N$ and $\bfe\in \cY = \{\pm 1\}$, if follows from (\ref{Eq-8-2}) that
	\Be
	1 \leq \|\bone_{\bfe A}\| = \Big\|\sum_{n\in A} \e_n \bx_n \Big\| \leq 2N\,.
	\label{1Adiff}\Ee
	Using again \eqref{Eq-8-2}, it is easily seen that the right equality in \eqref{1Adiff} is attained by testing with $\sum_{j=1}^N \bx_{2j}$, while
	the left equality is attained with $\sum_{j=1}^N \bx_{j}$. This shows the statements in (i). The statements in (ii) about the summing bases are similar (and can also be found in \cite[Example 5.1]{BBG2016}).
\end{proof}

\begin{proposition} \label{Pro-8-3}
The system $\{\bx_n,\bx_n^*\}_{n=1}^\infty$ satisfies $S_N(D,D^*)=\OT_N(D,D^*)=2N$. Moreover,
 $$K_N =K_N^*=2N\,, \quad \tLN  =\tLN ^*= 1+4N\,, \quad \mbox{and} \quad \LN  = \LN ^*=1+6N\,.$$
In particular, equalities are attained everywhere in Theorem \ref{Th3}.
\end{proposition}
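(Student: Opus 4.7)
My plan has three phases: harvest the upper bounds from Theorem~\ref{Th3} by computing the democracy-derived quantities, then furnish explicit witnesses for the matching lower bounds, and finally transfer to the starred constants by duality. For the first phase, Lemma~\ref{L8-1} gives $D(N)=2N$ and $D^*(N)=N$, hence $\Dt D\equiv2$ and $\Dt D^*\equiv1$; plugging into \eqref{SN}--\eqref{OTN} yields $S_N(D,D^*)=T_N(D,D^*)=T_N(D^*,D)=2N$ and so $\OT_N(D,D^*)=2N$. Since $D,D^*$ are affine (hence concave), Theorem~\ref{Th3} delivers at once the upper bounds $K_N\le 2N$, $\tLN\le1+4N$, $\LN\le1+6N$, together with their starred versions.

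For $K_N\ge 2N$ I test $x=\be_{2N}=\sum_{n=1}^{2N}\bx_n$ against $A=\{2,4,\ldots,2N\}$: one computes $P_Ax=\sum_{j=1}^{N}(\be_{2j}-\be_{2j-1})$ whose $\ell^1$-norm is $2N$ while $\|x\|=1$. For the Lebesgue constants I use perturbed vectors of the shape
\[
 x_\delta \;=\; \sum_{j=1}^N(1+\delta)\,\bx_{2j}\;+\;\sum_{n\in F}\epsilon_n\,\bx_n,
\]
with $F$ disjoint from $\{2,\ldots,2N\}$ and signs $\epsilon_n\in\{\pm1\}$ chosen so that any greedy set of order $N$ must equal $\Gamma=\{2,4,\ldots,2N\}$. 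Using the telescoping identity $\sum c_n\bx_n=\sum(c_n-c_{n+1})\be_n$, the sign pattern I prescribe makes $x_\delta-G_Nx_\delta$ split in the $\be$-basis into an ``alternating $\pm1$'' block of length $2N$, an ``amplified $\pm2$'' block of length $2N$, and an isolated boundary coordinate, producing $\|x_\delta-G_Nx_\delta\|_{\ell^1}=1+6N$. A competing $N$-term approximation $z=\sum(-2)\bx_{m}$ placed at the interior positions of the amplification block cancels exactly those doubled coordinates and leaves only an $O(\delta)+1$ residual, so $\sigma_N(x_\delta)\le 1+O(\delta)$ and $\LN\ge1+6N$ after $\delta\downarrow0$. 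For $\tLN\ge1+4N$ I shorten the tail by one $\bx_{m}$, so that the expansional coefficients $\bx_n^*(x_\delta)=\pm1$ are themselves enough to cancel the amplified block, the $N=1$ case being witnessed by $\bx_1+(1+\delta)\bx_2+\bx_3-\bx_4$ which gives $\|x-G_1x\|_{\ell^1}=5$ and $\tilde\sigma_1(x)\le 1+2\delta$.

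The starred constants come via the natural identification $\widehat\SX=c$, under which $\{\bx_n^*,\bx_n\}$ is the (summing, difference) pair; by Lemma~\ref{L8-1}(ii) its democracy functions are again $2N$ and $N$, so the same three constructions applied after swapping the roles of $\bx$ and $\bx^*$ yield $K_N^*=2N$, $\tLN^*=1+4N$, $\LN^*=1+6N$. The assertion that equalities are attained everywhere in Theorem~\ref{Th3} is then just a matter of comparing the matched values $K_N=\OT_N=S_N=2N$ with $\LN=1+3\OT_N$ and $\tLN=1+2\OT_N$. The principal obstacle is the lower-bound construction: one must engineer the sign pattern on $F$ so that the telescoping produces \emph{exactly} $1+6N$ (respectively $1+4N$) and, more delicately, verify that no $N$-term competitor beats the $1+O(\delta)$ bound on $\sigma_N$ (respectively that the prescribed-coefficient constraint $\lambda_n=\bx_n^*(x)$ still permits $\tilde\sigma_N\approx1$), since in the expansional case the correction coefficients are no longer free.
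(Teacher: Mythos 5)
Your computation of $S_N=T_N(D,D^*)=T_N(D^*,D)=\OT_N=2N$ is correct, and your application of Theorem~\ref{Th3} to obtain the upper bounds $K_N\le2N$, $\tLN\le1+4N$, $\LN\le1+6N$ (and the starred versions) matches the paper. Your lower bound for $K_N$ — testing $x=\sum_{n=1}^{2N}\bx_n=\be_{2N}$ (norm $1$) against $P_{\{2,4,\ldots,2N\}}x=\sum_{j=1}^N(\be_{2j}-\be_{2j-1})$ (norm $2N$) — is a clean, self-contained argument that the paper does not give explicitly: the paper instead deduces $K_N=2N$ from the chain $K_N^*\le K_N\le 2N$ together with the known value $K_N^*=2N$ from \cite[Prop.~5.1]{BBG2016}. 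So that part of your plan is a genuine (and arguably nicer) alternative.

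The gap is precisely where you flag it: the lower bounds $\LN\ge1+6N$ and $\tLN\ge1+4N$ are never established for general $N$. You describe a perturbed vector $x_\delta=\sum_{j=1}^N(1+\delta)\bx_{2j}+\sum_{n\in F}\epsilon_n\bx_n$ and a program for choosing $F$ and the signs so that the telescoping $\sum c_n\bx_n=\sum(c_n-c_{n+1})\be_n$ produces $\ell^1$-mass exactly $1+6N$ on $x_\delta-G_Nx_\delta$ while a suitable competitor $z$ drives $\sigma_N(x_\delta)$ down to $1+O(\delta)$. Only the $N=1$ witness for $\tLN$ is verified (which does check out: $x=\bx_1+(1+\delta)\bx_2+\bx_3-\bx_4$ gives $\|x-G_1x\|=5$ and $x-P_{\{4\}}x$ has norm $1+2\delta$). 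You explicitly call the general construction ``the principal obstacle,'' and it remains one: the set $F$, the sign pattern $\epsilon_n$, the placement of the amplified block, and the verification that no better $N$-term competitor exists are all left open. The paper avoids the $\delta$-perturbation entirely: for $\tLN$ it takes the unperturbed $x=\sum_{j=1}^{2N+1}\bx_j+\sum_{j=2N+1}^{3N}\bx_{2j}$ with $G_Nx=\sum_{j=1}^N\bx_{2j}$, getting $\tilde\sigma_N(x)\le\|\be_{2N+1}\|=1$ and $\|x-G_Nx\|=4N+1$ by direct telescoping; for $\LN$ it takes $x=\bx_1+\sum_{j=1}^N(\bx_{4j-2}+\bx_{4j-1}-\bx_{4j}+\bx_{4j+1})$ with $G_Nx=\sum_{j=1}^N\bx_{4j-2}$, obtaining $\|x-G_Nx\|=1+6N$, and the competitor $y=2\sum_{j=1}^N\bx_{4j}$ gives $x+y=\sum_{j=1}^{4N+1}\bx_j=\be_{4N+1}$ so $\sigma_N(x)\le1$. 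These closed-form witnesses are what your plan needs to supply but does not. Finally, your proposed transfer to $K_N^*$, $\tLN^*$, $\LN^*$ ``by swapping roles'' is also not carried through; the norm in $\widehat\SX\cong c$ is the summing-basis norm \eqref{Eq-8-6}, not the $\ell^1$-norm, so the constructions must be re-derived there (the paper simply cites \cite[Prop.~5.1]{BBG2016}).
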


\begin{proof}
From Lemma \ref{L8-1} we have
	$$
	T_N(D,D^*)=\sum_{j=1}^N\frac{D(j)}{j}\Dt D^*(j)=2N=T_N(D^*,D)=S_N(D,D^*)\,,
	$$
establishing the first assertion. Theorem \ref{Th3} then implies\[
K^*_N\leq K_N\leq 2N, \quad \tLN,\tLN^* \leq 1+4N,\mand \LN ,\LN^* \leq 1+6N.\]
The equalities for $K_N^*$, $\tLN^*$ and $\LN^*$ were shown in \cite[Proposition 5.1]{BBG2016}.
We show here that equalities are attained also for $\tLN$ and $\LN$.
First consider
	$$
	x = \sum_{j=1}^{2N+1} \bx_j + \sum_{j=2N+1}^{3N} \bx_{2j}\,.
	$$
	Then, $
	\widetilde \sigma_N(x) \leq \Big\| \sum_{j=1}^{2N+1} \bx_j\Big\| = 1$.
	However, choosing $G_Nx=\sum_{j=1}^{N} \bx_{2j}$ we have
	\Beas
	\|x-G_Nx\| &=& \Big\|\sum_{j=1}^{N+1} \bx_{2j-1} + \sum_{j=2N+1}^{3N} \bx_{2j} \Big\|
=  4N+1\,.
	\Eeas
	Therefore,
	$	\tLN  \geq {\|x-G_Nx\|}/{ \widetilde \sigma_N(x)} \geq4N+1\,$.	Finally, consider
	$$
	x = \bx_1 + \sum_{j=1}^{N} \bx_{4j-2} + \sum_{j=1}^{N} \bx_{4j-1} - \sum_{j=1}^{N} \bx_{4j} + \sum_{j=1}^{N} \bx_{4j+1}\,.
	$$
	Taking $ G_Nx=\sum_{j=1}^{N} \bx_{4j-2}$ we obtain $\|x-G_Nx\| = 1+6N.$ On the other hand, choosing $y=2\,\sum_{j=1}^{N} \bx_{4j} \in \Sigma_N$, we have \[\sigma_N(x)\,\leq\,\|x+y\|=\big\| \sum_{j=1}^{4N+1} \bx_{j}\big\|\,=\,1.\]
Thus, $\LN \geq {\|x-G_Nx\|}/{ \sigma_N(x)} \geq 1+6N$.
\end{proof}

\medskip

\subsection{Example 2: The Lindenstrauss basis and its dual} \label{Example2}

Let $\{\be_n\}_{n=1}^\infty$ denote the canonical basis in $\ell^1(\SN)$, and consider the vectors
\Be \label{Eq-8-7}
\bx_n = \be_n - \frac12\,\be_{2n+1} - \frac12\,\be_{2n+2}\,, \qquad n=1,2,3, \dots
\Ee
The system $\mathcal L = \{\bx_n\}_{n=1}^\infty$ was introduced by J. Lindenstrauss in \cite{L}.
It is a basic sequence of $\ell^1$, hence a basis of a subspace $\mathbb D=\Cspan{\mathcal L}$ in $\ell^1$.
To describe the dual system we consider the following vectors in $c_0$:
\Be
\by_n:=\sum_{j=0}^n 2^{-j}\be_{\ga_j(n)} ,\quad n=1,2,3,\ldots
\label{byn}\Ee
where $\ga_0(n)=n$ 
and $\ga_{j+1}(n)=\lfloor \frac{\ga_j(n)-1}2\rfloor$, $j \geq 0$
(with the convention
$\be_\ga=\bz$ if $\ga\leq0$). It is shown in \cite[Example 2]{HR1970} that  $\mathcal{Y}=\{\by_n\}_{n=1}^\infty$ is a Schauder basis in $c_0$ with dual vectors $\by^*_n=\bx_n$. In particular, there exists some $c>0$ such that
\[
c\,\|y\|_{c_0} \leq \sup_{{x\in\SD}\atop{\|x\|_{\ell^1}=1}} |\langle x,y\rangle| =\|y\|_{\SD^*}\,\leq \,\|y\|_{c_0},\quad y\in c_0\,;
\]see e.g. \cite[Exercise 6.12]{Fabian}. So we can identify $\widehat{\SD}$ and $c_0$ with equivalent norms.
We summarize a few other properties of the biorthogonal pair $\{\cL,\mathcal{Y}\}$.
\begin{itemize}
	\item
	$\mathcal L$ is conditional in $\mathbb D$, and $\mathbb D$ has no unconditional basis; \cite[p. 454-457]{Sin}.
	\item
	$\mathcal L$ is a quasi-greedy basis in $\mathbb D$, with $\sup_{N\geq1}\|G_N\| \leq 3$; see \cite{DM}.
	\item
	$\mathcal Y$  is not quasi-greedy in $c_0$; see \cite{DM}.
	\item  $K_N(\cL, \SD) \approx \ln(N+1)\,, N= 1, 2, 3, \dots$; see\footnote{This is shown in \cite{GHO2013} for the system $\{\be_n - (\be_{2n} +\be_{2n+1})/2\}_{n=1}^\infty$, but the same arguments, with obvious modifications, work for the basis in (\ref{Eq-8-7}).} \cite[$\S6$]{GHO2013}.
\end{itemize}

\begin{theorem}\label{ThLind}
 For the Lindenstrauss basis $\mathcal L$ in $\SD$ we have $\OT_N(D,D^*)\approx\ln(N+1)$. Moreover,
\Be
\tLN\approx 1,\mand \LN\approx \LN^*\approx\tLN^*\approx K_N\approx g^*_N\approx\mu^*_N\approx \ln(N+1).
\label{Lindbounds}\Ee
\end{theorem}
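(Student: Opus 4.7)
The plan is to first compute the four democracy functions explicitly using the formula \eqref{Eq-8-2} for $\|\sum b_n\bx_n\|_{\ell^1}$ and the identification $\bx_n^*=\by_n$ from \eqref{byn}. The triangle inequality immediately gives $D(N)\le 2N$. For the matching lower bound $d(N)\ge N/2$ I would use a coordinate-counting argument: in $\sum_{n\in A}\e_n\bx_n$ the coordinate $\be_m$ receives contributions only from $m$ itself (if $m\in A$) and from $p(m):=\lfloor(m-1)/2\rfloor$ (if $p(m)\in A$), so with $\e\in\{\pm 1\}$ any occupied coordinate has modulus at least $1/2$ while at least $|A|$ coordinates are occupied, giving $\|\bone_{\bfe A}\|_{\ell^1}\ge |A|/2$. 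Hence $\mu_N\asymp 1$. For the dual, taking $A$ to be the strict descendants of a fixed node $m$ down to level $k$ (so $|A|=2^{k+1}-2\sim N$), the coefficient of $\be_m$ in $\sum_{A}\bx_n^*$ equals $\sum_{j=1}^k 2^j\cdot 2^{-j}=k\asymp\log N$, yielding $D^*(N)\gtrsim\log(N+1)$; the converse $D^*(N)\lesssim\log(N+1)$ follows from the dyadic summation bound $\sum_j\min(2^j,N)\cdot 2^{-j}\lesssim\log(N+1)$. Finally $d^*(N)\asymp 1$ by choosing $N$ widely separated indices with disjoint $\bx_n^*$-supports. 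Thus $\mu_N^*\asymp\log(N+1)$.

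\textbf{Paragraph 2 (Upper bounds via Theorem \ref{Th3} and quasi-greediness).} Using $D(j)/j\le 2$, I would bound $T_N(D,D^*)\le 2\sum_{j=1}^N\Dt D^*(j)=2D^*(N)\lesssim\log(N+1)$, hence $\OT_N(D,D^*)\lesssim\log(N+1)$. Combined with $K_N\le\OT_N$ from Theorem \ref{Th3} and the known estimate $K_N\asymp\log(N+1)$ of \cite{GHO2013}, this forces $\OT_N\asymp\log(N+1)$, and Theorem \ref{Th3} delivers $\max\{\LN,\LN^*,\tLN^*\}\le 1+3\OT_N\lesssim\log(N+1)$. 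The upper bound $\tLN\lesssim 1$ is strictly stronger than what Theorem \ref{Th3} would give here; for this I would invoke that $\cL$ is quasi-greedy in $\SD$ with $\sup_N\|G_N\|\le 3$ (per \cite{DM}), which by the classical equivalence is the same as $\sup_N\tLN<\infty$ (e.g.\ \cite{Wo}).

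\textbf{Paragraph 3 (Lower bounds and the main obstacle).} For the lower bounds I would rely on the general estimates of \cite[Prop.~1.1]{BBG2016}, already packaged in Corollary \ref{pro-new1}: $\LN\gtrsim K_N\asymp\log(N+1)$, and $\tLN^*\gtrsim\mu_N^*\asymp\log(N+1)$, whence $\LN^*\ge\tLN^*\asymp\log(N+1)$. The matching estimate $\mu_N^*\asymp\log(N+1)$ is already known from Paragraph 1. The main obstacle lies in the lower bound $g_N^*\gtrsim\log(N+1)$: the trivial upper bound $g_N^*\le 1+g_N^{c*}\le 1+\tLN^*\lesssim\log(N+1)$ is automatic, but none of the general lower bounds translates directly into a lower bound on $g_N^*$. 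To overcome this I would construct, for each $N$, an explicit witness $y\in\widehat\SD$ realizing $\|G_n^*y\|\gtrsim\log(N+1)\cdot\|y\|$ for some $n\le N$. The natural strategy is to exploit the ``upward-pointing'' support of $\bx_n^*=\by_n$: choose $y$ whose basis coefficients in $\{\bx_n^*\}$ are supported on a deep subtree $A$ below a node $m$ together with an extra mass at $m$ calibrated so that the coordinates of $y$ cancel along the entire chain of ancestors of $m$, yielding $\|y\|_{c_0}=O(1)$, while the greedy truncation $G_n^*y$ removes the cancellation and leaves a coefficient of order $\log N$ reinforced at $m$. This quantifies the non-quasi-greediness of $\mathcal Y$ in $c_0$ proved in \cite{DM}, and mirrors the conditionality witness used in \cite[\S6]{GHO2013} to establish $K_N(\cL,\SD)\asymp\log(N+1)$.
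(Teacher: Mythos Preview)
Your overall architecture matches the paper's proof: compute $D,D^*$ (Lemma~\ref{Lemma8-4}), feed them into Theorem~\ref{Th3} via $T_N(D,D^*)=2D^*(N)$ for all the upper bounds, and pick up the lower bounds from $K_N\asymp\log(N+1)$ (\cite{GHO2013}) and $\mu_N^*$. Your direct dyadic estimate $\sum_j\min(2^j,N)2^{-j}\lesssim\log(N+1)$ for the upper bound on $D^*(N)$ is a nice alternative to the paper's route through Proposition~\ref{newlema} (which uses $d(N)\gtrsim N$ and quasi-greediness in \eqref{Dgd}); both are valid.

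There are, however, two genuine slips in the sketch.

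\emph{(a) The argument for $d^*(N)\asymp 1$ fails.} The support of $\bx_n^*=\by_n$ is the ancestor chain from $n$ up to one of the two roots $1$ or $2$, so for $N\ge 3$ you cannot find $N$ indices with pairwise disjoint $\by_n$-supports. The paper instead invokes the alternating sum from \cite{DM}: $\big\|\sum_{i=1}^{2^{k+1}-2}(-1)^i\by_i\big\|_{c_0}=1$ (see \eqref{Eq-8-9}), and the Schauder property of $\mathcal Y$ then gives $d^*(N)\lesssim 1$ for all $N$. You need this for $\mu_N^*\asymp\log(N+1)$.

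\emph{(b) Quasi-greediness is not the same as $\sup_N\tLN<\infty$.} The latter is almost-greediness and requires democracy as well (e.g.\ the canonical basis of $\ell^p\oplus\ell^q$ is unconditional but has unbounded $\tLN$). The fix is immediate since you already have $d(N)\approx D(N)\approx N$, but the ``classical equivalence'' you cite is misstated.

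Finally, the ``main obstacle'' $g_N^*\gtrsim\log(N+1)$ has a much shorter resolution than the calibrated-cancellation construction you outline. The paper simply combines \eqref{Eq-8-8} and \eqref{Eq-8-9}: set $y=\sum_{i=1}^{2^{k+1}-2}(-1)^i\by_i$, so $\|y\|\asymp 1$ and every coefficient is $\pm1$, hence the even (or odd) indices form a greedy set of size $\sim 2^k$. Since the two halves add up to $\sum_i\by_i$, which has norm $\gtrsim k$ by \eqref{Eq-8-8}, one half has norm $\gtrsim k$, giving $g_{2^k}^*\gtrsim k\asymp\log(2^k)$.
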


\BR The results for the system $\mathcal Y$ seem to be new. In fact, in this example, Theorem \ref{Th3} 
performs better than Theorems 1.2 and 1.3 from \cite{BBG2016}, which would only yield the non-optimal bound
$\LN(\mathcal{Y},c_0)\lesssim [\ln (N+1)]^2$.
\ER

We only need upper  estimates for $D$ and $D^*$, but we shall actually prove more.

\begin{lemma} \label{Lemma8-4} For the Lindenstrauss basis $\mathcal L$ in $\SD$ we have the following

\sline (i)    $  d(N)\approx N\;$ and $\;D(N) =2N.$

\sline (ii)  $d^*(N)\approx 1\;$ and $\;D^{*}(N) \approx \ln (N+1).$
\end{lemma}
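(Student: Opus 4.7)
The plan is to exploit the binary tree structure underlying $\mathcal L$: each $n\geq 1$ has parent $p(n)=\lfloor(n-1)/2\rfloor$ and children $2n+1,2n+2$, so that $1$ and $2$ are the two roots. I will write $m\preceq n$ when $m$ is an ancestor of $n$ (or $m=n$), and $j_n(m)$ for the depth of $n$ below $m$. In these terms, the formula \eqref{byn} says that $\by_n$ has a $1$ at position $n$ and geometrically decaying entries $2^{-j}$ at its $j$-th ancestor.

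For (i), I would obtain $D(N)\leq 2N$ from the triangle inequality together with $\|\bx_n\|_1\leq 2$, and attain equality by choosing $A=\{n_1<\cdots<n_N\}$ sufficiently sparse (e.g.\ $n_{k+1}>2n_k+2$) so that the supports $\{n_k,2n_k+1,2n_k+2\}$ are pairwise disjoint: then $\|\sum_k\bx_{n_k}\|_1=\sum_k\|\bx_{n_k}\|_1=2N$. For $d(N)\gtrsim N$, I would compute the coefficient of $\be_n$ (for $n\in A$) in $v=\sum_{m\in A}\e_m\bx_m$: it equals $\e_n$ if $p(n)\notin A$ and $\e_n-\tfrac12\e_{p(n)}$ otherwise, so $|v_n|\geq 1/2$ in either case, and summing over $n\in A$ gives $\|v\|_1\geq N/2$.

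For (ii), I note that in $w=\sum_{n\in A}\e_n\by_n$ the coefficient of $\be_m$ is
\[
  w_m \;=\; \sum_{n\in A:\,m\preceq n}\e_n\,2^{-j_n(m)}.
\]
Since $m$ has at most $2^j$ descendants at depth $j$, while also $|A|=N$, I can bound $|w_m|\leq \sum_{j\geq 0}\min(2^j,N)\,2^{-j}\lesssim\ln(N+1)$, and transfer to $\|\cdot\|_{\SD^*}$ using the norm equivalence recalled before the statement to deduce $D^*(N)\lesssim\ln(N+1)$. For the matching lower bound, I would test with $A$ equal to a complete binary subtree of depth $J=\lfloor\log_2(N+1)\rfloor-1$ rooted at node $1$ (padding to $|A|=N$ if needed), with all signs $+1$: the coefficient of $\be_1$ in $w$ then equals $\sum_{j=0}^{J}2^j\cdot 2^{-j}=J+1\asymp\ln(N+1)$.

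Finally, for $d^*(N)\approx 1$: the upper bound comes from choosing $A$ to consist of $N$ nodes at a common depth $D=\lceil\log_2 N\rceil$ in the subtree rooted at $1$; the same counting gives $|w_m|\leq \min(2^k,N)\,2^{-k}\leq 1$ (with $k=D-\mathrm{depth}(m)$), hence $\|w\|_\infty\leq 1$. The lower bound $d^*(N)\gtrsim 1$ is the main subtlety I foresee: for any $A$ and $\bfe$, I would pick $n^*\in A$ maximal in the tree order of $A$, i.e.\ with no descendant of $n^*$ in $A$; since then $n^*$ is the only $n\in A$ with $n^*\preceq n$, the coefficient reduces to $w_{n^*}=\e_{n^*}$, so $\|w\|_\infty\geq 1$ and the equivalence $\|\cdot\|_{\SD^*}\asymp\|\cdot\|_\infty$ on $\widehat{\SD}=c_0$ closes the argument. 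The main obstacle throughout is to handle the combinatorics of the two-rooted tree consistently and to pass back and forth between $\|\cdot\|_{\SD^*}$ and $\|\cdot\|_\infty$ without losing track of constants.
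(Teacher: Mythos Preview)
Your argument is correct and, for part (ii), genuinely different from the paper's. For $D(N)$ and $d(N)$ you do essentially what the paper does (sparse $A$ for the sharp upper bound, the coefficient estimate $|v_n|\geq 1/2$ for the lower bound). For part (ii), however, the paper never touches the tree combinatorics directly: it obtains $D^*(N)\lesssim\ln(N+1)$ by feeding the quasi-greediness of $\mathcal L$ and the bound $d(N)\approx N$ into the general inequality $D^*(N)\leq\sum_{j=1}^N\hat g_j/d(j)$ (Proposition~\ref{newlema}), and it obtains both the lower bound $D^*(N)\gtrsim\ln(N+1)$ and the upper bound $d^*(N)\lesssim 1$ by quoting explicit norm computations from Dilworth--Mitra, namely \eqref{Eq-8-8} and \eqref{Eq-8-9}. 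For the lower bound $d^*(N)\gtrsim 1$ the paper uses the Schauder basis property of $\mathcal Y$ to isolate the term with smallest index, whereas you isolate a tree-maximal index to read off the coefficient $w_{n^*}=\e_{n^*}$ directly.

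What each approach buys: your route is entirely self-contained and elementary, exhibiting explicitly why the $\ln N$ arises (the depth of a balanced subtree) and why $d^*$ stays bounded (a single generation contributes at most~$1$ at every ancestor); it does not need to know in advance that $\mathcal L$ is quasi-greedy. The paper's route is shorter in print because it leverages the general machinery of \S\ref{superdemocacydual} and prior literature, and it illustrates how those tools are meant to be applied. One minor point to tidy in your write-up: for the $D^*$ lower bound you do not actually need to pad to $|A|=N$, since $D^*$ is non-decreasing (Lemma~\ref{p0}); this avoids having to argue that padding does not spoil the coefficient at $\be_1$.
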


\begin{proof}
	$i)$
	Let  $\bone_{\bfe A} = \sum_{n\in A} \e_n \bx_n\,,$ with $|A|=N$, $\bfe\in \cY$. Since $\|\bx_n\|=2,$ one always has $\|\bone_{\bfe A}\| \leq 2N$. To see that this bound is attained consider $$ \bx= \sum_{n=1}^N \bx_{3^n} = \sum_{n=1}^N (\be_{3^n} - \frac12\,\be_{2\cdot 3^n + 1} - \frac12\,\be_{2\cdot 3^n + 2}\,).$$
Since $2\cdot 3^n + 2 < 3^{n+1}$, one deduces that $\|\bx\| = 2N$. Hence $D(N)=2N.$

We now give a lower estimate for $d(N)$. Observe that
\[
\Big\|\sum_{n=1}^M b_n\bx_n\Big\|_{\ell^1}\,=\,|b_1|+|b_2|+
\sum_{n=3}^M\big|b_n-\tfrac12b_{\lfloor\frac{n-1}2\rfloor}\big|\,+\tfrac12\sum_{n=M+1}^{2M+2}\big|b_{\lfloor\frac{n-1}2\rfloor}\big|.
\]
From here it easily follows that $\|\bone_{\bfe A}\|_{\ell^1}\geq |A|/2$, since for $n\in A$ we have $|b_n-\tfrac12b_{\lfloor\frac{n-1}2\rfloor}|\geq 1/2$. Thus\footnote{Slightly more elaborate computations actually lead to $d(N)=N+1$.} \Be
N/2\leq d(N) \leq 2N.\label{dN2}\Ee

	$ii)$ Using \eqref{dN2} and $g_N\leq3$ in Proposition \ref{newlema} yields
	\Be \label{Eq-8-7-1}
	 D^{*} (N)  \leq \,C\ln (N+1).
	\Ee
	The reverse inequality, $D^*(N)\gtrsim\ln(N+1)$ follows from
	\Be \label{Eq-8-8}
	\Big\|\sum_{i=1}^{2^{N+1}-2} \by_i \Big\|_{*} \geq \frac{N}{2}\,;
	\Ee
see (10) in \cite{DM}. To estimate $d^*$ we quote the equality (9) in \cite{DM},
	\Be \label{Eq-8-9}
	\Big\|\sum_{i=1}^{2^{N+1}-2} (-1)^i \,\by_i \Big\|_{c_0}= 1\, .
	\Ee
Since $\mathcal Y$ is a Schauder basis,
this actually implies that $d^{*} (N)\lesssim1$. On the other hand, given any $A\subset\SN$, if we set $n_0=\min A$, then  \[
\|\sum_{n\in A}\e_n\by_n\|_{c_0} \gtrsim \|\by_{n_0}\|_{c_0}=1,
\]
which implies\footnote{Slightly more elaborate computations, using the definition of $\by_n$ in \eqref{byn}, actually give $d^*_{c_0}(N)=1$, and 
also $D^*_{c_0}(N)=\log_2(N+1)$ if $N+1=2^n$.}  $d^{*} (N)\gtrsim 1$.
\end{proof}

\Proofof{Theorem \ref{ThLind}}
\hskip-.5cm By Lemma \ref{Lemma8-4} we have   $D(j) = 2j$, and therefore,
\Be
S_N(D, D^*)=T_N(D, D^*) = 2D^*(N) \approx \ln(N+1).
\label{TN_Lind}\Ee
Thus, Theorem \ref{Th3} gives a logarithmic upper bound for all the quantities in \eqref{Lindbounds}. Also, $\tLN\approx 1$ is known from \cite{DM} (since $\cL$ is quasi-greedy and democratic).

For the lower bounds, first note that $\LN\gtrsim K_N\gtrsim\ln (N+1)$ was shown in \cite[$\S6.1$]{GHO2013}.
Lemma \ref{Lemma8-4} also gives $\mu^*_N\approx \ln (N+1)$. Finally,
$\LN^*\geq\tLN^*\gtrsim g^*_N$, and the estimate $g^*_N\gtrsim \ln (N+1)$ can easily be obtained from \eqref{Eq-8-8} and \eqref{Eq-8-9}.
\ProofEnd

\medskip
	
\subsection{Example 3: The trigonometric system in $L^p(\mathbb T^d)$} \label{Example3}

Consider the system $\mathcal T^d = \{e^{2\pi i k\cdot x}\}_{k\in \mathbb Z^d}$ in the Lebesgue space $L^p(\mathbb T^d), 1 \leq p < \infty$, or in  $C(\mathbb T^d)$ when $p=\infty$. Temlyakov proved in \cite{Tem98trig} that $\LN\approx N^{|\frac1p-\frac12|}$. Here we recover this result as an application of Theorem \ref{Th3} (at least if $p\not=2$).

\begin{proposition} \label{Pro-8-8} For the system $\mathcal T^d$ in $L^p(\mathbb T^d)$ with $1\leq p\leq \infty$,  $p\not=2$, we have
\Be
\OT_N(D,D^*)\approx \LN \approx \tLN\approx K_N \approx\LN^*\approx\tLN^*\approx  N^{|\frac1p-\frac12|}.
\label{LN_Lp}\Ee
\end{proposition}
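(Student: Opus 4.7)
The plan consists of three stages: (a) establishing the asymptotics of the democracy functions $D(N)$ and $D^{*}(N)$ for the trigonometric system, (b) invoking Theorem \ref{Th3} to produce upper bounds for all five quantities, and (c) combining Temlyakov's sharpness result with duality to obtain matching lower bounds.

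For stage (a), I would show
\[
D(N)\,\approx\,N^{\max(1/2,\,1-1/p)}\mand D^{*}(N)\,\approx\,N^{\max(1/2,\,1/p)}, \quad p\in[1,\infty].
\]
The upper bound on $D(N)$ splits into two cases: when $p\le 2$, the embedding $L^p\hookrightarrow L^2$ gives $\|\bone_{\bfe A}\|_p\le \|\bone_{\bfe A}\|_2=\sqrt{N}$; when $p\ge 2$, log-convexity $\|f\|_p\le \|f\|_2^{2/p}\|f\|_\infty^{1-2/p}\le N^{1/p}\cdot N^{1-2/p}=N^{1-1/p}$ suffices. For the lower bound, Khintchine's inequality applied to random signs yields $D(N)\gtrsim\sqrt{N}$, while tensor products of Dirichlet kernels $\prod_{\ell=1}^{d}D_M(x_\ell)$ with $M\approx N^{1/d}$ furnish $D(N)\gtrsim N^{1-1/p}$ when $p>2$. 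The asymptotics of $D^{*}(N)$ follow by identifying the dual system as $\mathcal T^d$ in $L^{p'}$ for $p\in(1,\infty)$, in $L^1(\mathbb T^d)$ for $p=\infty$ (via $C(\mathbb T^d)^{*}\supset L^1$), and in $C(\mathbb T^d)$ for $p=1$, together with the identity $\max(1/2,\,1-1/p')=\max(1/2,\,1/p)$.

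For stage (b), since $D,D^{*}\in\Wqc$, Lemma \ref{Cor2.13} gives
\[
\OT_N(D,D^{*})\,\le\,U_N(D,D^{*})=\sum_{j=1}^N\frac{D(j)D^{*}(j)}{j^2}\,\approx\,\sum_{j=1}^N j^{-1+\min(1/p,\,1-1/p)}.
\]
For $p\ne 2$ the exponent of $j$ lies strictly in $(-1,-1/2]$, so the series grows like its last term, yielding $\OT_N(D,D^{*})\approx N^{|1/p-1/2|}$. Theorem \ref{Th3} then produces simultaneously $K_N,\tLN,\LN,\tLN^{*},\LN^{*}\,\lesssim\,N^{|1/p-1/2|}$.

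For stage (c), Temlyakov's theorem \cite{Tem98trig} supplies the sharp $\LN\gtrsim N^{|1/p-1/2|}$ for $p\ne 2$; duality between $L^p$ and $L^{p'}$, together with the symmetry $|1/p-1/2|=|1/p'-1/2|$, transfers this to $\LN^{*}$. For the remaining quantities one uses the lower bounds compiled in \cite[Prop.~1.1]{BBG2016}: comparing $D(N)$ with the lower democracy $d(N)\approx\sqrt{N}$ (from Rudin--Shapiro-type constructions) yields the superdemocracy ratio $\mu_N\approx N^{1/2-1/p}$ when $p>2$ and, dually, $\mu_N^{*}\approx N^{1/p-1/2}$ when $p<2$, and these plug into lower bounds of the form $\LN\ge\mu_N$, $\tLN\ge\mu_N$, $\tLN^{*}\ge\mu_N^{*}$; the remaining bound on $K_N$ follows either from $K_N\gtrsim\mu_N$ via the analogous partial-sum construction, or from the classical computation of $\|P_A\|_{L^p\to L^p}$ on extremal sets. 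The main obstacle I foresee is stage (a), especially producing the sharp lower bounds for $D(N)$ in arbitrary dimension $d$ and correctly identifying the dual system at the endpoints $p=1,\infty$; once those are in hand, stages (b) and (c) are essentially mechanical applications of Theorem \ref{Th3} and standard duality.
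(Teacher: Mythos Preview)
Your approach is correct and follows the paper's for the upper bounds: both bound $D(N)\le N^{\frac12\vee\frac1{p'}}$ and $D^*(N)\le N^{\frac12\vee\frac1p}$ (the paper invokes Hausdorff--Young; you use the embedding $L^2\hookrightarrow L^p$ for $p\le 2$ --- note your arrow is reversed --- and log-convexity for $p\ge 2$), then apply $\OT_N\le U_N\lesssim N^{|1/p-1/2|}$ and Theorem~\ref{Th3}. Your intermediate exponent $-1+\min(1/p,1-1/p)$ is a slip; it should be $|1/p-1/2|-1$, though your final conclusion is correct.

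The main difference lies in the lower bounds. The paper uses a single input from \cite[Remark~2]{Tem98trig}: $g_N\gtrsim N^{|1/p-1/2|}$. Since $\min\{K_N,\tLN,\LN\}\gtrsim g_N$, and the dual system is again $\mathcal T^d$ in $L^{p'}$ (or $C(\ST^d)$, $L^1(\ST^d)$ at the endpoints) with the same exponent $|1/p'-1/2|=|1/p-1/2|$, all five quantities are handled at once. Your route via the superdemocracy ratio $\mu_N$ and a separate appeal to Temlyakov's $\LN$ bound also works, but is more circuitous: in fact $\mu_N\approx N^{|1/p-1/2|}$ holds for \emph{all} $p$, so there is no need to split into $p>2$ and $p<2$ and pass to $\mu_N^*$; as written, your argument leaves $\tLN$ for $p<2$ (and $\tLN^*$ for $p>2$) not directly covered. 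Also, the lower bounds on $D$ and $D^*$ in your stage~(a) are never used.
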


\begin{proof}
From the Hausdorff-Young inequality and elementary inclusions, it is straightforward to prove that
	\Be
N^{\frac 12\wedge\frac1{p'}}\,\leq	    \| \bone_{\bfe A}\|_p \leq  N^{\frac12\vee \frac1{p'}},
	\Ee
for all $|A|=N$  and $\bfe\in\cY$.
Thus,  \[
D(N)\leq N^{\frac12\vee \frac1{p'}}\mand D^*(N)\leq N^{\frac12\vee \frac1{p}},
\]
and therefore
$$
	\OT_N(D,D^*) \leq U_N (D,D^*) = \sum_{j=1}^N \frac{j^{|\frac{1}{p} -\frac{1}{2}|}}j \leq c_p
	N^{|\frac{1}{p} -\frac{1}{2}|}\,,
	$$
with $c_p=1/|\frac1p-\frac12|$. This and Theorem \ref{Th3} provide upper bounds for the constants in \eqref{LN_Lp}. The lower bounds follow from $g_N\gtrsim N^{|\frac{1}{p} -\frac{1}{2}|}$; see \cite[Remark 2]{Tem98trig}.
\end{proof}
\BR
When $\SX=L^2$ one of course has $K_N=\tLN=\LN=1$. 
Observe, however, that $D(j)=D^*(j)=\sqrt j$ only gives $\OT_N\approx \ln(N+1)$. 
This loss is due to the fact that, in Theorem  \ref{Th3}, we only make use of the weak assumptions $\ell^{2,1}\hookrightarrow\SX\hookrightarrow\ell^{2,\infty}$, rather than the full force of $\SX=\ell^2$.
\ER

\subsection{Example 4: A summing basis by blocks.}\label{Example4} This is a slight modification of an example exhibited in \cite[Proposition 7.1]{GHO2013}. It again  illustrates that Theorem \ref{Th3} produces asymptotically optimal bounds, which cannot be obtained with the results in \cite{BBG2016}. Take any $\{\omega_j\}_{j=1}^\infty\in \SW_{\rm qc}$, say with $\om_1=1$. Define a space $\SX$ consisting of (real) sequences $x=(x_n)_{n=1}^\infty\in c_0$ such that
$$\Vert x\Vert = \max\Bigg\lbrace \Vert x\Vert_\infty, \ \underset{j\geq 1}{\sup}\,\underset{N\geq 1}{\sup}\, \frac{\omega_j}{j}\Big|  \sum_{{n\in\Delta_j}\atop{n\leq N}}x_n\Big|\Bigg\rbrace\,<\,\infty,$$
where $\Delta_j= \lbrace 2^j,...,2^j + 2j-1\rbrace$, $j=1,2,...$ By definition of the norm, the canonical system $\mathcal{B} = \lbrace \textbf{e}_n\rbrace_{n=1}^\infty$  is a monotone basis in $\SX$, with $\|\be_n\|=\|\be^*_n\|_*=1$ for all $n$.

\begin{proposition} In this example we have $\OT_N(D,D^*)\leq 2\om_N$, and therefore
\Be
K_N\leq 2\om_N,\quad  \tLN\leq1+4\om_N,\mand \LN \leq1+6\om_N,\qquad N=1,2,\ldots
\label{gNomN}\Ee
Moreover, all these quantities are bounded below by $\min\{g_N,g^c_N\}\geq\om_N$.
\end{proposition}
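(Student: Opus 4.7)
The plan has two separable pieces: establishing $\OT_N(D,D^*)\leq 2\omega_N$ (which by Theorem~\ref{Th3} is enough for the three upper bounds), and then exhibiting a test vector realising the lower bound $\min\{g_N,g_N^c\}\geq\omega_N$.

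For the first piece, I would begin by bounding the democracy functions directly. Fix $|A|=N$ and $\bfe\in\cY$. Since $\|\bone_{\bfe A}\|_\infty\leq 1\leq\omega_N$, the norm is controlled by the block term
\[
\frac{\omega_j}{j}\Big|\sum_{n\in A\cap\Delta_j,\,n\leq N_0}\e_n\Big|\,\leq\,\frac{\omega_j}{j}\min(N,2j).
\]
Splitting into $j\leq N/2$ (where $\min=2j$ gives $2\omega_j\leq 2\omega_N$) and $j>N/2$ (where $\min=N$ and quasi-concavity gives $\omega_j/j\leq 2\omega_N/N$, hence $\omega_j\,N/j\leq 2\omega_N$) yields $D(N)\leq 2\omega_N$. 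Since $\|\be_n^*\|_*\leq 1$ we trivially have $D^*(N)\leq N$. Now $D^*\in\Wqc$ by (the analogue of) Lemma~\ref{p0}, so Lemma~\ref{etaxi}(i) applied with $\eta_1=D^*$, $\eta_2=D$ and $\xi_2=2\omega$ gives
\[
T_N(D^*,D)\;\leq\;T_N(D^*,2\omega)\;=\;2\sum_{j=1}^N\frac{D^*(j)}{j}\,\Delta\omega_j\;\leq\;2\sum_{j=1}^N\Delta\omega_j\;=\;2\omega_N,
\]
using $D^*(j)/j\leq 1$ from the previous step. Hence $\OT_N(D,D^*)\leq T_N(D^*,D)\leq 2\omega_N$, and Theorem~\ref{Th3} delivers $K_N\leq 2\omega_N$, $\tLN\leq 1+4\omega_N$, $\LN\leq 1+6\omega_N$.

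For the lower bound I would test with $y=\sum_{n\in\Delta_N}(-1)^n\be_n$. The alternating signs keep the partial sums in $\Delta_N$ bounded by $1$, and $\omega_N/N\leq \omega_1/1=1$ by quasi-concavity, so $\|y\|=1$. Since $|\be_n^*(y)|\in\{0,1\}$, any $N$-element subset of $\Delta_N$ is a valid greedy set; choosing $\Lambda=\{n\in\Delta_N:(-1)^n=1\}$, an evenly-spaced half of $\Delta_N$, the partial sums of $G_Ny=\sum_{n\in\Lambda}\be_n$ restricted to $\Delta_N$ follow the pattern $1,1,2,2,\dots,N,N$, reaching maximum $N$, so $\|G_Ny\|=\max(1,\omega_N)=\omega_N$; by symmetry $\|y-G_Ny\|=\omega_N$ as well. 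Thus $g_N\geq\omega_N$ and $g_N^c\geq\omega_N$.

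To close the argument I would invoke the standard inequalities $K_N\geq g_N$ (every greedy operator $G_n$ is a projection $P_\Lambda$ with $|\Lambda|\leq N$), and $\tLN\geq g_N^c$, $\LN\geq\tLN$ from \cite[Prop.~1.1]{BBG2016}, so each of $K_N,\tLN,\LN$ is bounded below by $\min\{g_N,g_N^c\}\geq\omega_N$. The only real obstacle is the block-norm bookkeeping: the case-split producing $D(N)\leq 2\omega_N$ and the partial-sum counting producing exactly $\omega_N$ in the lower bound both require care with the quasi-concave identity $\omega_j/j\leq 2\omega_N/N$ when $j$ is close to $N$, but both reduce to elementary combinatorics once the cases are separated.
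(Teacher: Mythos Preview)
Your proof is correct and follows essentially the same strategy as the paper. The only cosmetic differences are: (i) you split the $D(N)$ estimate at $j\le N/2$ versus $j>N/2$, whereas the paper splits at $j\le N$ versus $j\ge N$ (using $|\Delta_j\cap A|\le|A|$ when $j\ge N$ and $|\Delta_j\cap A|\le|\Delta_j|=2j$ when $j\le N$); and (ii) you route the bound $\OT_N\le 2\omega_N$ through Lemma~\ref{etaxi}, while the paper argues directly as in Corollary~\ref{CorNuew}, namely $T_N(D^*,D)=\sum_{j=1}^N\frac{D^*(j)}{j}\Delta D(j)\le\sum_{j=1}^N\Delta D(j)=D(N)\le 2\omega_N$. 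Both routes are equivalent, and your test vector for the lower bound is exactly the paper's.
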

\Proof
For any $|A|=N$ and $\bfe\in\cY$  we claim that
\Be 1\leq \Vert \bone_{\bfe A}\Vert\leq \|\bone_A\|= \max \left\lbrace 1,
\sup_j\,\frac{\om_j}j\,|\Delta_j \cap A|\right\rbrace \leq 2\omega_{N}.\label{D_omj}\Ee
Indeed, the last inequality is justified using the quasi-concavity of $\om$ as follows:
\begin{itemize}
	\item if $j\geq N$, then $\frac{\om_j}j\,|\Delta_j \cap A| \leq \frac{\om_j}j\,|A|=\frac{\om_j}{j}\,N\leq\om_N$
	\item if $j\leq N$, then $\frac{\om_j}j\,|\Delta_j \cap A|\leq \frac{\om_j}j\,|\Dt_j|=2\om_j\,\leq\,2\om_N$.
\end{itemize}
On the other hand, we have the trivial estimate $\Vert \bone_{\bfe A}^*\Vert_* \leq \vert A\vert$.
Therefore,  arguing as in Corollary \ref{CorNuew} we obtain $\OT_N(D,D^*)\leq 2\om_N$, and therefore \eqref{gNomN}.
 We now show the lower bound.
Let $x=\sum_{j=0}^{2N-1}(-1)^j\be_{2^N+j}$, which has support in $\Dt_N$ and $\|x\|=1$.
Choosing $G_Nx=\sum_{\ell=0}^{N-1}\be_{2^N+2\ell}$, we see that
\[
g_N\geq \|G_Nx\|=\om_N\mand g_n^c\geq \|(I-G_N)x\|=\om_N.
\]
\ProofEnd

\subsection{Example 5: An example of Konyagin and Temlyakov} \label{Example5}

We slightly generalize a construction in \cite{KT} of a quasi-greedy superdemocratic basis which is not unconditional. For $1\leq p<\infty$ and $1\leq r\leq \infty$, let $KT(p,r)$ be the set of all sequences $\bx=\{x_n\}_{n=1}^\infty\in c_0$ with norm
$$\vertiii{\bx}=\max\big\lbrace \|\bx\|_{\lpr},\;\|\bx\|_{\bp}\big\rbrace\,<\,\infty$$
where
\[
\|\bx\|_{\lpr}=\left(\sum_{j=1}^\infty (j^{1/p}x_j^*)^r \frac{1}{j}\right)^{1/r},\mand \|\bx\|_{\bp}=\sup_{N\geq1}\Big|\sum_{n=1}^N\frac{x_n}{n^{1/p'}}\Big|.
\]
The example in \cite[$\S3.3$]{KT} is the case $KT(2,2)$, while  $KT(p,p), 1<p<\infty, $ was later considered in \cite{GHO2013}.
A trivial case corresponds to $r=1$, for which $K(p,1)=\ell^{p,1}$.

We summarize the main results in the next theorem, where we write $\cB=\{\be_n\}_{n=1}^\infty$ for the standard canonical basis. 

\begin{theorem}\label{th_KT}
Let $1\leq r\leq \infty$. 
\sline (i) If $1<p<\infty$ then $\big(KT(p,r),\cB\big)$ is quasi-greedy, bidemocratic and
\Be
\LN\approx\LN^*\approx K_N\approx [\ln (N+1)]^{1/r'}\mand \tLN\approx\tLN^*\approx 1.
\label{LNKTp}\Ee
\noindent (ii) If $p=1$ then $\big(KT(1,r),\cB\big)$ is superdemocratic and
\Be
\OT_N(D,D^*)\approx \LN\approx \tLN\approx\LN^*\approx \tLN^*\approx K_N\approx g_N\approx\mu_N^*\approx [\ln (N+1)]^{1/r'}.
\label{LNKTpii}\Ee
\end{theorem}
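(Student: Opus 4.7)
My plan is to compute the four democracy sequences, apply Theorem \ref{Th3}, and then sharpen the outcome by a direct estimate of $K_N$ when needed. For $|A|=N$ and $\bfe\in\cY$, the Lorentz part is rearrangement-invariant, $\|\bone_{\bfe A}\|_{\lpr}\approx N^{1/p}$, while the summing part satisfies $\|\bone_{\bfe A}\|_{\bp}\le\sum_{n=1}^{N}n^{-1/p'}$, which is $\approx N^{1/p}$ for $p>1$ and $\le N$ for $p=1$. Hence $d(N)\approx D(N)\approx N^{1/p}$ for $p\geq 1$, so $\cB$ is democratic. For $D^*(N)$, the Lorentz H\"older inequality applied to the duality pairing gives $\|\bone^*_{\bfe A}\|_*\le\|\bone_A\|_{\ell^{p',r'}}\approx N^{1/p'}$ if $p>1$, and $\approx[\ln(N+1)]^{1/r'}$ if $p=1$. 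Matching lower bounds follow from Lemma \ref{p0bis} (in the case $p>1$) and, for $p=1$, from pairing $\bone^*_{\bfe A}$ with $A=\{1,\ldots,N\}$ and $\e_n=(-1)^n$ against the test vector $\bx=\sum_{n=1}^{N}(-1)^n n^{-1}\be_n$, whose $\bp$-norm is $\lesssim 1$ and whose $\lpr$-norm is $\approx(\ln N)^{1/r}$.

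With these sequences in hand I apply Theorem \ref{Th3}. For $p=1$, Abel summation gives $T_N(D,D^*)\approx D^*(N)\approx[\ln(N+1)]^{1/r'}$, so $\OT_N(D,D^*)$ has the same order and Theorem \ref{Th3} bounds every Lebesgue-type constant in the statement by $[\ln(N+1)]^{1/r'}$. Matching lower bounds come from the chain $\LN\ge\tLN\ge g^c_N$ and $\LN^*\ge\tLN^*\gtrsim\mu^*_N$ of \cite[Prop.~1.1]{BBG2016}, together with $d^*(N)\approx 1$ (verified by testing $\bone^*_{\{1,\ldots,N\}}$ with $\bfe\equiv 1$ against $\be_1$, which gives $\mu^*_N\approx D^*(N)$), and the explicit vector $\bx=\sum_{n=1}^{N}(\be_{2n-1}-\be_{2n})/n$ whose greedy projection onto odd subindices realizes $\|G_N\bx\|/\vertiii{\bx}\approx[\ln(N+1)]^{1/r'}$, yielding $g_N\gtrsim[\ln(N+1)]^{1/r'}$. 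This completes part (ii).

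For $p>1$, however, the same computation yields only $\OT_N(D,D^*)\approx\ln(N+1)$, weaker than the claimed $[\ln(N+1)]^{1/r'}$, so Theorem \ref{Th3} must be sharpened by a direct estimate of $K_N$. Since $\lpr$ is rearrangement-invariant, $\|P_A\bx\|_{\lpr}\le\|\bx\|_{\lpr}$; for the $\bp$-part, Lorentz H\"older applied to the weighted truncation $n^{-1/p'}\bone_{A\cap[1,M]}$ gives
\[
\|P_A\bx\|_{\bp}\,\le\,\|\bx\|_{\lpr}\,\sup_M\|n^{-1/p'}\bone_{A\cap[1,M]}\|_{\ell^{p',r'}},
\]
and the non-increasing rearrangement of this weight is dominated by $j^{-1/p'}$ for $j\le|A|$, producing an $\ell^{p',r'}$-norm of order $(\sum_{j=1}^{|A|}1/j)^{1/r'}\approx[\ln(N+1)]^{1/r'}$; thus $K_N\lesssim[\ln(N+1)]^{1/r'}$. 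Quasi-greediness is proved by repeating the estimate with $P_A$ replaced by $G_N$ and using the greedy property to split coefficients into level-sets above and below that of the $N$-th largest, bounding each contribution uniformly as in \cite{KT,GHO2013}. With $\cB$ quasi-greedy and democratic, $\tLN\approx 1$ by the standard almost-greedy characterization, and bidemocracy follows from the democracy computations in paragraph one. The matching lower bounds $K_N,\LN,\LN^*\gtrsim[\ln(N+1)]^{1/r'}$ come from a two-block test vector adapted from \cite{KT}. The main obstacle is extending the quasi-greediness argument of \cite{KT} from the symmetric case $r=p=2$ to arbitrary $r$: the coefficient splitting must be rebalanced against the asymmetric Lorentz scale to keep the residual uniformly bounded in $N$.
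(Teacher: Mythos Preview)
Your overall strategy matches the paper's: compute $D$, $D^*$, $d$, $d^*$, feed them into Theorem~\ref{Th3}, and for $1<p<\infty$ sharpen the $K_N$ upper bound by a direct H\"older estimate (the paper does this via \eqref{first} and the embedding $KT(p,r)\hookrightarrow\ell^{p,r}$; your Lorentz--H\"older argument on the $\bp$-part is a valid variant). The quasi-greediness sketch and the appeal to bidemocracy are also in line with Lemmas~\ref{qg1}--\ref{qg2} and \cite{DKKT,GHO2013}.

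There is, however, a genuine gap in part (ii). Your test vector $\bx=\sum_{n=1}^{N}(\be_{2n-1}-\be_{2n})/n$ does \emph{not} yield a lower bound for $g_N$: the set of odd indices $\{1,3,\dots,2N-1\}$ is not a greedy set for $\bx$, because $|x_2|=1>|x_3|=\tfrac12$, and more generally $|x_{2k}|=\tfrac1k>|x_{2k+1}|=\tfrac1{k+1}$. The only greedy sets of size $N$ for this $\bx$ are (up to a single tie-breaking choice) initial segments $\{1,\dots,N\}$, and for those $\|G_N\bx\|_{b_1}\le 1$. So your vector gives $K_N\gtrsim[\ln(N+1)]^{1/r'}$ (via the non-greedy projection onto odd indices), hence $\LN\gtrsim[\ln(N+1)]^{1/r'}$, but it says nothing about $g_N$, $g^c_N$, or $\tLN$. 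Since $\mu_N\approx 1$ here, none of the standard chains $\tLN\gtrsim g_N,g^c_N,\mu_N$ closes the argument either. The paper handles this with a different, blockwise construction (Lemma~\ref{qg3}): a vector supported on nested dyadic blocks of sizes $2^n,\dots,2^{2n}$ with carefully chosen signs, designed so that a genuine greedy truncation removes all the cancellation in the $b_1$-norm. That construction is what you are missing for $g_N$ and $\tLN$ in the case $p=1$.

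A smaller point: your justification of $d^*(N)\approx 1$ for $p=1$ is stated backwards. Pairing $\bone^*_{\{1,\dots,N\}}$ against $\be_1$ gives only $\|\bone^*_{\{1,\dots,N\}}\|_*\ge 1$. The needed \emph{upper} bound $\|\bone^*_{\{1,\dots,N\}}\|_*\le 1$ comes from the structure of the $b_1$-norm itself: $|\sum_{n=1}^N x_n|\le\|\bx\|_{b_1}\le\vertiii{\bx}$.
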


We split the proof in various lemmas, starting with the computation of $D$ and $D^*$.

\begin{lemma}\label{DN_KTpr}
If $1\leq r\leq \infty$, the following holds for the space $KT(p,r)$:
\sline (i) If $1<p<\infty$, then $\; d(N)\,\approx\,D(N)\approx N^{1/p}$, and $\;d^*(N)\approx D^*(N) \approx N^{1/p'}$.
\sline (ii) If $p=1$, then $\;d(N)\,\approx\,D(N)\approx N$, $\; d^*(N)=1$ and $D^*(N)\approx [\ln (N+1)]^{1/r'}$.

\sline In particular, $\big(KT(p,r),\cB\big)$ is always superdemocratic, and is bidemocratic if $p>1$.
\end{lemma}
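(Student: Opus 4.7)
The plan is to separate the two ingredients of the norm $\vertiii{\cdot}$ and compute the Lorentz contribution $\|\bone_{\bfe A}\|_{\ell^{p,r}}$ and the summing contribution $\|\bone_{\bfe A}\|_{b_p}$ one at a time. Since the Lorentz norm of $\bone_{\bfe A}$ depends only on $|A|=N$, a direct calculation gives
\[
\|\bone_{\bfe A}\|_{\ell^{p,r}} \,=\, \Bigl(\sum_{j=1}^N j^{r/p-1}\Bigr)^{1/r}\,\approx\, N^{1/p},
\]
uniformly in $A$ and $\bfe$. For the summing part I would use
\[
\|\bone_{\bfe A}\|_{b_p}\,\leq\,\sum_{n\in A}\frac{1}{n^{1/p'}}\,\leq\,\sum_{n=1}^N\frac{1}{n^{1/p'}},
\]
which is $\lesssim N^{1/p}$ when $p>1$ and equals $N$ when $p=1$. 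This yields $D(N)\lesssim N^{1/p}$ (resp.\ $\lesssim N$), with sharpness obtained by testing $\bone_A$ for $A=\{1,\ldots,N\}$ and $\bfe\equiv 1$, which saturates both norms. Because the Lorentz norm gives a uniform lower bound of order $N^{1/p}$, the same computation simultaneously delivers $d(N)\approx N^{1/p}$ (resp.\ $\approx N$).

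For the dual democracy, I would start from
\[
\|\bone^*_{\bfe A}\|_*\,=\,\sup_{\vertiii{\bx}\leq 1}\Bigl|\sum_{n\in A}\e_n x_n\Bigr|\,\leq\,\sup_{\vertiii{\bx}\leq 1}\sum_{j=1}^N x_j^*,
\]
and apply Hölder's inequality with the measure $dj/j$ against the $\ell^{p,r}$-norm of $\bx$. In the case $p>1$ this gives
\[
\sum_{j=1}^N x_j^*\,\leq\,\|\bx\|_{\ell^{p,r}}\,\Bigl(\sum_{j=1}^N j^{r'/p'-1}\Bigr)^{1/r'}\,\lesssim\, N^{1/p'}\,\vertiii{\bx},
\]
while for $p=1$ the same Hölder step produces the logarithmic factor $(\ln N)^{1/r'}$. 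For the matching lower bound, when $p>1$ one tests against $\bx=\bone_A$ with $A=\{1,\ldots,N\}$, producing $\bone^*_{\bfe A}(\bx)=N$ and $\vertiii{\bx}\approx N^{1/p}$, hence $D^*(N)\gtrsim N^{1/p'}$. When $p=1$ the critical step is to manufacture a test vector of bounded norm for which $\bone^*_{\bfe A}(\bx)\gtrsim(\ln N)^{1/r'}$; my choice would be
\[
\bx\,=\,\sum_{n=1}^N\frac{(-1)^n}{n\,(\ln N)^{1/r}}\,\be_n,\qquad A=\{1,\ldots,N\},\qquad \e_n=(-1)^n.
\]
The alternating structure keeps $\|\bx\|_{b_1}$ bounded by the usual partial-sum estimate, a direct calculation gives $\|\bx\|_{\ell^{1,r}}^r\approx (1/\ln N)\sum_{j=1}^N 1/j\approx 1$, and $\bone^*_{\bfe A}(\bx)=\sum_{n=1}^N 1/(n(\ln N)^{1/r})\approx (\ln N)^{1/r'}$.

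The computation of $d^*(N)$ splits analogously. For $p>1$ I would invoke Lemma~\ref{p0bis} to get $d^*(N)\geq N/D(N)\approx N^{1/p'}$, which matches the upper bound $d^*(N)\leq D^*(N)\approx N^{1/p'}$. For $p=1$ the lower bound $d^*(N)\gtrsim 1$ is trivial (test any $\be_n$), while the upper bound $d^*(N)\lesssim 1$ comes from choosing $A_N$ to be a consecutive block $\{k,k+1,\ldots,k+N-1\}$ with $\bfe\equiv 1$: then Abel summation collapses $\bone^*_{A_N}(\bx)$ into the telescoping difference $S_{k+N-1}-S_{k-1}$, controlled by $2\|\bx\|_{b_1}\leq 2\vertiii{\bx}$. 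Finally, the superdemocracy and bidemocracy assertions are immediate consequences of $d\approx D$, and of $D(N)D^*(N)\lesssim N$ (which only holds when $p>1$).

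The main obstacle is the lower bound for $D^*(N)$ when $p=1$: one has to construct a vector of unit norm which extracts the logarithmic exponent $1/r'$ from the interaction of the Lorentz and summing parts simultaneously, and the precise normalization $1/(n(\ln N)^{1/r})$ in the alternating construction is what balances both $\|\cdot\|_{\ell^{1,r}}$ and $\|\cdot\|_{b_1}$ to the level required.
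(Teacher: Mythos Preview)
Your approach is essentially identical to the paper's: both compute the Lorentz and summing contributions to $\vertiii{\bone_{\bfe A}}$ separately, use the H\"older inequality with the measure $dj/j$ against $\|\bx\|_{\ell^{p,r}}$ to bound $\sum_{j=1}^N x_j^*$, and test the same alternating vector $\sum_{n=1}^N(-1)^n n^{-1}\be_n$ (unnormalized, in the paper's version) for the lower bound on $D^*(N)$ when $p=1$.

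There is one small discrepancy. The lemma asserts the exact value $d^*(N)=1$ when $p=1$, not merely $d^*(N)\approx 1$. Your choice of a general block $A_N=\{k,\dots,k+N-1\}$ yields $|\bone^*_{A_N}(\bx)|=|S_{k+N-1}-S_{k-1}|\leq 2\|\bx\|_{b_1}$, hence only $d^*(N)\leq 2$. The paper instead takes $A=\{1,\dots,N\}$, so that $\bone^*_A(\bx)=S_N$ is literally one of the partial sums appearing in the definition of $\|\bx\|_{b_1}$, giving $\|\bone^*_A\|_*\leq 1$ directly; together with your lower bound $d^*(N)\geq 1$ (testing $\be_n$, or equivalently the paper's duality inequality $\vertiii{\bone^*_{\bfe A}}_*\geq N/\vertiii{\bone_{\bar\bfe A}}\geq 1$), this yields the exact equality. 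With this trivial adjustment your proof is complete and matches the paper's.
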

\Proof
If $|A|=N$ and $\bfe\in\cY$, then   \Be
\tri{\bone_{\bfe A}}\leq \tri{\bone_A}\leq
\max\Big\{[\sum_{j=1}^{N}(j^\frac1p)^r\tfrac1j]^\frac1r, \sum_{j=1}^{N}\tfrac1{j^{1/p'}}\Big\}=\sum_{j=1}^{N}\tfrac1{j^{1/p'}}\leq pN^{1/p},
\label{KTD_aux1}\Ee
and \[
\tri{\bone_{\bfe A}}\geq \|\bone_{\bfe A}\|_\lpr=[\sum_{j=1}^{N}(j^\frac1p)^r\tfrac1j]^\frac1r\geq\,c_{p,r}\, {N}^{1/p},
\]
for some 
 $c_{p,r}>0$. This shows that $d(N)\approx D(N)\approx N^{1/p}$ for all $1\leq p<\infty$.
 For the assertion about the dual system, observe that if $\tri{\bx}=1$, then
\Bea
|\bone^*_{\bfe A}(\bx)| & \leq &  \sum_{n\in A}|x_n|\;\leq\; \sum_{j=1}^N x^*_j\nonumber\\
& \leq & \|\bx\|_\lpr\,\Big[\sum_{j=1}^Nj^\frac{r'}{p'}\tfrac1j\Big]^\frac1{r'}\leq \left\{\Ba{ll} N^{1/p'} & \mbox{{\small if $1<p<\infty$}} \cr
[\ln(N+1)]^{\frac1{r'}} & \mbox{{\small if $p=1$}} \Ea\right.
\label{Ds_KT1r}\Eea
So taking sup over $\tri{\bx}=1$ we obtain the asserted upper bounds for $D^*(N)$.
For the lower bound, using \eqref{KTD_aux1}, \Be
\tri{\bone^*_{\bfe A}}_*\geq \bone^*_{\bfe A}(\bone_{\bar\bfe A})/\tri{\bone_{\bar{\bfe} A}}\geq N/(pN^{\frac1{p}})= N^{\frac1{p'}}/p.
\label{KTD_aux2}\Ee
So, when $1<p<\infty$ we have already proved $d^*(N)\approx D^*(N)\approx N^{1/p'}$. When $p=1$, one can obtain $d^*(N)=1$ from
\eqref{KTD_aux2} and
\[
\tri{\bone^*_{\{1,\ldots,N\}}}_*=\sup_{\tri{\bx}=1}\big|\sum_{n=1}^Nx_n\big|\leq1.
\]
Finally, setting $\e_n=(-1)^n$ and $\bx=\sum_{n=1}^N\frac{(-1)^n}n\be_n$, we have
$\tri{\bx}\approx [\ln (N+1)]^{1/r}$ and therefore
\[
\tri{\bone^*_{\bfe \{1,\ldots,N\}}}_*\geq \Ts \big|\sum_{n=1}^N\frac1n\big|/\tri{\bx} \,\approx\,[\ln (N+1)]^{1/r'}.
\]
This and \eqref{Ds_KT1r} show that $D^*(N)\approx [\ln (N+1)]^{1/r'}$, and establish the lemma.
\ProofEnd

The following proof is a variation of \cite[$\S3.4$]{KT}.

\begin{lemma}\label{qg1}
Let $1<p<\infty$ and $1\leq r\leq \infty$. Then $\cB$ is quasi greedy in $KT(p,r)$.
\end{lemma}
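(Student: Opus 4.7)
The aim is to show $\sup_N\|G_N\|<\infty$, and for this I will split $\vertiii{\cdot}=\max\{\|\cdot\|_{\ell^{p,r}},\|\cdot\|_{b_p}\}$ and control each summand separately. Fix $x\in KT(p,r)$ with $\vertiii{x}\le 1$, a greedy set $A\in\mathcal G(x,N)$, and set $\alpha=\min_{n\in A}|x_n|$. The $\ell^{p,r}$ component is handled immediately: since $\ell^{p,r}$ is rearrangement invariant and zeroing coordinates only decreases the decreasing rearrangement, we have $\|G_Nx\|_{\ell^{p,r}}\le\|x\|_{\ell^{p,r}}\le 1$ with no further work.

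All the work is therefore in estimating $\|G_Nx\|_{b_p}$. For a fixed $M\ge 1$, I will write
$$\sum_{n\in A,\,n\le M}\frac{x_n}{n^{1/p'}}\;=\;\sum_{n\le M}\frac{x_n}{n^{1/p'}}\;-\;\sum_{n\le M,\,n\notin A}\frac{x_n}{n^{1/p'}},$$
so that the first summand is controlled by $\|x\|_{b_p}\le 1$. The task thus reduces to bounding the $b_p$-norm of $y=x\chi_{A^c}$, which satisfies $|y_n|\le\alpha$ pointwise and $\|y\|_{\ell^{p,r}}\le 1$. For this I will use the layer-cake identity $y=\int_0^\alpha\bone^{\sign x}_{E_t\setminus A}\,dt$ with $E_t=\{n:|x_n|>t\}$, together with the democracy bound $\vertiii{\bone^{\sign x}_B}\lesssim D(|B|)\approx|B|^{1/p}$ from Lemma \ref{DN_KTpr} and the triangle inequality, to arrive at
$$\|y\|_{b_p}\;\lesssim\;\int_0^\alpha|E_t\setminus A|^{1/p}\,dt\;\approx\;\sum_{j\ge 1}\frac{x^*_{N+j}}{j^{1/p'}}.$$

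The hard part is to close this last estimate: the right-hand side is an $\ell^{p,1}$ tail that is generically infinite for $x\in KT(p,r)$ when $r>1$, so the naive triangle inequality through $D$ alone produces the logarithmic overshoot $[\ln(N+1)]^{1/r'}$ that governs $\LN$ and $K_N$ but is too lossy for quasi-greediness. The refinement I will carry out sharpens the signed-indicator estimate by rewriting the truncated partial sums $\sum_{n\le M,\,n\in E_t\setminus A}\sign(x_n)/n^{1/p'}$ as differences of partial sums of $x$ evaluated on level-set truncations $T_tx-T_\alpha x$, thereby transferring the constraint $\|x\|_{b_p}\le 1$ directly into the bound. The bidemocracy $D(N)D^*(N)\approx N$ from Lemma \ref{DN_KTpr} is the structural identity that absorbs the $1/t$ arising when converting indicator $b_p$-norms to $x$-truncated $b_p$-norms, leaving an integral of bounded partial sums that is uniformly $O(1)$. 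Assembling the three steps produces $\vertiii{G_Nx}\le C$ with $C$ independent of $x$ and $N$, which is the asserted quasi-greediness.
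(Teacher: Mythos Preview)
Your reduction is correct up to the point where you obtain the $\ell^{p,1}$-type tail
\[
\|y\|_{b_p}\;\lesssim\;\int_0^\alpha|E_t\setminus A|^{1/p}\,dt\;\approx\;\sum_{j\ge 1}\frac{x^*_{N+j}}{j^{1/p'}},
\]
and you correctly diagnose that this is too weak for $r>1$. The gap is in the proposed fix. Writing the indicator partial sums in terms of the truncations $T_tx-T_\alpha x$ does not help: the layer-cake integrand carries $\sign(x_n)$, not $x_n$, so passing from one to the other on $E_t\setminus A$ requires dividing by $|x_n|\in(t,\alpha]$, which does not produce a clean factor $1/t$. Even if you force a $1/t$, you are left with $\|T_tx-T_\alpha x\|_{b_p}$, and $T_\alpha x$ is precisely $G_Nx$ (up to ties), so bounding it is circular. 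Bidemocracy $D(N)D^*(N)\approx N$ is a statement about indicator norms in $\SX$ and $\SX^*$; it does not convert signed-indicator $b_p$-sums into $x$-weighted ones, and no concrete inequality of that type is written down.

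The paper's argument avoids the layer-cake entirely. It introduces the threshold $M_\alpha=(\vertiii{x}/\alpha)^p$ and treats the two regimes separately. For $M\le M_\alpha$ the crude bound $\bigl|\sum_{j\le M,\,j\notin\Lambda}x_j/j^{1/p'}\bigr|\le\alpha\sum_{j\le M_\alpha}j^{-1/p'}\lesssim\alpha M_\alpha^{1/p}=\vertiii{x}$ already closes. For $M>M_\alpha$ only the tail $\sum_{j\in\Lambda,\,M_\alpha<j\le M}x_j/j^{1/p'}$ remains, and this is controlled by a H\"older inequality against $\|x\|_{\ell^{p,r}}$ with auxiliary exponents $q\in(\max\{1,p/r\},p)$ and $s=rq/p$, after inserting the factor $\alpha^{1-p/q}(x_j^*)^{p/q-1}\ge 1$. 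The threshold $M_\alpha$ is exactly what makes the remaining weight sum $\sum_{j>M_\alpha}j^{-(1/p'-1/q')s'-1}$ converge to the right power of $M_\alpha$. Your scheme has no analogue of this splitting, and that is the missing idea.
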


\begin{proof}
Since the canonical basis is unconditional in $\ell^{p,r}$ and $KT(p,1)=\ell^{p,1}$ we may assume that $r>1$.
Also, it suffices to show that $\|G_N\bx\|_{\bp}\leq C\,\tri{\bx}$, for all $G_N\in\mathcal{G}_N$ and all $N$.
Let $\bx\in {KT(p,r)}$, $\Lambda\in\mathcal{G}(\bx,N)$, $\alpha = \min_{j\in\Lambda}x_j^*$ and $M_\alpha = \left(\frac{\tri{\bx}}{\alpha}\right)^p\ge 1$.

 Then, for $M\leq M_\al$,  using that $\vert x_j\vert\leq \alpha$ if $j\in\Lambda^c$, we obtain
	\begin{eqnarray}\label{c1}
	\Bigg| \sum_{\underset{j\in\Lambda}{j=1}}^M \frac{x_j}{j^{1/{p'}}}\Bigg| &\leq& \Bigg\vert \sum_{j=1}^{M} \frac{x_j}{j^{1/{p'}}}\Bigg| + \Bigg\vert \sum_{\underset{j\in\Lambda^c}{j=1}}^{M} \frac{x_j}{j^{1/{p'}}}\Bigg| \leq \Vert \bx \Vert_\bp + \alpha\sum_{j=1}^{M_\alpha} \frac{1}{j^{1/{p'}}}\nonumber \\
	&\lesssim& \vertiii{\bx} + \alpha M_\alpha^{1/p} \lesssim \vertiii{\bx}.
	\end{eqnarray}	
For $M > M_\al$, we use \eqref{c1} to obtain
\Be\label{c2}
	\Bigg| \sum_{\underset{j\in\Lambda}{j=1}}^M \frac{x_j}{j^{1/{p'}}}\Bigg| \leq \Bigg| \sum_{\underset{j\in\Lambda}{j=1}}^{M_\alpha} \frac{x_j}{j^{1/{p'}}}\Bigg| + \Bigg| \sum_{\underset{j\in\Lambda}{M_\alpha < j\leq M}} \frac{x_j}{j^{1/{p'}}}\Bigg| \lesssim \vertiii{\bx} + \underbrace{\Bigg|\sum_{\underset{j\in\Lambda}{M_\alpha < j\leq M}} \frac{x_j}{j^{1/{p'}}}\Bigg|}_{(I)}.
	\Ee
To estimate $(I)$, take a number $q$ such that $\max\lbrace 1, p/r\rbrace<q<p$.
Set $s=rq/p>1$ (if $r = \infty$, then $s = \infty$ as well). By the Hardy-Littlewood rearragement inequality,
	\begin{eqnarray*}
		(I)&\leq& \sum_{j=1}^N \frac{x_j^*}{(j+M_\alpha)^{1/{p'}}}\leq  \alpha^{1-p/q} \sum_{j=1}^N\frac{(x_j^*)^{p/q}j^{1/q}j^{1/q'}}{(j+M_\alpha)^{1/{p'}}}\frac1j \\
		&\leq& \alpha^{1-p/q}\left(\sum_{j=1}^\infty (j^{1/p}x_j^*)^{sp/q}\frac{1}{j} \right)^{1/s}\left(\sum_{j=1}^\infty \left(\frac{j^{1/q'}}{(j+M_\alpha)^{1/p'}}\right)^{s'}\frac1j\right)^{1/s'}\\
		&\leq& \alpha^{1-p/q} \vertiii{\bx}^{p/q}\underbrace{\left(\sum_{j=1}^{\infty} \frac{j^{s'/{q'}}}{(j+M_\alpha)^{s'/p'}}\frac1j\right)^{1/s'}}_{(II)}.
	\end{eqnarray*}	
Finally, we estimate $(II)$ as follows:
\begin{eqnarray} \label{c3}
    (II) &\leq & M_\alpha^{-1/p'}\left( \sum_{j\leq M_\alpha} \frac{j^{s'/q'}}{j}\right)^{1/s'} + \left( \sum_{j>M_\alpha} \frac{1}{j^{(\frac1{p'}-\frac1{q'})s'}}\frac{1}{j}\right)^{1/s'} \nonumber\\
    &\lesssim & M_\alpha^{1/q' - 1/p'} \leq \big({\vertiii{\bx}}/{\alpha}\big)^{p(1/p - 1/q)}.
\end{eqnarray}
	Hence, using \eqref{c3} in the estimate of $(I)$,
	\begin{eqnarray}\label{I}
	(I)\lesssim \alpha^{1-p/q}\vertiii{\bx}^{p/q}\vertiii{\bx}^{1-p/q}/\alpha^{1-p/q} = \vertiii{\bx}.
	\end{eqnarray}
Thus \eqref{I}, \eqref{c2}, and \eqref{c1} show that $\|G_N\bx\|_{\bp}\lesssim\,\tri{\bx}$, establishing the result.
\end{proof}

\begin{lemma}\label{qg2}
	For $1\leq p<\infty$ and $1\leq r\leq \infty$, we have $K_N\gtrsim (\ln (N+1))^{1/r'}$.
	In particular, $\cB$ is not unconditional in $KT(p,r)$ if $r>1$.
\end{lemma}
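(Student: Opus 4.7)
The plan is to exhibit a single vector $\bx \in KT(p,r)$ together with a set $A \subset \{1, \ldots, N\}$ for which the ratio $\tri{P_A\bx}/\tri{\bx}$ is at least of order $(\ln(N+1))^{1/r'}$. Since $r=1$ gives $(\ln(N+1))^{0}=1$, which is trivial (and $KT(p,1)=\ell^{p,1}$ is unconditional anyway), I may assume $r>1$.

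I would test with
\[
\bx\;=\;\sum_{n=1}^{N}\frac{(-1)^n}{n^{1/p}}\,\be_n\qquad\mbox{and}\qquad A=\{\,n\leq N\mid n\;\mbox{odd}\,\}.
\]
The key point of this choice is the identity $x_n/n^{1/p'}=(-1)^n/n$, which makes $\bx$ very small in the $\bp$-seminorm but forces $P_A\bx$ to be large there, while the $\lpr$-norm changes only by a bounded factor.

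The main steps are then the following. First, $\|\bx\|_\bp=\sup_M\big|\sum_{n=1}^M(-1)^n/n\big|=O(1)$ by the standard alternating-series bound. Secondly, since $|x_n|=n^{-1/p}$ is already decreasing, $x_j^*=j^{-1/p}$ for $j\leq N$, so $(j^{1/p}x_j^*)^r/j=1/j$, whence $\|\bx\|_\lpr\approx(\ln(N+1))^{1/r}$ when $r<\infty$ and $\|\bx\|_\lpr=1$ when $r=\infty$. Combining, $\tri{\bx}\lesssim(\ln(N+1))^{1/r}$ with the convention $(\ln(N+1))^{0}=1$. Thirdly, for the projection, all surviving signs are $-1$, so
\[
\|P_A\bx\|_\bp\;\geq\;\sum_{\substack{n\leq N\\ n\;\mathrm{odd}}}\frac{1}{n}\;\approx\;\tfrac12\ln(N+1),
\]
hence $\tri{P_A\bx}\gtrsim\ln(N+1)$. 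Dividing yields $K_N\gtrsim \ln(N+1)/(\ln(N+1))^{1/r}=(\ln(N+1))^{1/r'}$, and the ``in particular'' clause follows because $K_N$ unbounded forces $\cB$ to be non-unconditional.

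The only mildly delicate point is making sure the two norm computations are tight in the right direction: one needs a lower bound on $\|P_A\bx\|_\bp$ (easy, by testing at $M=N$) and an upper bound on $\tri{\bx}$, the latter requiring separate handling of $r=\infty$ (where $\|\bx\|_\lpr=1$) versus $r<\infty$ (where harmonic-sum asymptotics apply). Apart from that, every estimate is a direct computation with the explicit formulas for the two seminorms defining $\tri{\cdot}$.
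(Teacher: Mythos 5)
Your proof is essentially the same as the paper's. The paper tests with $\bx=\sum_{n=1}^{2N}\frac{(-1)^n}{n^{1/p}}\be_n$ and projects onto the even indices $A=\{2,4,\dots,2N\}$ (so $|A|=N$); you use the same vector over $\{1,\dots,N\}$ and project onto the odd indices ($|A|\leq N$). The computation — $\|\bx\|_{\bp}=O(1)$ by alternating cancellation, $\|\bx\|_{\lpr}\approx(\ln(N+1))^{1/r}$ since the rearrangement is exactly $j^{-1/p}$, and $\|P_A\bx\|_{\bp}\approx\ln(N+1)$ because the projection kills the cancellation — is the same, so the argument is correct and matches the paper's.
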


\begin{proof}
Consider $\bx=\sum_{n=1}^{2N} \frac{(-1)^n}{n^{1/p}}\be_n$, with $N\geq 1$. Then,
\[ \Ts\tri{\bx}=\left(\sum_{n=1}^{2N}\frac{1}{n}\right)^{1/r}\, \approx \,[\ln(N+1)]^{1/r}.\]
 On the other hand, for the set $A=\lbrace 1,2,...,2N\rbrace\cap 2\mathbb{Z}$, with cardinality $N$, then,
\[\Ts\tri{P_A(\bx)}\geq \, \|P_A(\bx)\|_\bp=\sum_{n=1}^N \frac{1}{2n}\, \approx\, \ln(N+1).\]
Thus,
$
K_N\geq {\vertiii{
P_A(\bx)}}/{\vertiii{\bx}} \gtrsim \,[\ln(N+1)]^{1/r'}\,
$.
\end{proof}

\begin{lemma}\label{qg3}
	For all $1\leq r\leq \infty$, the space $KT(1,r)$ satisfies $g_N\gtrsim (\ln (N+1))^{1/r'}$.
	In particular, $\cB$ is not quasi-greedy in $KT(1,r)$ if $r>1$.
\end{lemma}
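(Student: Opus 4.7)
For $r=1$ the statement reduces to $g_N\geq 1$ and is trivial. For $r>1$ the plan is to exhibit, for each large $N$, a vector $\bx\in KT(1,r)$ and a greedy operator $G_N\in\mathcal{G}_N$ with $\vertiii{G_N\bx}/\vertiii{\bx}\gtrsim(\ln N)^{1/r'}$. The guiding observation is that for any $y=\sum_{k=1}^N(1/k)\be_{p_k}$ with $p_1<\cdots<p_N$, one has $\|y\|_{\bp}=H_N\approx\ln N$ while $\|y\|_{\ell^{1,r}}=H_N^{1/r}\approx(\ln N)^{1/r}$, so $y$ itself displays a ratio $(\ln N)^{1/r'}$ between the two norms defining $KT(1,r)$. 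The task is to realize such a $y$ as $G_N\bx$ for some $\bx$ whose $KT(1,r)$-norm is still of order $(\ln N)^{1/r}$.

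Construction: pick integers $J_k$ ($k=0,1,\ldots,N$) with $J_0=0$ and $H_{J_k}=\sum_{j=1}^{J_k}1/j\approx(N+1)H_k$ (so $J_k\approx e^{(N+1)H_k}$). Set $p_k=k+J_{k-1}$ and let
\[
 \bx\;=\;\sum_{k=1}^N \frac{1}{k}\,\be_{p_k}\;-\;\sum_{j=1}^{J_N}\frac{1}{(N+1)j}\,\be_{q_j},
\]
where the cancellation positions $q_j$ for $j\in(J_{k-1},J_k]$ occupy the $J_k-J_{k-1}$ slots of the interval $(p_k,p_{k+1})$ in natural increasing order. Every cancellation coefficient has magnitude at most $1/(N+1)<1/N$, while the big ones range over $\{1,1/2,\ldots,1/N\}$; hence $A=\{p_1,\ldots,p_N\}$ is the unique greedy set of order $N$, and the corresponding $G_N$ satisfies $G_N\bx=\sum_{k=1}^N(1/k)\be_{p_k}$.

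Verification: (a) At $p_k$ the partial sum of $\bx$ equals $H_k-H_{J_{k-1}}/(N+1)\approx H_k-H_{k-1}=1/k$ and then decreases monotonically to $\approx 0$ by $p_{k+1}-1$, so $\|\bx\|_{\bp}\lesssim 1$. (b) Clearly $\|G_N\bx\|_{\bp}=H_N\approx\ln N$ and $\|G_N\bx\|_{\ell^{1,r}}=H_N^{1/r}\approx(\ln N)^{1/r}$. (c) The magnitudes of $\bx$ rearrange as $x_j^*=1/j$ for $j\leq N$ and $x_{N+i}^*=1/((N+1)i)$ for $1\leq i\leq J_N$, giving
\[
 \|\bx\|_{\ell^{1,r}}^r\;=\;H_N\,+\,(N+1)^{-r}\sum_{i=1}^{J_N}\frac{(N+i)^{r-1}}{i^r}.
\]
Splitting the inner sum at $i=N$ and bounding $(N+i)^{r-1}/i^r$ by $(2N)^{r-1}/i^r$ for $i\leq N$ and by $2^{r-1}/i$ for $i>N$, one finds the second term is $O(N^{-1})+O(H_N/N^{r-1})=o(1)$ when $r>1$, so $\|\bx\|_{\ell^{1,r}}\approx(\ln N)^{1/r}$. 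Combining (a)--(c): $\vertiii{\bx}\approx(\ln N)^{1/r}$, $\vertiii{G_N\bx}\approx\ln N$, whence $g_N\geq\vertiii{G_N\bx}/\vertiii{\bx}\gtrsim(\ln N)^{1/r'}$.

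The hardest step is estimate (c): the construction contains superexponentially many cancellation terms, and the harmonic profile $\delta_j^*=1/((N+1)j)$ is chosen precisely because it extremises the Hölder comparison between $\ell^1$-like sums and the $\ell^{1,r}$-norm, making the cancellation mass contribute only $o((\ln N)^{1/r})$ to $\|\bx\|_{\ell^{1,r}}$. Uniform cancellations of size $1/N$ would instead inflate $\|\bx\|_{\ell^{1,r}}$ to $\Theta(\ln N)$ and collapse the ratio to $O(1)$; this is the pitfall the construction must avoid.
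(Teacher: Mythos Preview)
Your argument is correct, but it takes a genuinely different route from the paper's. The paper uses a \emph{dyadic} construction: for $n\geq1$ it sets
\[
\bx=\Big(1,\underbrace{-\tfrac1{2^n},\dots,-\tfrac1{2^n}}_{2^n},\tfrac12,\tfrac12,\underbrace{-\tfrac1{2^{n+1}},\dots}_{2^{n+1}},\dots,\underbrace{\tfrac1{2^n},\dots,\tfrac1{2^n}}_{2^n},\underbrace{-\tfrac1{2^{2n}},\dots}_{2^{2n}},0,\dots\Big),
\]
so that each positive block of mass $1$ is cancelled by a following negative block of the same mass, giving $\|\bx\|_{\bp}=1$ immediately. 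The decreasing rearrangement is then essentially $2^j$ copies of $1/2^j$ for $j=0,\dots,2n$, which makes the $\ell^{1,r}$ computation one line: $\|\bx\|_{\ell^{1,r}}\approx (2n+1)^{1/r}$. Taking $N=2^{n+1}-1$ and the greedy operator that selects the positive coefficients yields $\|G_N\bx\|_{\bp}=n+1$, and the ratio $(n+1)/(2n+1)^{1/r}\approx(\ln N)^{1/r'}$ follows.

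Your \emph{harmonic} construction (single spikes $1/k$ with a harmonic cancellation tail $1/((N+1)j)$) is more elaborate: it requires the H\"older-type splitting in step (c) to control the tail contribution to the $\ell^{1,r}$ norm, and the choice of $J_k$ via $H_{J_k}\approx(N+1)H_k$ forces a superexponential support. What you gain is that the construction works directly for every large $N$ (not only $N=2^{n+1}-1$), the greedy set is unique (no ties at the threshold level, unlike the paper where magnitude $1/2^n$ appears both positively and negatively), and your closing remark about why uniform cancellations of size $1/N$ would fail is a nice structural observation that the paper does not make explicit. The paper's version, on the other hand, is considerably shorter and requires no asymptotic estimates at all. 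One small point: your displayed formula for $\|\bx\|_{\ell^{1,r}}^r$ should be interpreted via the usual $\sup$ convention when $r=\infty$; the verification in that case is even easier (both the spikes and the first cancellation term give $jx_j^*=1$).
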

\begin{proof}
For fixed $n\geq1$, consider
$$\bx=\Big(1,\underbrace{-\frac{1}{2^n},...,-\frac{1}{2^n}}_{2^n\text{elements}}, \frac{1}{2}, \frac{1}{2}, \underbrace{-\frac{1}{2^{n+1}},...,-\frac{1}{2^{n+1}}}_{2^{n+1} \text{elements}},...,\frac{1}{2^n},...,\frac{1}{2^n},\underbrace{-\frac{1}{2^{2n}},...,-\frac{1}{2^{2n}}}_{2^{2n} \text{elements}},0,\ldots\Big).$$
Then $\Vert \bx\Vert_{b_1} = 1$, and since the decreasing rearrangement of $\bx$ is given by
$$ \Big(1, \frac{1}{2},\frac{1}{2}, \frac{1}{4},\frac{1}{4}, \frac{1}{4},\frac{1}{4},...,\frac{1}{2^{2n}},..., \frac{1}{2^{2n}},0,\ldots\Big),$$
we also have $\|\bx\|_{\ell^{1,r}}\approx[\sum_{j=0}^{2n}(2^jx^*_{2^j})^r]^{1/r}= [2n+1]^{1/r}\approx\tri{\bx}$.

Now, if $N=1+2+...+2^n = 2^{n+1}-1$, then
$$G_N(\bx)=\Big(1,0,...0,\frac{1}{2},\frac{1}{2},0,...,0,...,\frac{1}{2^n},...,\frac{1}{2^n}\Big),$$
and therefore $\Vert G_N(\bx)\Vert_{b_1} = n+1$. Hence, $\vertiii{G_N(\bx)}\geq n+1 =\log_2(N+1)$, and we conclude
\[
g_N\,\geq\, \tri{G_N(\bx)}/\tri{\bx}\,\gtrsim\,(n+1)^{1/r'}\approx [\ln(N+1)]^{1/r'}.
\]
\end{proof}

\Proofof{Theorem \ref{th_KT}}
From Lemmas \ref{etaxi} and \ref{DN_KTpr} one easily obtains that
\Be
\OT_N(D,D^*)\approx \left\{\Ba{ll}\ln (N+1) & \mbox{ if $1<p<\infty$}\cr
 [\ln (N+1)]^{1/r'} & \mbox{ if $p=1$}.\Ea\right.
\label{OTN_KT}\Ee

\medskip

%
%
%
%
%

When $p=1$, this and Theorem \ref{Th3} give all the upper bounds asserted in \eqref{LNKTpii}.
Since $\min\{\LN,\tLN,K_N\}\gtrsim g_N$ and $\min\{\LN^*,\tLN^*\}\geq \mu^*_N$, the lower bounds
follow from Lemmas \ref{DN_KTpr} and \ref{qg3}.

When $1<p<\infty$, observe from Lemmas  \ref{DN_KTpr} and \ref{qg1} that $\cB$ is quasi-greedy and bidemocratic,
hence also $\cB^*$ must be quasi-greedy, by \cite[Theorem 5.4]{DKKT}.
By \cite[Theorem 3.3]{DKKT}, $\tLN\approx \tLN^*\approx 1$,
as asserted in \eqref{LNKTp}. Also $\LN\approx \LN^*\approx K_N$, by \cite[Theorem 1.1]{GHO2013}, and hence the lower bounds on the left side of \eqref{LNKTp} follow from Lemma \ref{qg2}. It remains to give an upper bound for $K_N$. This time \eqref{OTN_KT} would only be optimal for $r=\infty$. However, for $r<\infty$ we can do slightly better using the fact that $KT(p,r)\hookrightarrow \ell^{p,r}$. Indeed, going back to \eqref{first} in the proof of Theorem \ref{Th3}, first notice that we can choose the sequence $\eta_1(N)=\sum_{j=1}^N1/j^{1/p'}$ because of \eqref{KTD_aux1}.  Then
\Beas
\tri{P_A(\bx)} & \leq & \sum_{j=1}^N a^*_j(x)\Dt\eta_1(j)= \sum_{j=1}^{N} x_j^*\frac{j^{1/p}}{j} \\
&\leq& \Big(\sum_{j=1}^{N} (x_j^*j^{1/p})^r \frac{1}{j}\Big)^{1/r} \Big(\sum_{j=1}^{N}\frac{1}{j}\Big)^{1/r'}\,
\,\leq\, \vertiii{\bx}\, [\ln(N+1)]^{1/r'}.
\end{eqnarray*}
This gives a direct bound $K_N \leq [\ln(N+1)]^{1/r'}$, and completes the proof of the theorem.
\vskip0.3cm
\ProofEnd

\medskip

\bibliographystyle{plain}

\vskip 1truemm

\end{document}